\newcommand{\btkz}{\begin{tikzpicture}}
\newcommand{\etkz}{\end{tikzpicture}}
\newcommand{\brk}[1]{\left(#1\right)}          
\newcommand{\Brk}[1]{\left[#1\right]}          
\newcommand{\BRK}[1]{\left\{#1\right\}}        
\newcommand{\Norm}[1]{\left\| #1 \right\|}     
\newcommand{\Cases}[1]{\begin{cases} #1 \end{cases}}
\newcommand{\deriv}[2]{\frac{d#1}{d#2}}
\newcommand{\secref}[1]{Section~\ref{#1}}
\newcommand{\thmref}[1]{Theorem~\ref{#1}}
\newcommand{\propref}[1]{Proposition~\ref{#1}}
\newcommand{\lemref}[1]{Lemma~\ref{#1}}
\newcommand{\corrref}[1]{Corollary~\ref{#1}}
\newcommand{\beq}{\begin{equation}}
\newcommand{\eeq}{\end{equation}}
\newcommand{\bsplit}{\begin{split}}
\newcommand{\esplit}{\end{split}}
\newcommand{\baligned}{\begin{aligned}}
\newcommand{\ealigned}{\end{aligned}}
\providecommand{\e}{\varepsilon}
\providecommand{\half}{\frac{1}{2}}
\providecommand{\R}{\bbR}
\newcommand{\smallhalf}{\tfrac{1}{2}}
\newcommand{\textand}{\quad\text{ and }\quad}
\newcommand{\Textand}{\qquad\text{ and }\qquad}
\providecommand{\vp}{\varphi}
\newcommand{\End}{{\operatorname{End}}}
\newcommand{\id}{{\operatorname{Id}}}
\newcommand{\image}{{\operatorname{Image}}}
\newcommand{\trace}{{\operatorname{Tr}}}
\newcommand{\calA}{{\mathcal A}}
\newcommand{\calC}{{\mathcal C}}
\newcommand{\calD}{{\mathcal D}}
\newcommand{\calE}{{\mathcal E}}
\newcommand{\calL}{{\mathcal L}}
\newcommand{\calM}{{\mathcal M}}
\newcommand{\calR}{{\mathcal R}}
\newcommand{\calS}{{\mathcal S}}
\newcommand{\frakB}{\mathfrak{B}}
\newcommand{\frakF}{\mathfrak{F}}
\newcommand{\frakG}{\mathfrak{G}}
\newcommand{\frakT}{\mathfrak{T}}
\newcommand{\frakX}{\mathfrak{X}}
\newcommand{\frake}{\mathfrak{e}}
\newcommand{\frakg}{\mathfrak{g}}
\newcommand{\frakh}{\mathfrak{h}}
\newcommand{\frakn}{\mathfrak{n}}
\newcommand{\frakt}{\mathfrak{t}}
\newcommand{\bbK}{{\mathbb K}}
\newcommand{\bbP}{{\mathbb P}}
\newcommand{\bbR}{{\mathbb R}}
\newtheorem{theorem}{Theorem}[section]
\newtheorem{lemma}[theorem]{Lemma}
\newtheorem{proposition}[theorem]{Proposition}
\newtheorem{corollary}[theorem]{Corollary}
\newenvironment{proof}{{\flushleft \emph{Proof}:}}{\hfill\ding{110}}
\newcommand{\M}{\calM}
\newcommand{\dM}{\partial\M}
\newcommand{\VF}{\frakX}
\newcommand{\bra}{\langle}
\newcommand{\ket}{\rangle}
\newcommand{\D}{\calD}
\newcommand{\ortframe}{\lbrace E_j\rbrace_{j=1}^{d}}
\newcommand{\g}{\frakg}
\newcommand{\G}{\frakG}
\newcommand{\gEps}{{\g_\e}}
\newcommand{\h}{\frakh}
\newcommand{\euc}{\frake}
\renewcommand{\S}{\calS}
\newcommand{\Ptt}{\bbP_\e^{\frakt\frakt}}
\newcommand{\Ptn}{\bbP_\e^{\frakt\frakn}}
\newcommand{\Pnn}{\bbP_\e^{\frakn\frakn}}
\newcommand{\fTN}{f^*T\bar{\M}}
\newcommand{\fh}{f^*\bar{\g}}
\newcommand{\nabfh}{\nabla^{f^*\bar{\g}}}
\newcommand{\delfh}{\delta^{\nabfh}}
\newcommand{\Pnfh}{\bbP^{\frakn}_{\fh}}
\newcommand{\gD}{{\g_0}}
\newcommand{\PnD}{\bbP^{\frakn}}
\newcommand{\PttD}{\bbP^{\frakt\frakt}}
\newcommand{\PtnD}{\bbP^{\frakt\frakn}}
\newcommand{\PntD}{\bbP^{\frakn\frakt}}
\newcommand{\PnnD}{\bbP^{\frakn\frakn}}
\newcommand{\nabg}{\nabla^\g}
\newcommand{\nabgEps}{\nabla^{\gEps}}
\newcommand{\nabgD}{\nabla^{\gD}}
\newcommand{\dr}{\partial_r}
\newcommand{\starG}{\star_\g}
\newcommand{\TT}{{\mathrm{TT}}}
\newcommand{\NN}{{\mathrm{NN}}}
\newcommand{\Hg}{H_\g}
\newcommand{\bHg}{\mathbf{H}_\g}
\newcommand{\Fg}{F_\g}
\newcommand{\bFg}{\mathbf{F}_\g}
\newcommand{\Bg}{\mathbf{B}_\g}
\renewcommand{\div}{\operatorname{Div}}
\newcommand{\curl}{\operatorname{Curl}}
\renewcommand{\trace}{\operatorname{tr}}
\renewcommand{\image}{\operatorname{Im}}
\newcommand{\Rm}{\operatorname{Rm}}
\newcommand{\Hess}{\operatorname{Hess}}
\newcommand{\EE}{\mathcal{EE}}
\newcommand{\EC}{\mathcal{EC}}
\newcommand{\CE}{\mathcal{CE}}
\newcommand{\CC}{\mathcal{CC}}
\newcommand{\BH}{\mathcal{BH}}
\newcommand{\dg}{d^{\nabg}}
\newcommand{\deltag}{\delta^{\nabg}}
\newcommand{\dgEps}{d^{\nabgEps}}
\newcommand{\dgD}{d^{\nabgD}}
\newcommand{\deltagD}{\delta^{\nabgD}}
\newcommand{\dgV}{d^{\nabg}_V}
\newcommand{\deltagV}{\delta^{\nabg}_V}
\newcommand{\dgEpsV}{d^{\nabgEps}_V}
\newcommand{\dgDV}{d^{\nabgD}_V}
\newcommand{\deltagDV}{\delta^{\nabgD}_V}
\newcommand{\EwR}[1]{\stackrel{\eqref{#1}}{=}}
\newcommand{\VectorFormsM}{\Omega^{*,*}(\M)}
\newcommand{\VectorFormsKM}{\Omega^{k,m}(\M)}
\newcommand{\derivAtZero}{\left.\deriv{}{t}\right|_0}
\newcommand{\Volume}{\text{Vol}}
\newcommand{\VolumeG}{d\Volume_\g}
\newcommand{\VolumeD}{d\Volume_{\gD}}
\newcommand*\owedge{\mathpalette\@owedge\relax}
\newcommand*\@owedge[1]{%
  \mathbin{%
    \ooalign{%
      $#1\m@th\bigcirc$\cr
      \hidewidth$#1\m@th\wedge$\hidewidth\cr
    }%
  }%
}
\numberwithin{equation}{section}
\begin{document}

\title{
On Saint-Venant compatibility and stress potentials in manifolds with boundary and constant sectional curvature
}

\author{
Raz Kupferman and
Roee Leder  \footnote{
raz@math.huji.ac.il,
roee.leder@mail.huji.ac.il
}
\\
\\
Institute of Mathematics \\
The Hebrew University \\
Jerusalem 9190401 Israel
}
\maketitle

\begin{abstract}
We address three related problems in the theory of elasticity, formulated in the framework of double forms: the Saint-Venant compatibility condition, the existence and uniqueness of solutions for equations arising in incompatible elasticity, and the existence of stress potentials. The scope of this work is for manifolds with boundary of arbitrary dimension, having constant sectional curvature. 
The central analytical machinery is the regular ellipticity of a boundary-value problem for a bilaplacian operator, and its consequences, which were developed in \cite{KL21a}. One of the novelties of this work is that stress potentials can be used in non-Euclidean geometries, and that the gauge freedom can be exploited to obtain a generalization for the biharmonic equation for the stress potential in dimensions greater than two. 

\end{abstract}

\section{Introduction}
\label{sec:intro}

This article is concerned with generalizations of three classical problems arising in the theory of elasticity. Let $\D\subset\R^d$ be an open, bounded domain having a smooth boundary. Then:
\begin{enumerate}[itemsep=0pt,label=(\alph*)]
\item {\bfseries The Saint-Venant problem}: Given a symmetric $(2,0)$-tensor $\sigma:\D\to\R^{d\times d}$, what are necessary and sufficient conditions for it to be the symmetric gradient of a vector field $u:\D\to\R^d$,
\[
\sigma_{ij} = \partial_i u_j + \partial_j u_i. 
\]
\item {\bfseries Linearized stress equations}: 
Under what conditions there exists a solution to the linear boundary-value problem:
\[
\div\sigma=0
\qquad
\curl\curl\sigma = \calR
\textand
\sigma\cdot\frakn = 0,
\]
where
\[
\begin{gathered}
(\div\sigma)_i = \sum_{j=1}^d \partial_j\sigma_{ij}  \\
(\curl\curl\sigma)_{ijkl} = \partial_{ik}\sigma_{jl} - \partial_{jk}\sigma_{il}  - \partial_{il}\sigma_{jk}  + \partial_{jl}\sigma_{ik}  \\
(\sigma\cdot\frakn)_i =  \sigma_{ij} \frakn^j,
\end{gathered}
\]
$\frakn$ is the unit normal to the boundary and $\calR$ is a prescribed $(4,0)$-tensor. Moreover, under what conditions in the solution unique?
\item {\bfseries Representation of stresses by stress potentials}:
For $\sigma:\D\to\R^{d\times d}$ satisfying $\div\sigma=0$,  find a $(4,0)$-tensor $\psi$, such that
\[
\sigma = \div\div\psi,
\]
where
\[
(\div\div\psi)_{ij} = \sum_{k,l=1}^d \partial_{kl} \psi_{ikjl}.
\]
Moreover, what is the inherent gauge freedom in the choice of $\psi$?
\end{enumerate}

We consider generalizations of all three problem to the realm of Riemannian manifolds $(\M,\g)$ having constant sectional curvature $\kappa\in\R$, i.e., the $(4,0)$-Riemann curvature tensor $\Rm_\g$ can be represented in local coordinates as
\[
(\Rm_\g)_{ijkl} = \kappa \brk{\g_{ik}\g_{jl} - \g_{jk}\g_{il}}, 
\] 
under general topologies and Sobolev regularity. 

The motivation for considering those problems in a Riemannian setting comes from the theory of incompatible elasticity (also known as anelasticity \cite{Eck48,Kon49}, or non-Euclidean elasticity \cite{ESK09a}), a theory first introduced in the 1950s to model pre-stressed materials.  Mathematically, a pre-stressed material is modeled as a $d$-dimensional Riemannian manifold $(\M,\g)$, where the metric $\g$ encodes the infinitesimal rest lengths  between adjacent material points. For each configuration $f:\M\to\R^d$ corresponds an elastic energy, penalizing for metric deformations. If $(\M,\g)$ is not Euclidean, then the infimal energy over all configurations is non-zero, hence the body is stressed even at equilibrium.  

\paragraph{The Saint-Venant problem}
Denote by $\Theta^1(\M)$ the space of symmetric $(2,0)$-tensors on $\M$. The Saint-Venant problem is the following: what are necessary and sufficient conditions for $\sigma\in\Theta^1(\M)$ to be a Lie derivative of the metric, namely, in local coordinates,
\[
\sigma_{ij} = \nabg_i\omega_j + \nabg_j\omega_i \equiv (\calL_Y\g)_{ij}, 
\]
where $\omega$ is a 1-form, $Y = \omega^\#$ is the corresponding vector field, $Y^i = \g^{ij}\omega_j$, and $\nabg$ is the (Levi-Civita) covariant derivative.

The importance of this problem goes beyond the theory of elasticity: it was first recognized by Berger and Ebin \cite{BE69} that the image of the Lie derivative operator, $X\mapsto\calL_X\g$, is a component of the decomposition of symmetric tensor fields, and whose orthogonal component is the kernel of the divergence operator for tensors. This kernel appears abundantly throughout mechanics.  

The Saint-Venant problem itself was considered by several authors under different assumptions on topology, geometry and regularity. In a series of works, Ciarlet, Geymonat and co-workers (\cite{CCGK07,GK09} and references therein) addressed this question for 
three-dimensional Euclidean domains under $L^2$-regularity and Lipschitz boundary. Yavari and Angoshtari (\cite{Yav13,YA16} and references therein) show how similar results (in a flat setting) can be obtained using the Hodge decomposition for scalar differential forms \cite{Sch95b}. Calabi \cite{Cal61} provided an answer in the smooth category for closed, simply-connected manifolds having constant sectional curvature. Gasqui and Goldschmidt \cite{GG88} improved Calabi's result by generalizing to closed, simply-connected symmetric spaces.  

The smooth version of our first theorem is:

\begin{quote}
{
Let $(\M,\g)$ have constant sectional curvature. Then $\sigma\in\Theta^1(\M)$ satisfies
\[
\sigma = \calL_Y\g
\]
for some vector field $Y$ in $\M$,
if and only if
\[
\bHg\sigma = 0
\Textand
\sigma \perp_{L^2} \calS\BH_\NN^1(\M),
\]
where in local coordinates,
\beq
\begin{split}
(\bHg\sigma)_{ijkl} &= \brk{\nabg_{ik}\sigma_{jl} - \nabg_{jk}\sigma_{il}  - \nabg_{il}\sigma_{jk}  + \nabg_{jl}\sigma_{ik}} \\
&- \kappa\brk{\g_{ik}\sigma_{jl} - \g_{jk}\sigma_{il}  - \g_{il}\sigma_{jk}  + \g_{jl}\sigma_{ik}}
\end{split}
\label{eq:explicit_H_constant}
\eeq
is a generalization of the curl-curl operator, and $\calS\BH_\NN^1(\M)\subset \Theta^1(\M)$ is a finite-dimensional  module of smooth sections, which will be defined in the next section. If $\M$ is simply-connected (as in the settings of \cite{Cal61} and \cite{GG88}), then $ \calS\BH_\NN^1(\M) = \{0\}$.
}
\end{quote}

A similar theorem for three-dimensional Euclidean domains and arbitrary topologies was proved in \cite{GK09}. The space $\calS\BH_\NN^1(\M)$ (denoted by $\bbK$) was however not recognized as finite-dimensional. 

\paragraph{Linearized stress equations}
Our second result addresses boundary-value problems of the form
\beq
\begin{gathered}
\deltag\sigma = 0
\qquad  
\bHg\sigma=\calR 
\qquad
\text{in $\M$} \\
i_{\frakn} \sigma = \tau
\qquad
\text{on $\dM$}.
\end{gathered}
\label{eq:intro_BVP}
\eeq
The operator $\deltag$ is a covariant divergence, which
in a local orthonormal frame takes the form 
\[
(\deltag\sigma)_i = -\sum_{i=1}^d \nabg_j \sigma_{ij}.
\]
The source term $\calR$ is an algebraic curvature \cite{Lee18}, namely a $(4,0)$-tensor satisfying the symmetries pertinent to curvature tensors, which in local coordinates are
\[
\calR_{ijkl} = - \calR_{jikl} =  \calR_{klij}
\Textand
\calR_{ijkl} + \calR_{iklj} + \calR_{iljk} = 0.
\]
The operator $i_\frakn$ is the contraction with the normal to the boundary, $(i_{\frakn} \sigma)_i = \sigma_{ij} \frakn^j$.
The boundary source term $\tau$ is a 1-form restricted to the boundary. 
Such systems arise in linearized theories of elasticity (i.e., in the small-strain limit), with $\sigma$ being the stress tensor;  see \cite{Gur72,Yav13,YA16} in locally-Euclidean setting and \cite{ESK09a,MSK14} in a Riemannian setting. 

We prove the following existence and uniqueness result:

\begin{quote}
{
Let $(\M,\g)$ have constant sectional curvature. Consider
the space of smooth Killing 1-forms,
\[
K(\M)=\BRK{\omega\in\Omega^1(\M)~:~\calL_{\omega^{\sharp}}\g=0}.
\]
The boundary-value problem \eqref{eq:intro_BVP} is solvable  if and only if
\[
\calR\in\image\bHg,
\]
and
\[
\int_{\dM} (\tau,\omega)_\g\, \VolumeD=0
\qquad 
\forall\omega\in K(\M),
\]
where $\gD$ is the induced metric of the boundary and $\VolumeD$ is the corresponding area form.
The solution to \eqref{eq:intro_BVP} is unique up to an element in the finite-dimensional space $\calS\BH^1_{\NN}(\M)$. 
}
\end{quote}
As we show, the condition $\calR\in\image{\bHg}$ rises naturally in applications: most prominently, the Riemannian curvature tensor $\Rm_\g$ satisfies this condition. If $\calR$ and $\tau$ are Sobolev sections, then the solution inherits the regularity with appropriate estimates on the Sobolev norm of $\sigma$. We note that $K(\M)$ is finite-dimensional, and every weak killing field is in fact smooth as a result of Korn's inequality \cite[Ch.~5.12]{Tay11a}.
We further observe that if \eqref{eq:intro_BVP} is solvable, then $\sigma$ is a solution of a regular elliptic system,
constituting a generalization of the biharmonic equations for the stress/strain field in classical elasticity \cite[p.~133]{Gur72}, supplemented by a complete set of boundary conditions and a uniqueness clause. 

\paragraph{Representation of stresses by stress potentials}
Our third result addresses the existence of stress potentials: suppose that  $\sigma\in\Theta^1(\M)$ satisfies
\[
\deltag\sigma=0.
\]
If $(\M,\g)$ is a simply-connected Euclidean domain, then $\sigma_{ij} =-\sum_{k=1}^d \partial_k Q_{ikj}$ for a $(3,0)$-tensor $Q$, anti-symmetric in its first two indices. A classical procedure in elasticity hinges on the observation  that one can further choose $Q$ such that $\sum_{j=1}^d \partial_j Q_{ikj}=0$, and deduce the existence of a stress potential $\psi$, which is an algebraic curvature, such that $Q_{ikj} = -\sum_{l=1}^d \partial_l \psi_{ikjl}$ \cite{Tru59, Gur72, GK06, CCGK07, Yav13, YA16} (the applications are restricted to dimensions 2,3, however the existence of a potential holds in any dimension). Thus,
\[
\sigma_{ij} = \sum_{k,l=1}^d \partial_{kl} \psi_{ikjl}.
\]
The choice of a potential is non-unique, calling for a choice of gauge \cite{Max70,Mor92,Pom15}. 

The smooth version of our theorem is:

\begin{quote}
{
Let $(\M,\g)$ have constant sectional curvature and 
let $\sigma\in\Theta^1(\M)$ satisfy $\deltag\sigma=0$. 
Suppose that there exists an algebraic curvature $\eta$, satisfying
\[
\begin{gathered}
\sigma - \bHg^*\eta \perp \calS\BH^1_\NN(\M) 
\Textand
i_{\frakn}(\sigma - \bHg^*\eta) = 0,
\end{gathered} 
\]
where $\bHg^*$ is the $L^2$-dual of $\bHg$, and the orthogonality condition is with respect to the $L^2$ inner-product.
Then, there exists an algebraic curvature $\psi$ satisfying
\[
\bHg^*\psi=\sigma.
\]
}
\end{quote}

If $\sigma$ and $\eta$ are Sobolev sections, then $\psi$ inherits the regularity with appropriate estimates on its Sobolev norm. If $i_{\frakn}\sigma=0$ and $\sigma$ is orthogonal to $\S\BH^1_{\NN}(\M)$, then the conditions are satisfied trivially for $\eta=0$.  
We further identify gauge and boundary conditions that can be imposed on $\psi$.

All three problems are solved using an elliptic theory  \cite{KL21a} pertinent to a class of vector-valued forms known as double forms \cite{deR84,Cal61,Gra70,Kul72}. Supplementing $\bHg$ and $\bHg^*$ with another second-order operator $\bFg$ and its dual $\bFg^*$, we define a fourth-order differential operator, $\Bg:\Theta^1(\M)\to\Theta^1(\M)$,  defined by
\[
\Bg=\bHg^*\bHg+\bHg\bHg^*+ \bFg^*\bFg +\bFg\bFg^*,
\]
reminiscent of how the exterior derivative and its dual give rise to the Hodge
laplacian in the classical theory of scalar differential forms \cite{Tay11a,Sch95b}. 
We prove the regular ellipticity of $\Bg$ under several sets of boundary conditions. This
elliptic theory manifests analogies to the theory of the Hodge laplacian, but cannot be derived
from it. In particular, under certain exactness conditions, which are satisfied in the case of manifolds having constant sectional curvature, this elliptic theory leads to a symmetries-preserving decomposition of double forms, and in particular of $\Theta^1(\M)$.

The curl-curl operator and its duals have been studied quite extensively (e.g., \cite{Cal61, Gur72, GG88, CCGK07, GK09}). 
To the best of our knowledge, earlier works do not recognize these operators as the progenitors of an elliptic theory of a bilaplacian operator. In the Euclidean setting, and in two and three dimensions, the primary analytical apparatus used in the study of partial differential systems featuring the curl-curl operator is a double iteration of first order methods \cite{Gur72, CCGK07, GK09, Yav13, YA16}. Such methods are classically designed for vector fields and scalar differential forms, and the reason they extend for symmetric tensor fields relies heavily on the fact that the space in question is Euclidean. Thus, the most immediate shortcoming of this approach is that it is not suitable in non-Euclidean settings. Other shortcomings, even in Euclidean settings, concern the ability to impose full boundary conditions, and  exploiting gauge freedom. In this paper, we demonstrate how the elliptic theory of bilaplacians developed in \cite{KL21a} resolves these matters.

The current work is limited to manifolds having constant sectional curvature. The generalized curl-curl operator $\bHg$ has a natural generalization to arbitrary Riemannian manifolds, given in a local orthonormal frame by
\beq
\begin{split}
(\bHg\sigma)_{ijkl} &= \half \brk{\nabg_{ik}\sigma_{jl} - \nabg_{jk}\sigma_{il}  - \nabg_{il}\sigma_{jk}  + \nabg_{jl}\sigma_{ik}} \\
&+ \half \brk{\nabg_{ki}\sigma_{jl} - \nabg_{kj}\sigma_{il}  - \nabg_{ji}\sigma_{jk}  + \nabg_{lj}\sigma_{ik}} \\
&- \half \sum_{s=1}^d \brk{(\Rm_\g)_{sijk}\sigma_{sl} - (\Rm_\g)_{sijl}\sigma_{sk} - (\Rm_\g)_{sjik}\sigma_{sl} + (\Rm_\g)_{skli}\sigma_{sj}}.
\end{split}
\label{eq:explicit_H_general}
\eeq
This operator, however, does not annihilate Lie derivatives of the metric, which is a key property in the aforementioned exactness conditions. In locally-symmetric spaces, $\bHg$ only annihilates Hessians of scalar functions, which form a subclass of Lie derivatives of the metric. 
Note that \cite{GG88}, solves the Saint-Venant problem for a compact, simply-connected symmetric space without boundary. 
In this work, $\bHg$ is replaced by an operator $\alpha\circ\bHg$, which annihilates Lie derivatives in locally-symmetric spaces, where $\alpha$ in a smooth bundle map from the space of algebraic curvatures to itself. The construction of $\alpha$ is specific  to the homogeneous structure, and more importantly, it is not clear whether $\alpha\circ\bHg$ differs from $\bHg$ by a lower-order differential operator, which is required in order to fit into our elliptic theory.

\paragraph{The structure of this paper}
In Section~2 we survey double forms along with the main results obtained in \cite{KL21a}.
To keep the survey as concise as possible, we state all formulas and definitions without reference to regularity. Detailed constructions, proofs and further references are found in \cite{KL21a}.
In Section~3, we construct the differential operators $\bHg$, $\bFg$ and their duals for general manifolds, and show they possess a particular set of properties in the case of manifolds having constant sectional curvature.
In Section~4 we address the Saint Venant problem. 
In Section~5 we  present the equations of incompatible elasticity, which to a large extent are the motivation to this work, and address the existence and uniqueness of solutions.
In Section~6 we address the existence of stress potentials.


\paragraph{Acknowledgments}
We thank Cy Maor for pointing out the relation between the existence of stress potentials and the Saint-Venant problem. We thank Cy Maor and Asaf Shachar for their comments on the manuscript. We thank the anonymous reviewers for many valuable suggestions, and for helping us to improve the readability of this article.
This research was partially supported by the Israel Science Foundation Grant No. 1035/17.

\section{Double forms}

Let $(\M,\g)$ be a Riemannian manifold.
We consider the spaces of sections, 
\[
\VectorFormsKM = \Omega^k(\M;\Lambda^{m}T^*\M)= \Gamma(\Lambda^{k,m}T^*\M),
\]
known as double forms, or  $(k,m)$-forms,
where
\[
\Lambda^{k,m}T^*\M=\Lambda^kT^*\M\otimes\Lambda^{m}T^*\M.
\]
Double forms are differential forms taking values in spaces of differential forms.
$(k,m)$-forms differ from $(k,m)$-tensors in two aspects: the $m$-part of a $(k,m)$-form is covariant rather than contravariant, and both $k$- and $m$-part of a $(k,m)$-form have alternating symmetry.
We will commonly refer to the $k$-part of a $(k,m)$-form as a the ``form part" and to the $m$-part as the ``vector part".
In local coordinates, a $(k,m)$-form has the following representation,
\[
\psi = \psi_{i_1,\dots,i_k; j_1,\dots,j_m} 
\brk{dx^{i_1} \wedge\dots \wedge dx^{i_k}}
\otimes
\brk{dx^{j_1} \wedge\dots \wedge dx^{j_m}},
\] 
with summation over increasing indices. 
Double forms along with corresponding algebraic and differential operators were introduced and addressed in 
 \cite{deR84,Cal61,Gra70,Kul72}. This section presents a concise survey, along with results obtained in 
\cite{KL21a}.

The vector bundle $\Lambda^{k,m}T^*\M$ has a natural graded wedge product,
\[
\wedge:\Lambda^{k,m}T^*\M\times\Lambda^{n,\ell}T^*\M\to \Lambda^{k+n,m+\ell}T^*\M,
\] 
defined by the linear extension of
\[
(\omega\otimes F)\wedge(\alpha\otimes Q)=(\omega\wedge\alpha)\otimes(F\wedge Q),
\]
turning $\Lambda^{*,*}T^*\M = \bigoplus_{k,m}\Lambda^{k,m}T^*\M$ into a graded algebra. 
The graded algebra of double forms \cite{deR84,Cal61,Gra70,Kul72} is defined as
\[
\VectorFormsM=\bigoplus_{k,m}\VectorFormsKM.
\]

Double forms have a natural tensorial involutive operation of flip, or transposition,
$(\cdot)^T:\VectorFormsKM\to \Omega^{m,k}(\M)$ defined by 
\[
\psi^T(Y_1,\dots, Y_m;X_1,\dots ,X_k)=\psi(X_1,\dots ,X_k;Y_1,\dots ,Y_m),
\]
where the semicolon separates between the arguments of the form and vector parts. A $(k,k)$-form $\psi$ satisfying $\psi^T=\psi$ is called symmetric. The space of symmetric forms is denoted by $\Theta^k(\M)$. 
Metrics, Ricci tensors and Hessians of scalar functions can be viewed as symmetric $(1,1)$-forms, whereas the $(4,0)$-versions of Riemannian curvature tensors can be viewed as symmetric $(2,2)$-forms. For a metric $\g$, the relation between the $(2,2)$-form $\Rm_\g$ and the Riemannian $(3,1)$-endomorphism $R_\g\in\Omega^2(\M;\End(T\M))$ is 
\[
\Rm_\g(X,Y;Z,W) = (R_\g(X,Y)Z,W)_\g.
\]
Another version of the curvature tensor is the curvature operator $\calR_\g\in\End(\Lambda^2(T\M))$ \cite[p.~83]{Pet16}, related to $\Rm_\g$ via
\[
(\calR_\g(X,Y),Z\wedge W)_\g = \Rm_\g(X,Y;W,Z).
\]

We denote by $W^{s,p}\Omega^{k,m}(\M)$ and $W^{s,p}\Theta^k(\M)$ the Sobolev versions of these spaces, for $s\in\mathbb{N}\cup\{0\}$ and $p\geq 2$ (defined by the completion of $\Omega^{k,m}(\M)$ with respect to the $W^{s,p}$ norms, which in turn are defined using covariant differentation). These are equipped with the $L^2$-inner-product
\[
\bra\psi,\eta\ket = \int_\M (\psi,\eta)_\g\,\VolumeG,
\]
where $\VolumeG$ is the Riemannian volume form.
 
Double forms, like any other vector-valued form, are equipped with a Hodge-dual isomorphism
$\starG:\Lambda^{k,m}T^*\M\rightarrow\Lambda^{d-k,m}T^*\M$, 
defined by the linear extension of
\[
\starG(\omega\otimes F) = \starG\omega \otimes F.
\]
For a tangent vector $v\in T\M$, the interior product $i_v:\Lambda^{k,m}T^*\M\to\Lambda^{k-1,m}T^*\M$ is defined by the linear extension
\[
i_v(\omega\otimes F) = i_v\omega \otimes F.
\]
Another algebraic operation on double forms is the Bianchi sum, which is the bundle map $\G:\Lambda^{k,m}T^*\M\rightarrow  \Lambda^{k+1,m-1}T^*\M$, given by
\[
\G\psi(X_1,...,X_{k+1};Y_1,...,Y_{m-1})=\sum_{j=1}^{k+1}(-1)^{j+1}\psi(X_1,...,\hat{X}_j,...,X_{k+1};X_j,Y_1,...,Y_{m-1}),
\]
where as usual, $\hat{X}_j$ denotes an omitted argument.
It is noteworthy that $\sigma\in\Omega^{1,1}(\M)$ is symmetric if and only if $\frakG\sigma=0$. 
Sections $\psi\in\Omega^{2,2}(\M)$ satisfying $\frakG\psi=0$ are called algebraic curvatures, and are in particular symmetric.
The first Bianchi identity satisfied by the Riemannian curvature tensor can be written as $\G\Rm_\g=0$.

Since the vector part of $\Lambda^{k,m} T^*\M$  is also an exterior algebra, all the operators acting on vector-valued forms can be defined on the vector part through transposition. 
The symbol ``V" will be used to denote operators acting on the exterior algebra of the vector part; 
explicitly,
\beq
\begin{gathered}
\starG^V \psi = (\starG \psi^T)^T
\qquad
i_X^V \psi = (i_X \psi^T)^T 
\Textand
\G_V\psi=(\G\psi^T)^T.
\end{gathered}
\label{eq:dual_ops}
\eeq
The Hodge-dual operators yield an isometry,
\[
\starG\starG^V:\Theta^k(\M)\rightarrow \Theta^{d-k}(\M),
\]
which restricts to a $W^{s,p}$-isometry for all $s\in\mathbb{N}\cup\{0\}$ and $p\geq 2$. 

Another tensorial operator acting on double forms is the metric contraction,
\[
\trace_\g: \Lambda^{k,m} T^*\M\to \Lambda^{k-1,m-1} T^*\M,
\]
which can be written in terms of an orthonormal frame $\ortframe$ of $T\M$,
\[
\trace_\g = \sum_{i=1}^d i_{E_i} i_{E_i}^V.
\]
Its metric dual is $\g\wedge : \Lambda^{k-1,m-1} T^*\M\to \Lambda^{k,m} T^*\M$.

We next consider first-order differential operators.
The covariant  exterior derivative
\[
\dg:\Omega^{k,m}(\M)\to\Omega^{k+1,m}(\M)
\]
is an $\R$-linear graded operator, defined by the linear extension of \cite[pp.~60, 362]{Pet16}
\[
\dg(\omega\otimes F) = d\omega\otimes F + (-1)^k \omega\wedge \nabg F,
\]
where $\nabg$ is the Riemannian connection and $\nabg F$ is viewed here as a $(1,m)$-form.  In a covariant notation, 
\[
\dg\psi(X_{1},\dots,X_{k+1};Y_{1},\dots,Y_{m})=\sum_{i=1}^{k+1}(-1)^{k+1}\nabg_{X_{i}}\psi(X_{1},\dots,\hat{X}_{i}\dots,X_{k+1};Y_{1},\dots Y_{m}).
\]
Unlike the exterior derivative, $\dg\dg$ is in general not zero, and is related to the curvature endomorphism of $R_\g^* \in\Omega^2(\M;\End(\Lambda^m T^*\M))$, which is related in turn to $R_\g$ via the Ricci identity \cite[p.~205]{Lee18}. Since the metric $\g$ is parallel, $\dg\g=0$; moreover, the second Bianchi identity reads $\dg\Rm_\g=0$.

The $L^2$-dual of $\dg$ \cite{Kul72} is denoted by 
\[
\deltag: \Omega^{k+1,m}(\M)\to\Omega^{k,m}(\M).
\] 
For an orthonormal frame $\ortframe$ of $T\M$, 
\[
\deltag = - \sum_{i=1}^d i_{E_i}\nabg_{E_i}.
\]
The operators $\dg$ and $\deltag$ can be viewed as generalized ``curl" and ``div" operators, acting on double forms.
The vector counterparts of the first-order operators are denoted by 
\[
\dgV\psi = (\dg\psi^T)^T
\Textand
\deltagV\psi = (\deltag\psi^T)^T.
\]

For a scalar function $f\in \Omega^{0,0}(\M)$,
\[
\dg\dgV f=\dgV\dg f=\nabg (df)^T=\Hess_\g f\in \Omega^{1,1}(\M).
\]
For general values of $k$ and $m$, $\dg$ and $\dgV$ do not commute. For example, let $f\in \Omega^{0,0}(\M)$ be a scalar function, then
\[
\dgV\dg df = 0,
\]
whereas
\[
\dg\dgV df = R_\g^*\circ (df)^T.
\]

We next introduce second-order differential operators on double forms,
\[
\begin{aligned}
&\Hg : \VectorFormsKM\to\Omega^{k+1,m+1}(\M) 
&\qquad
&\Hg^* : \VectorFormsKM\to\Omega^{k-1,m-1}(\M) \\
&\Fg : \VectorFormsKM\to\Omega^{k+1,m-1}(\M)
&\qquad
&\Fg^* : \VectorFormsKM\to\Omega^{k-1,m+1}(\M) ,
\end{aligned}
\]
defined by
\[
\begin{aligned}
&\Hg = \smallhalf(\dgV\dg+\dg\dgV)                     
&\qquad
&\Hg^* = \smallhalf(\deltag\deltagV+\deltagV\deltag)   \\
&\Fg = \smallhalf(\dg\deltagV+\deltagV\dg)
&\qquad
&\Fg^* = \smallhalf(\dgV\deltag+\deltag\dgV).  
\end{aligned}
\]
As the notation suggests, $\Hg$ and $\Hg^*$, and $\Fg^*$ and $\Fg$ are mutually dual with respect to the $L^2$ inner-product. Moreover, for $\psi\in\Omega^{k,m}(\M)$,
\[
\begin{aligned}
\Hg^*\psi &= (-1)^{dk+dm}\starG \starG^V \Hg \starG \starG^V\psi \\
\Fg^*\psi &= (-1)^{dk+d+1} \starG \Hg \starG \psi \\
\Fg\psi &= (-1)^{dm+d+1} \starG^V \Hg \starG^V\psi.
\end{aligned}
\]
The operator $\Hg$ commutes with transposition, 
\[
(\Hg\psi^T)^T = \Hg\psi,
\]
and by duality, so does $\Hg^*$. 
On the other hand,
\[
(\Fg\psi^T)^T = \Fg^*\psi.
\]
The operators $\Hg$ and $\Hg^*$ can be viewed as generalized ``curl-curl" and ``div-div" operators, whereas $\Fg$ and $\Fg^*$ are mixed combinations of ``curl" and ``div". In Euclidean space $\Hg$ restricted to $(1,1)$-forms coincides with the $\curl\curl$ operator presented in the introduction. 

In this paper we consider restrictions of these operators to symmetric forms.  
Consider the diagram
\[
\begin{tikzcd}
&& {\Theta^{k+1}(\M)} \\
&& {} \\
{}&& {\Theta^{k}(\M)} && {\Omega^{k+1,k-1}(\M)} \\
&& {} \\
&& {\Theta^{k-1}(\M)}
\arrow["{\Hg}"', curve={height=12pt}, from=5-3, to=3-3]
\arrow["{\Hg}"', curve={height=12pt}, from=3-3, to=1-3]
\arrow["{\Hg^*}", curve={height=-12pt}, tail reversed, no head, from=3-3, to=1-3]
\arrow["{\Hg^*}"', curve={height=12pt}, from=3-3, to=5-3]
\arrow["{\tfrac12(\Fg^* + (\Fg^*(\cdot))^T)}", curve={height=-12pt}, from=3-5, to=3-3]
\arrow["{\Fg}", curve={height=-12pt}, from=3-3, to=3-5]
\end{tikzcd}
\]
In a locally-Euclidean setting, the second-order differential operators satisfy exactness conditions
\[
\begin{gathered}
\Hg\Hg=0 \qquad \Fg\Hg=0 \qquad \Hg(\Fg^*+(\Fg^*(\cdot))^T)=0 \\
\Hg^*\Hg^*=0 \qquad \Fg\Hg^*=0 \qquad \Hg^*(\Fg^*+\Fg^*(\cdot)^T)=0. 
\end{gathered}
\]
In a general Riemannian setting, these exactness conditions do not hold. In the next section, we show how to modify the second-order differential operators by tensorial terms, such to retain the exactness conditions in manifolds having constant sectional curvature.
 
Generally, assume graded tensorial operators 
\[
\begin{aligned}
& D_\g:\Lambda^{k,m}T^*\M\rightarrow \Lambda^{k+1,m+1}T^*\M \\ 
& S_\g:\Lambda^{k,m}T^*\M\rightarrow \Lambda^{k+1,m-1}T^*\M, 
\end{aligned}
\]
along with their metric duals
\[
\begin{aligned}
& D_\g^*:\Lambda^{k+1,m+1}T^*\M\rightarrow \Lambda^{k,m}T^*\M \\ 
& S_\g^*:\Lambda^{k+1,m-1}T^*\M\rightarrow \Lambda^{k,m}T^*\M, 
\end{aligned}
\]
satisfying the same respective symmetries as $\Hg$ and $\Fg$, 
\[
(D_\g\psi^T)^T = D_\g\psi
\Textand 
(S_\g\psi^T)^T = S_\g^*\psi.
\]
We introduce the corresponding families of operators $\bHg:\Omega^{k,m}(\M)\rightarrow\Omega^{k+1,m+1}(\M)$ and  $\bFg:\Omega^{k,m}(\M)\rightarrow\Omega^{k+1,m-1}(\M)$, along with their $L^2$-duals,
\[
\begin{aligned}
& \bHg=\Hg+  D_\g        &\qquad\qquad  &\bHg^*=\Hg^*+  D_\g^* \\
& \bFg=\Fg+  S_\g  &  \qquad\qquad & \bFg^*=\Fg^*+  S_\g^*.
\end{aligned}
\]
By construction, $\bHg$ and $\bHg^*$ commute with transposition and $(\bFg(\cdot)^T)^T = \bFg^*$. 

The physical context imposes an analysis on manifolds with boundaries. Like in second-order elliptic theory, natural boundary conditions arise from integration by parts formulas. To this end, we introduce mixed projections of tangential and normal boundary components,
\beq
\begin{aligned}
&\PttD:\Omega^{k,m}(\M)\to \Omega^{k,m}(\dM) 
&\qquad 
&\PntD:\Omega^{k,m}(\M)\to \Omega^{k-1,m}(\dM) \\ 
&\PtnD:\Omega^{k,m}(\M)\to \Omega^{k,m-1}(\dM) 
&\qquad 
&\PnnD:\Omega^{k,m}(\M)\to \Omega^{k-1,m-1}(\dM).
\end{aligned}
\label{eq:intro_zero_order_operators}
\eeq
The first superscript in $\frakt\frakt$, $\frakt\frakn$, $\frakn\frakt$ and $\frakn\frakn$ refers to the projection of the form part, whereas the second superscript refers to the projection of the vector part. Thus, 
$\PttD\psi$ is obtained by pulling back $\psi$ to the boundary, 
$\PntD\psi$ is obtained by pulling back $i_\frakn\psi$ to the boundary, where $\frakn$ is the unit normal to the boundary, 
$\PtnD\psi$ is obtained by pulling back $i_\frakn^V\psi$ to the boundary, and finally
$\PnnD\psi$ is obtained by pulling back $i_\frakn i_\frakn^V\psi$ to the boundary.

The boundary projection operators satisfy the duality relations
\[
\begin{aligned}
&\PttD  \starG\psi  =  (-1)^{d+1} \star_{\gD} \PntD \psi
&\qquad
&\PtnD  \starG\psi  = (-1)^{d+1}  \star_{\gD} \PnnD \psi \\
&\PntD\starG\psi  = (-1)^{d+k+1} \star_{\gD} \PttD \psi
&\qquad
&\PnnD\starG\psi = (-1)^{d+k+1}  \star_{\gD} \PtnD \psi \\
&\PttD  \starG^V\psi  =  (-1)^{d+1} \star_{\gD}^V \PtnD \psi
&\qquad
&\PntD  \starG^V \psi  = (-1)^{d+1}  \star_{\gD}^V \PnnD \psi \\
&\PtnD\starG^V\psi  = (-1)^{d+m+1} \star_{\gD}^V \PttD \psi
&\qquad
&\PnnD\starG^V\psi = (-1)^{d+m+1}  \star_{\gD}^V \PntD \psi,
\end{aligned}
\]
where $\gD$ is the pullback metric at the boundary.

We further introduce first-order differential boundary operators,
\beq
\begin{aligned}
\frakT\psi &= \smallhalf \brk{\PntD \dg\psi-\dgD \PntD\psi}+\smallhalf \brk{\PtnD\dgV\psi- \dgD \PtnD\psi} \\
\frakT^*\psi &=  -\smallhalf\brk{ \PtnD\deltag\psi+\deltagD \PtnD\psi} - \smallhalf\brk{\PntD\deltagV\psi+\deltagDV\PntD\psi} \\
\frakF^*\psi &= \smallhalf\brk{\PnnD \dgV\psi- \dgDV \PnnD \psi} -\smallhalf\brk{\PttD \deltag \psi+\deltagD \PttD  \psi} \\
\frakF\psi &= \smallhalf\brk{\PnnD \dg\psi-  \dgD \PnnD \psi} -\smallhalf\brk{\PttD\deltagV \psi+ \deltagDV \PttD  \psi},
\end{aligned}
\label{eq:1st_order_proj}
\eeq
which satisfy the duality relations
\[
\begin{split}
& \frakT^*= (-1)^{dk+dm + k +m} \star_{\gD}\star_{\gD}^V\frakT\starG\starG^V \\
& \frakF^*=(-1)^{dk+k} \star_{\gD}\frakT\starG\\
& \frakF= (-1)^{dm + m} \star_{\gD}^V\frakT\starG^V.
\end{split}
\]
The boundary operators satisfy the integration by parts formulas, 
\beq
\begin{aligned}
\bra \bHg\psi, \eta\ket &= 
\bra \psi, \bHg^*\eta\ket + 
\int_{\dM}\Brk{(\PttD\psi,\frakT^* \eta)_\gD - 
(\frakT\psi,\PnnD \eta)_\gD}\VolumeD  \\
\bra \bFg\psi, \eta\ket &= 
\bra \psi, \bFg^*\eta\ket + 
\int_{\dM}\Brk{(\PtnD\psi,\frakF^*\eta)_\gD - 
(\frakF\psi,\PntD\eta)_\gD}\VolumeD,
\end{aligned}
\label{eq:ibp}
\eeq
independently of the choices of $D_\g$ and $S_\g$.

We introduce the fourth-order linear differential operator  $\Bg:\Omega^{k,m}(\M)\rightarrow \Omega^{k,m}(\M)$,
\[
\Bg = \bHg \bHg^* + \bHg^* \bHg + \bFg^* \bFg + \bFg \bFg^*,
\]
which we call a double bilaplacian. 
A substantial part of \cite{KL21a} is the establishment of the regular ellipticity of $\Bg$ when supplemented with several sets of boundary conditions. 

Let $E,E_1,\dots,E_m$ be vector spaces and let $L_i:E\to E_i$, $i=1,\dots,m$, be linear operators. As standard in the PDE literature, we denote
\[
(L_1,\dots,L_m) : E\to \bigoplus_{i=1}^m E_i.
\] 
In the framework of symmetric forms, 
\[
\Bg:\Theta^k(\M) \to \Theta^k(\M), 
\]
and the operators 
\[
\begin{aligned}
\frakB_\TT &= (\Bg,\PttD,\PtnD,\frakT,\frakF,\PttD\bHg^*,\frakT\bHg^*) \\
\frakB_\NN &= (\Bg,\PnnD,\PtnD,\frakT^*,\frakF,\PnnD\bHg,\frakT^*\bHg)
\end{aligned}
\label{eq:boundary_data_symmetric}
\] 
define regular elliptic boundary-value problems for elements in $\Theta^k(\M)$.
The associated kernels---the biharmonic modules,
\beq
\begin{gathered}
\calS\BH^k_{\TT}(\M) = \ker (\bHg,\bHg^*,\bFg,\PttD,\PtnD,\frakT,\frakF) \\
\calS\BH^k_{\NN}(\M) = \ker (\bHg,\bHg^*,\bFg,\PnnD,\PtnD,\frakT^*,\frakF),
\end{gathered}
\label{eq:BHTTNN}
\eeq
are finite-dimensional and contains only smooth sections, with an isometry,
\[
\starG\starG^V:\calS\BH^k_{\TT}(\M)\rightarrow \calS\BH^{d-k}_{\NN}(\M).
\]   

\section{Symmetric forms in constant curvature}

Following \cite[Sec.~7.3]{KL21a}, we focus on the following diagram, centered around symmetric $(1,1)$-forms: 
\beq
\begin{tikzcd}
&& {\Theta^2(\M)} \\
&& {} \\
{} && {\Theta^1(\M)} && {\Omega^{2,0}(\M)} \\
&& {} \\
&& {\Theta^0(\M)}
\arrow["{\bHg}"', curve={height=12pt}, from=5-3, to=3-3]
\arrow["{\bHg}"', curve={height=12pt}, from=3-3, to=1-3]
\arrow["{\bHg^*}", curve={height=-12pt}, tail reversed, no head, from=3-3, to=1-3]
\arrow["{\bHg^*}"', curve={height=12pt}, from=3-3, to=5-3]
\arrow["{\tfrac12(\bFg^* + (\bFg^*(\cdot))^T)}", curve={height=-12pt}, from=3-5, to=3-3]
\arrow["{\bFg}", curve={height=-12pt}, from=3-3, to=3-5]
\end{tikzcd}
\label{eq:exact_diagram}
\eeq
By appropriate choices of $D_\g$ and $S_\g$, we construct $\bHg$ and $\bFg$ satisfying exactness conditions in the case where $\g$ has constant sectional curvature, namely, the Riemannian curvature tensor is of the form
\[
\Rm_\g  = \tfrac12\kappa \, \g\wedge \g,
\]
where $\kappa$ is a constant.

The following properties of double forms in manifolds having constant sectional curvature are easily verified from the formulas in \cite[Sec.~3]{KL21a}:

\begin{lemma}
Let $(\M,\g)$ have constant section curvature $\kappa$, then for every $\psi\in\Omega^{k,m}(\M)$,
\begin{gather}
\dg\dg\psi = -\kappa \, \g\wedge \G\psi \label{eq:kappa1} \\
\deltag\deltag\psi = -\kappa\,\trace_\g\G_V\psi \label{eq:kappa2} \\
\dg\dgV\psi -\dgV\dg\psi = (m-k)\kappa\, \g\wedge\psi \label{eq:kappa3} \\ 
\dg\deltagV\psi -\deltagV\dg\psi = (d-m-k)\kappa\, \G\psi \label{eq:kappa4}.
\end{gather}
In particular, for every $\psi\in\Theta^1(\M)$,
\beq
\dg\dg\psi=0 
\qquad 
\dg\dgV\psi = \dgV\dg\psi
\textand
\dg\deltagV\psi = \deltagV\dg\psi.
\label{eq:kappa5}
\eeq
\end{lemma}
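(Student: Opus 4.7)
The plan is to derive all four identities from a single source: the failure of $\dg$ to square to zero on vector-valued forms, which is governed entirely by the curvature endomorphism $R_\g^*$ (as noted just after the definition of $\dg$ in the excerpt). In constant sectional curvature, $\Rm_\g=\tfrac12\kappa\,\g\wedge\g$, and this tensorial simplification is what forces the right-hand sides of (1)--(4) to collapse to expressions built only from $\g\wedge$, $\trace_\g$, $\G$, and $\G_V$.

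For (1), I would expand $\dg\dg\psi$ using the invariant formula for $\dg$ applied twice, antisymmetrize the resulting pair of covariant derivatives via the Ricci identity, and recognize the output as $R_\g^*$ acting on the vector part. Substituting $\Rm_\g=\tfrac12\kappa\,\g\wedge\g$ and re-indexing the terms by the very definition of the Bianchi sum $\G$ produces $-\kappa\,\g\wedge\G\psi$. Identity (2) then follows by Hodge duality: the relations between $\deltag$ and $\dg$ mediated by $\starG$, together with the duality between $\g\wedge(\cdot)$ and $\trace_\g$, convert (1) into the stated dual statement $-\kappa\,\trace_\g\G_V\psi$. Alternatively, a direct expansion of $\deltag\deltag\psi$ in a local orthonormal frame and a commutation of the two covariant divergences through the Ricci identity yields the same conclusion.

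For (3) and (4) the identities are commutator formulas. I expand $\dg\dgV\psi-\dgV\dg\psi$ (respectively $\dg\deltagV\psi-\deltagV\dg\psi$) in a local orthonormal frame, so that the mismatch between the two orderings is precisely an antisymmetrized double covariant derivative acting on the form and vector slots of $\psi$. By the Ricci identity, this antisymmetrization reduces to the curvature endomorphism acting on both slots; in constant sectional curvature these tensorial actions simplify to contractions with $\g$. The coefficients $(m-k)$ and $(d-m-k)$ then arise from bookkeeping: the curvature acting on the vector part contributes an $m$ and on the form part contributes a $-k$ (yielding (3)), while in (4) an interior product meets a factor of $\g$ to produce the trace $\trace_\g\g=d$, shifting the coefficient accordingly.

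The corollary for $\psi\in\Theta^1(\M)$ is immediate from (1), (3), (4) and the identification (recalled in Section~2) of symmetric $(1,1)$-forms with the kernel of $\G$: for such $\psi$ we have $\G\psi=0$, so (1) and (4) give $\dg\dg\psi=0$ and $\dg\deltagV\psi=\deltagV\dg\psi$, while in (3) the coefficient $m-k=0$. The main obstacle will be the careful combinatorial and sign bookkeeping of the double antisymmetrization in (3) and (4), and in particular isolating the precise coefficients $(m-k)$ and $(d-m-k)$ with their correct signs; everything else is a routine application of the Ricci identity once the constant-curvature form of $\Rm_\g$ has been inserted.
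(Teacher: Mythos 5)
Your proposal is correct and follows exactly the route the paper intends: the paper itself offers no proof of this lemma, remarking only that the identities are ``easily verified from the formulas in \cite{KL21a}'', and those formulas are precisely the Ricci-identity/curvature-endomorphism computations you describe, specialized to $\Rm_\g=\tfrac12\kappa\,\g\wedge\g$, with the $\Theta^1(\M)$ corollary following from $\frakG\psi=0$ and $k=m=1$ as you say. The only step worth spelling out in your duality derivation of \eqref{eq:kappa2} is that the metric adjoint of $\g\wedge\circ\,\frakG$ is a priori $\frakG_V\circ\trace_\g$, so one needs the (true, and easily checked in an orthonormal frame) commutation $\trace_\g\frakG_V=\frakG_V\trace_\g$ to land on the stated form, or else the direct frame expansion you already offer as an alternative.
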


\subsection{Construction of $\bHg$} 

We first construct $\bHg$ without assuming constant sectional curvature. 
We then show that for manifolds having constant sectional curvature,  this construction falls within the setting of \cite{KL21a}; it coincides with the operator denoted by $D_{1}$ in \cite{Cal61}. In \cite{GG88}, where $(\M,\g)$ is a locally-symmetric space, $D_{1}=\alpha\circ\bHg$, where $\alpha:\ker\frakG\rightarrow\ker\frakG$ is a smooth projection map only defined in locally-symmetric spaces. In constant sectional curvature, $\alpha=\id$, but in general the construction of $\alpha$ is abstract and expected to yield a different explicit operation for different symmetric spaces.

We introduce the tensorial operation $D_\g:\Lambda^{k,m}T^*\M\rightarrow \Lambda^{k+1,m+1}T^*\M$ given by
\beq
D_\g\psi = \tfrac12\brk{\trace_\g(\Rm_\g\wedge\psi)- \trace_\g \Rm_\g\wedge\psi-\Rm_\g\wedge(\trace_\g\psi)}.
\label{eq:D}
\eeq

We summarize below some of the properties of $D_\g$.
First, by a direct calculation:

\begin{lemma}
Given an orthonormal frame $\ortframe$ for $T\M$,
\beq
D_\g\psi = \tfrac12 \sum_i((i_{E_i}\Rm_\g)\wedge (i^V_{E_i}\psi)+(i^V_{E_i}\Rm_\g)\wedge (i_{E_i}\psi)).
\label{eq:formula_D}
\eeq
\end{lemma}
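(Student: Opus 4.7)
The plan is to expand the definition \eqref{eq:D} of $D_\g\psi$ directly, using $\trace_\g=\sum_{i} i_{E_i} i^V_{E_i}$ together with the graded Leibniz rule for interior products on wedges of double forms. The crucial preliminary observation is that $\Rm_\g\in\Omega^{2,2}(\M)$ has both form and vector degree equal to $2$, so every Leibniz sign $(-1)^{\deg}$ coming from pushing an interior product past $\Rm_\g$ is $+1$. Consequently
\[
i_{E_i}(\Rm_\g\wedge\psi)=(i_{E_i}\Rm_\g)\wedge\psi+\Rm_\g\wedge(i_{E_i}\psi),
\]
and the analogous identity holds with $i^V_{E_i}$ in place of $i_{E_i}$.

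Next I would apply $i^V_{E_i}$ followed by $i_{E_i}$ to $\Rm_\g\wedge\psi$, invoking the above Leibniz rule twice. This produces four terms for each $i$. Summing over the orthonormal frame, the two ``diagonal'' terms collapse, by the definition of $\trace_\g$, into $(\trace_\g\Rm_\g)\wedge\psi$ and $\Rm_\g\wedge(\trace_\g\psi)$; these exactly match the two subtracted summands in \eqref{eq:D} and are therefore cancelled. What remains are precisely the ``cross'' terms
\[
\sum_i\bigl[(i_{E_i}\Rm_\g)\wedge(i^V_{E_i}\psi)+(i^V_{E_i}\Rm_\g)\wedge(i_{E_i}\psi)\bigr],
\]
which, after multiplication by the overall prefactor $\tfrac12$, yield the claimed identity.

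Since every operation involved is $C^\infty(\M)$-bilinear and tensorial, it would suffice, if any ambiguity arose, to verify the manipulation on decomposable tensors $\Rm_\g=\omega\otimes F$ and $\psi=\alpha\otimes Q$, where the double-form wedge factors through separate wedges on the form and vector parts. I do not anticipate any serious obstacle here; the only delicate point is the sign bookkeeping in the graded Leibniz rule, and that point is benign precisely because the form and vector degrees of $\Rm_\g$ are both even. Finally, the formula is manifestly independent of the chosen orthonormal frame, matching the coordinate-free character of the left-hand side.
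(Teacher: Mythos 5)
Your proposal is correct and is precisely the ``direct calculation'' the paper alludes to (it gives no written proof for this lemma): expanding $\trace_\g=\sum_i i_{E_i}i_{E_i}^V$ on $\Rm_\g\wedge\psi$ via the graded Leibniz rule, noting the signs are all $+1$ because $\Rm_\g$ has even form and vector degrees, and cancelling the diagonal terms against $(\trace_\g\Rm_\g)\wedge\psi$ and $\Rm_\g\wedge(\trace_\g\psi)$ in \eqref{eq:D}. No gaps; the sign bookkeeping you flag is handled correctly.
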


\begin{lemma}
The operator $D_\g$ commutes with both transposition and the Bianchi sum,
\[
(D_\g\psi^T)^T = D_\g\psi 
\Textand
D_\g\frakG = \frakG D_\g.
\]
Moreover, for $\psi\in\Omega^{k,m}(\M)$ and $\varphi\in\VectorFormsM$,
\beq
D_\g(\psi\wedge\varphi)=(D_\g \psi)\wedge\varphi+(-1)^{k+m}\psi\wedge(D_\g \varphi).
\label{eq:DgLeibniz}
\eeq
\end{lemma}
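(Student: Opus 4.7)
The proof proceeds pointwise in an orthonormal frame $\ortframe$ with dual coframe $\{\theta^i\}$, deriving all three identities from the frame expression \eqref{eq:formula_D} for $D_\g$ by routine application of double-form calculus: the graded Leibniz rules for the interior products $i_X$ and $i^V_X$, the graded commutativity $\alpha\wedge\beta = (-1)^{k_1k_2+m_1m_2}\beta\wedge\alpha$ of the wedge on $(k_i,m_i)$-forms, the compatibility $(\alpha\wedge\beta)^T = \alpha^T\wedge\beta^T$ of transposition with the wedge, the commutativity $i_X i^V_Y = i^V_Y i_X$ and antisymmetry $i^V_X i^V_Y = -i^V_Y i^V_X$, and the two algebraic properties of the Riemann tensor: $\Rm_\g^T = \Rm_\g$ and the first Bianchi identity $\frakG\Rm_\g = 0$.

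\textbf{Transposition and Leibniz.} Symmetry of $\Rm_\g$ gives $(i_{E_i}\Rm_\g)^T = i^V_{E_i}\Rm_\g$, while $(i^V_{E_i}\psi)^T = i_{E_i}\psi^T$; applying \eqref{eq:formula_D} to $\psi^T$ and then transposing merely swaps the two summands, so $(D_\g\psi^T)^T = D_\g\psi$. For the Leibniz rule, substitute the graded Leibniz identities
\[
i^V_{E_i}(\psi\wedge\varphi) = (i^V_{E_i}\psi)\wedge\varphi + (-1)^m\psi\wedge(i^V_{E_i}\varphi),\quad i_{E_i}(\psi\wedge\varphi) = (i_{E_i}\psi)\wedge\varphi + (-1)^k\psi\wedge(i_{E_i}\varphi),
\]
into \eqref{eq:formula_D}, then commute $\psi$ past the $(1,2)$-form $i_{E_i}\Rm_\g$ and the $(2,1)$-form $i^V_{E_i}\Rm_\g$ via graded commutativity. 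A brief sign count shows that the signs from the two Leibniz expansions combine with those from the commutativity to produce the global factor $(-1)^{k+m}$.

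\textbf{Bianchi commutation (main obstacle).} Represent $\frakG = \sum_i \theta^i\wedge i^V_{E_i}$. Using the Leibniz rule together with $i_{E_i}i^V_{E_j} = i^V_{E_j}i_{E_i}$ and antisymmetry of interior products, derive the intertwining relations
\[
i^V_{E_i}\frakG\psi = -\frakG(i^V_{E_i}\psi),\qquad i_{E_i}\frakG\psi = i^V_{E_i}\psi - \frakG(i_{E_i}\psi),
\]
valid for any double form $\psi$. Expand both $\frakG D_\g\psi$ and $D_\g\frakG\psi$ via \eqref{eq:formula_D} and the relations above, and reshuffle the wedge factors to bring all $\theta^j$ to the front; the bulk of the terms match and the discrepancy collapses to
\[
\sum_i\frakG(i_{E_i}\Rm_\g)\wedge(i^V_{E_i}\psi) + \sum_i\frakG(i^V_{E_i}\Rm_\g)\wedge(i_{E_i}\psi) - \sum_i(i^V_{E_i}\Rm_\g)\wedge(i^V_{E_i}\psi).
\]
Applying the intertwining relations to $\Rm_\g$ itself and invoking $\frakG\Rm_\g = 0$ yields $\frakG(i_{E_i}\Rm_\g) = i^V_{E_i}\Rm_\g$ and $\frakG(i^V_{E_i}\Rm_\g) = 0$, so the three pieces cancel identically. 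The chief difficulty of the proof lies precisely in this step: the repeated Leibniz expansions and graded-commutativity rearrangements produce a bewildering array of sign-laden terms, and success hinges on organizing them so that the residual summations are recognized as Bianchi sums of the Riemann tensor, which vanish by the first Bianchi identity.
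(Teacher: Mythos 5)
Your proof is correct and takes essentially the same approach as the paper's (which is just a terse version of the same argument): all three identities are deduced by routine double-form algebra from the two key facts $\Rm_\g^T=\Rm_\g$ and $\frakG\Rm_\g=0$. The only cosmetic difference is that the paper derives the transposition and Bianchi commutations directly from the definition \eqref{eq:D}, using that $\trace_\g$ and $\wedge$ are compatible with these operations, whereas you work throughout from the frame formula \eqref{eq:formula_D} and spell out the intertwining relations explicitly.
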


\begin{proof}
Commutation with transposition is immediate since both the metric trace and the wedge product commute with transposition and $\Rm_\g$ is symmetric. 
Commutation with the Bianchi sum follows from similar considerations, as $\frakG\Rm_\g=0$.
Finally, the composition rule follows from \eqref{eq:formula_D}.
\end{proof}

\begin{lemma}
For a scalar-valued 1-form, $\omega\in\Omega^{1,0}(\M)$, 
\beq
(D_\g\omega)(X,Y;Z) = - \tfrac12 \omega(R_\g(X,Y)Z).
\label{eq:D_explicit_expression_1_forms}
\eeq
\end{lemma}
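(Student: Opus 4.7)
The plan is to apply the orthonormal frame formula \eqref{eq:formula_D} to $\omega \in \Omega^{1,0}(\M)$ and exploit the fact that the vector part has degree zero, which will eliminate most of the sum immediately. Specifically, since $\omega$ has no vector part, $i^V_{E_i}\omega = 0$ for every $i$, so the frame expression collapses to
\[
D_\g \omega = \tfrac{1}{2}\sum_{i=1}^d (i^V_{E_i}\Rm_\g)\wedge(i_{E_i}\omega).
\]
Each $i_{E_i}\omega = \omega(E_i)$ is a scalar function, so the wedge product on the right is simply scalar multiplication of $(2,1)$-forms.

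Next I would evaluate the $(2,1)$-form $i^V_{E_i}\Rm_\g$ on arguments $(X,Y;Z)$, obtaining $\Rm_\g(X,Y;E_i,Z)$, and convert this to the endomorphism $R_\g$ using the identity $\Rm_\g(X,Y;Z,W)=(R_\g(X,Y)Z,W)_\g$ together with the antisymmetry $\Rm_\g(X,Y;E_i,Z)=-\Rm_\g(X,Y;Z,E_i)$. This gives
\[
(D_\g\omega)(X,Y;Z) = -\tfrac{1}{2}\sum_{i=1}^d \omega(E_i)\,(R_\g(X,Y)Z,E_i)_\g.
\]
The final step is to collapse the sum using the musical isomorphism: expand $\omega = (\omega^\sharp)^\flat$, so that $\omega(E_i)=(\omega^\sharp,E_i)_\g$, and invoke completeness of the orthonormal frame, $\sum_i (V,E_i)_\g(W,E_i)_\g = (V,W)_\g$, to recognize the sum as $(R_\g(X,Y)Z,\omega^\sharp)_\g = \omega(R_\g(X,Y)Z)$.

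There is no real obstacle here; the only mildly delicate point is bookkeeping the sign produced by moving $E_i$ from the third to the fourth slot of $\Rm_\g$, and checking that the two surviving terms of \eqref{eq:D} really do reduce to a single clean expression (the term $\Rm_\g\wedge(\trace_\g\omega)$ vanishes because $\trace_\g\omega=0$ for a scalar-valued $(1,0)$-form, and similarly no contribution arises from $i_{E_i}\Rm_\g$ paired with $i^V_{E_i}\omega$). Once the signs are tracked, the identity follows by the chain of substitutions above, independently of the orthonormal frame chosen.
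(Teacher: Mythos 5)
Your proposal is correct and follows essentially the same route as the paper: substitute $\omega$ into \eqref{eq:formula_D}, observe that $i^V_{E_i}\omega=0$ so only the term $(i^V_{E_i}\Rm_\g)\wedge(i_{E_i}\omega)$ survives, then use the antisymmetry of $\Rm_\g$ in its vector slots together with $\Rm_\g(X,Y;Z,W)=(R_\g(X,Y)Z,W)_\g$ and the frame expansion $\sum_i\omega(E_i)(V,E_i)_\g=\omega(V)$ to collapse the sum. The sign bookkeeping you flag is handled exactly as in the paper's proof.
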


\begin{proof}
Substituting $\omega$ into \eqref{eq:formula_D}, noting that $i_{E_i}^V\omega=0$ (since $i_{E_i}$ annihilates $(0,1)$-forms),
\[
\begin{split}
D_\g\omega(X,Y;Z) &= \tfrac12 \sum_i \omega(E_i) \, \Rm_\g(X,Y;E_i,Z) \\
&= - \tfrac12 \sum_i \omega(E_i) \, (R_\g(X,Y)Z,E_i)_\g \\
&= -\tfrac12 \omega(R_\g(X,Y)Z).
\end{split}
\]
\end{proof}

\begin{lemma}
For $\psi\in\Omega^{1,1}(\M)$,
\beq
\begin{split}
D_\g\psi(X,Y;Z,W)&= \tfrac12 \brk{\psi(X;R_\g(Z,W)Y)- \psi(Y;R_\g(Z,W)X)} \\
&+ \tfrac12 \brk{\psi(R_\g(X,Y)W;Z)- \psi(R_\g(X,Y)Z;W)}.
\end{split}
\label{eq:D_explicit_expression}
\eeq
\end{lemma}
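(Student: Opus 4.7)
The plan is to substitute a $(1,1)$-form $\psi$ directly into the identity \eqref{eq:formula_D} proved in the previous lemma and unpack the definitions. Thus I begin from
\[
D_\g\psi=\tfrac{1}{2}\sum_{i}\brk{(i_{E_i}\Rm_\g)\wedge(i^V_{E_i}\psi)+(i^V_{E_i}\Rm_\g)\wedge(i_{E_i}\psi)},
\]
and keep track of bi-degrees: $i_{E_i}\Rm_\g\in\Omega^{1,2}(\M)$ while $i^V_{E_i}\psi\in\Omega^{1,0}(\M)$, and symmetrically $i^V_{E_i}\Rm_\g\in\Omega^{2,1}(\M)$ while $i_{E_i}\psi\in\Omega^{0,1}(\M)$. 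Each of the two summands is therefore a $(2,2)$-form that I evaluate on $(X,Y;Z,W)$ using the graded wedge rule $(\omega\otimes F)\wedge(\alpha\otimes Q)=(\omega\wedge\alpha)\otimes(F\wedge Q)$.

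A direct expansion gives four frame sums, of the general shapes
\[
\sum_i \Rm_\g(E_i,X;Z,W)\,\psi(Y;E_i),\qquad \sum_i \Rm_\g(X,Y;E_i,Z)\,\psi(E_i;W),
\]
together with their companions obtained by swapping $X\leftrightarrow Y$ or $Z\leftrightarrow W$. The main step is collapsing each such sum using (i) the pair-swap symmetry $\Rm_\g(A,B;C,D)=(R_\g(C,D)A,B)_\g$, (ii) the $\g$-skew-adjointness of $R_\g(A,B)$, i.e.\ $(R_\g(A,B)U,V)_\g=-(R_\g(A,B)V,U)_\g$, and (iii) the orthonormal expansion $\sum_i(V,E_i)_\g\,\phi(\cdots;E_i)=\phi(\cdots;V)$. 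For example,
\[
\sum_i \Rm_\g(E_i,X;Z,W)\,\psi(Y;E_i)=-\sum_i (R_\g(Z,W)X,E_i)_\g\,\psi(Y;E_i)=-\psi(Y;R_\g(Z,W)X),
\]
and the other three sums reduce analogously to $\psi(X;R_\g(Z,W)Y)$, $-\psi(R_\g(X,Y)Z;W)$ and $\psi(R_\g(X,Y)W;Z)$.

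Reassembling the four collapsed frame sums, including the overall factor $\tfrac12$, yields the claimed formula. The only real obstacle is a careful accounting of signs: one must correctly apply the skew-adjointness of $R_\g(A,B)$ and mind the alternating sign in the wedge product on the form part (respectively on the vector part) when evaluating the two types of summand. Once the signs are in place, the identity is a line of algebra.
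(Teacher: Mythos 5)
Your proof is correct, and your sign bookkeeping checks out: for instance $\sum_i \Rm_\g(E_i,X;Z,W)\,\psi(Y;E_i)=\sum_i (R_\g(Z,W)E_i,X)_\g\,\psi(Y;E_i)=-\psi(Y;R_\g(Z,W)X)$ via the pair symmetry and skew-adjointness, exactly as you state, and the four collapsed sums reassemble to \eqref{eq:D_explicit_expression} with the overall factor $\tfrac12$. Your route differs from the paper's: rather than expanding \eqref{eq:formula_D} directly for a general $(1,1)$-form, the paper first reduces by bilinearity to decomposable $\psi=\omega\wedge\eta$ with $\omega\in\Omega^{1,0}(\M)$, $\eta\in\Omega^{0,1}(\M)$, then applies the Leibniz rule \eqref{eq:DgLeibniz} to get $D_\g\psi=D_\g\omega\wedge\eta-\omega\wedge D_\g\eta$, and finishes by substituting the already-established $1$-form formula \eqref{eq:D_explicit_expression_1_forms}. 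The paper's argument leverages the structural lemmas it has just proved and keeps the sign analysis confined to the single wedge expansion of two rank-one factors; your argument is more self-contained (it bypasses both the Leibniz rule and the $1$-form case) at the price of tracking four frame sums and the alternating signs in the graded wedge on both the form and vector parts. Either is a legitimate ``direct calculation,'' and the two are of comparable length.
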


\begin{proof}
By bilinearity, it suffices to verify this assertion for $\psi = \omega\wedge \eta$, with $\omega\in\Omega^{1,0}(\M)$ and $\eta\in\Omega^{0,1}(\M)$. By the composition rule \eqref{eq:DgLeibniz},
\[
D_\g\psi = D_\g\omega \wedge \eta - \omega \wedge D_\g\eta,
\]
hence
\[
\begin{split}
D_\g\psi(X,Y;Z,W) &= D_\g\omega(X,Y;Z)\eta(W) - D_\g\omega(X,Y;W)\eta(Z) \\
&\qquad -  \omega(X)   D_\g\eta(Y;Z,W)  + \omega(Y)  D_\g\eta(X;Z,W) \\
&\EwR{eq:D_explicit_expression_1_forms} - \tfrac12\omega(R_\g(X,Y)Z)\eta(W) + \tfrac12\omega(X)\eta(R_\g(Z,W)Y) \\
&\qquad + \tfrac12\omega(R_\g(X,Y)W)\eta(Z) - \tfrac12 \omega(Y)  \eta(R_\g(Z,W)X),
\end{split}
\]
which is the desired result.
\end{proof}

For $k\in\BRK{0,1}$ we define $\bHg:\Omega^{k,k}(\M)\rightarrow\Omega^{k+1,k+1}(\M)$,
\[
\bHg=\Hg+ D_\g.
\]
For functions $D_\g f=0$, hence $\bHg f=\Hg f$. In view of \eqref{eq:D_explicit_expression}, $\bHg$ assumes the form  \eqref{eq:explicit_H_general} with respect to an orthonormal frame. Since both $\Hg$ and $D_\g$ commutes with $\frakG$,
\[
\frakG\bHg = \bHg\frakG.
\]
Moreover, since $\Hg\g=0$ (since $\dg\g=0$ and $\dgV\g=0$), insertion of $\g$ into \eqref{eq:D_explicit_expression} yields
\beq
\bHg\g = D_\g\g = -2\Rm_\g.
\label{eq:H_on_the_metric}
\eeq

We next examine how $\bHg$ restricts to symmetric forms:

\begin{proposition}
Let $\psi\in\Theta^1(\M)$. For sufficiently small $t\in\R$, the tensor $\g(\psi,t)=\g+t\psi$ is a metric. Then,
\beq
\bHg\psi = 2\derivAtZero\Rm_{\g(\psi,t)} + 2\,D_\g\psi.
\label{eq:prop3.5}
\eeq
\end{proposition}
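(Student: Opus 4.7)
The strategy is a direct pointwise verification using the first variation formula for the Riemann tensor, together with the explicit local expressions \eqref{eq:explicit_H_general} and \eqref{eq:D_explicit_expression}. No curvature assumption on $\g$ is needed: the proposition is a pointwise tensor identity valid on any Riemannian manifold.

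First, I would compute the right-hand side. Writing $\g_t=\g+t\psi$, the variation of the Christoffel symbols at $t=0$ is the tensor
\[
\dot{\Gamma}^k_{ij}=\tfrac12 \g^{kl}\left(\nabg_i\psi_{jl}+\nabg_j\psi_{il}-\nabg_l\psi_{ij}\right).
\]
Since $R^l{}_{ijk}=\partial_i\Gamma^l_{jk}-\partial_j\Gamma^l_{ik}+\Gamma\cdot\Gamma$, the cross $\Gamma\cdot\Gamma$ terms combine with partial derivatives of $\dot\Gamma$ to produce covariant derivatives, giving $\dot R^l{}_{ijk}=\nabg_i\dot\Gamma^l_{jk}-\nabg_j\dot\Gamma^l_{ik}$. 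Lowering the last index with the $t$-dependent metric $\g_t$ (which contributes an additional term from differentiating the metric) and collecting, one arrives at the classical formula
\[
2\derivAtZero(\Rm_{\g_t})_{ijkl}=\nabg^2_{ik}\psi_{jl}-\nabg^2_{jk}\psi_{il}-\nabg^2_{il}\psi_{jk}+\nabg^2_{jl}\psi_{ik}+(\text{zero-order curvature terms in }\psi),
\]
where the zero-order terms are explicit contractions of $\Rm_\g$ with $\psi$, antisymmetrized in $(i,j)$ and in $(k,l)$.

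Next, I would expand $\bHg\psi-2\,D_\g\psi$ using \eqref{eq:explicit_H_general} and \eqref{eq:D_explicit_expression}. The second-order part of $\bHg\psi$ is the symmetrized combination in which both $\nabg^2_{ik}\psi_{jl}$ and $\nabg^2_{ki}\psi_{jl}$ appear; by the Ricci identity these two orderings differ only by a curvature contraction with $\psi$, so the leading derivative part matches $2\derivAtZero\Rm_{\g_t}$ exactly. It then remains to check that all zero-order terms in $\bHg\psi-2\,D_\g\psi-2\,\derivAtZero\Rm_{\g_t}$ cancel. The curvature-contracted copies of $\psi$ produced by the four sources---(i) the variation formula, (ii) the Ricci identity used to convert the symmetrized Hessians of $\bHg$ into the unsymmetrized ones of the variation formula, (iii) the explicit curvature term already present in \eqref{eq:explicit_H_general}, and (iv) the $-2\,D_\g\psi$ term---collapse via the symmetries $R_{ijkl}=-R_{jikl}=R_{klij}$, the first Bianchi identity $\frakG\Rm_\g=0$, and the symmetry $\psi^T=\psi$ into the single combination prescribed by \eqref{eq:D_explicit_expression}, with matching $\tfrac12$ prefactors.

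The main obstacle is exactly this last bookkeeping step: organizing four distinct families of curvature-contracted $\psi$-terms, with careful attention to signs and the $\tfrac12$ prefactors, and verifying they reassemble into the antisymmetrized/symmetrized pattern characterizing $D_\g\psi$. Once this algebraic identification is made, the proposition follows, as no further analytic input is required.
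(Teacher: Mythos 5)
Your proposal is correct and follows essentially the same route as the paper: the paper's proof is a one-line appeal to the well-known variation formula for the curvature tensor together with the explicit expression \eqref{eq:D_explicit_expression} for $D_\g\psi$, and your outline simply fills in the details of that computation (variation of the Christoffel symbols, the Ricci identity to reconcile the two orderings of second covariant derivatives, and the cancellation of the zero-order curvature terms). The only difference is that you make explicit the bookkeeping the paper leaves implicit.
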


\begin{proof}
This is a direct consequence of the well-known variation formula of the curvature tensor (e.g. \cite[p.~560]{Tay11b}) and expression \eqref{eq:D_explicit_expression} for $D_\g\psi$. 
\end{proof}

Let $U\in\VF(\M)$. We denote by $U^\flat\in\Omega^{1,0}(\M)$ its metric dual (turning a vector field into a 1-form).
It is well-known that the Lie derivative of the metric along $U$ is the symmetrization of the covariant derivative of $U^\flat$. In our notations,
\beq
\calL_U \g = \dgV U^\flat + (\dgV U^\flat)^T.
\label{eq:calLg}
\eeq
In particular, for $f\in C^\infty(\M)$,
\beq
\calL_{{(df)}^\#} \g = \dgV df + (\dgV df)^T = 2\bHg f.
\label{eq:calL_df}
\eeq

\begin{lemma}
\label{lemma:2.6}
Let $U\in\VF(\M)$, let $\psi = \calL_U\g$ and let $\g(\psi,t)=\g+t\psi$. Then,
\[
\derivAtZero\Rm_{\g(\psi,t)} =\calL_U\Rm_\g.
\]
Consequently, substituting \eqref{eq:prop3.5}
\beq
\bHg\calL_U\g = 2\,\calL_U\Rm_\g +2\, D_\g\calL_U\g.
\label{eq:combine1}
\eeq
\end{lemma}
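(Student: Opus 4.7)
The plan is to exploit the diffeomorphism invariance of the Riemann curvature tensor together with the fact that the linearization of $\g\mapsto\Rm_\g$ along a family of metrics depends only on the first-order jet of the family. Concretely, I would fix a point of $\M$ and let $\phi_t$ denote the local flow of $U$ near it. By the very definition of the Lie derivative, $\derivAtZero\phi_t^*\g=\calL_U\g=\psi$, so the two one-parameter families of metrics $t\mapsto\phi_t^*\g$ and $t\mapsto\g(\psi,t)=\g+t\psi$ share the same first-order variation at $t=0$.

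Next, naturality of the curvature gives $\Rm_{\phi_t^*\g}=\phi_t^*\Rm_\g$, and differentiating at $t=0$ yields $\derivAtZero\Rm_{\phi_t^*\g}=\derivAtZero\phi_t^*\Rm_\g=\calL_U\Rm_\g$. On the other hand, since $\Rm$ is a quasi-linear second-order differential operator in $\g$---given in local coordinates by a polynomial expression in $\g$, $\g^{-1}$ and up to two derivatives of $\g$---its derivative at $\g$ along any smooth family $\g_t$ of metrics depends only on $\derivAtZero\g_t$. Combining these two observations with the fact that $\phi_t^*\g$ and $\g(\psi,t)$ have the same first-order variation, I conclude
\[
\derivAtZero\Rm_{\g(\psi,t)}=\derivAtZero\Rm_{\phi_t^*\g}=\calL_U\Rm_\g,
\]
which is the first assertion.

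Substituting this identity into the formula \eqref{eq:prop3.5} established in the previous proposition produces
\[
\bHg\calL_U\g=2\derivAtZero\Rm_{\g(\psi,t)}+2D_\g\psi=2\calL_U\Rm_\g+2D_\g\calL_U\g,
\]
which is the second assertion. I do not anticipate any substantive obstacle. The only minor subtleties are that when $\M$ has boundary, the flow $\phi_t$ need not be globally defined, but since the identity is pointwise (or, equivalently, purely local) this does no harm; and one must formally verify that the linearization of $\Rm$ depends only on the first-order variation of $\g$, which follows immediately from its coordinate expression. As an alternative bypass, one could skip diffeomorphism invariance entirely and carry out a direct coordinate computation using the explicit variation formula of the Riemann tensor, substituting $\psi=\calL_U\g$ and recognizing the result as $\calL_U\Rm_\g$; the naturality argument, however, is considerably cleaner.
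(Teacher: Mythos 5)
Your argument is correct and is essentially the paper's own proof: the paper likewise observes that $\g+t\psi=\varphi_t^*\g+o(t)$ for the flow $\varphi_t$ of $U$, invokes diffeomorphism invariance of the curvature tensor to get $\Rm_{\varphi_t^*\g}=\varphi_t^*\Rm_\g$, differentiates at $t=0$, and substitutes into \eqref{eq:prop3.5}. Your explicit remark that the linearization of $\Rm$ depends only on the first-order jet of the family is the (implicit) justification the paper also relies on, so there is nothing to add.
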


\begin{proof}
By \cite[pp.~43-44]{Pet16},
\[
\g(\psi,t)=\varphi^*_{t}\g+o(t),
\]
where $\varphi_{t}:\M\rightarrow\M$ is the flow of $U$. Since $\varphi_{t}$ is a diffeomorphism, $\varphi^*_{t}\g$ is isometric to $\g$, and by the invariance of the curvature tensor under isometries \cite[p.~199]{Lee18},
\[
\derivAtZero \Rm_{\g(\psi,t)} =
\derivAtZero \Rm_{\varphi^*_{t}\g}
=\derivAtZero \varphi^*_{t}\Rm_\g
=\calL_U\Rm_\g,
\]
where the last passage follows from the definition of the Lie derivative.
\end{proof}

\begin{lemma}
If $\g$ has constant sectional curvature $\kappa$, then for every $\psi\in\Theta^1(\M)$,
\beq
D_\g\psi = - \kappa \, \g\wedge\psi. 
\label{eq:D_for_kappa}
\eeq
\end{lemma}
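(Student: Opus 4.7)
The identity is a pointwise, purely algebraic statement and I plan to verify it by evaluating both sides on arbitrary tangent vectors $X,Y,Z,W\in T_p\M$, using the explicit expression \eqref{eq:D_explicit_expression} for $D_\g\psi$ on a symmetric $(1,1)$-form.

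First, I would compute the right-hand side. Applying the definition of the graded wedge product on double forms to the local expressions $\g = \g_{ab}\,dx^a\otimes dx^b$ and $\psi = \psi_{cd}\,dx^c\otimes dx^d$ and evaluating on $(X,Y;Z,W)$ gives
\[
(\g\wedge\psi)(X,Y;Z,W) = \g(X,Z)\psi(Y;W) - \g(X,W)\psi(Y;Z) - \g(Y,Z)\psi(X;W) + \g(Y,W)\psi(X;Z).
\]
Next I would compute the left-hand side. The constant-curvature hypothesis, combined with the sign conventions of the paper, namely $\Rm_\g(X,Y;Z,W) = (R_\g(X,Y)Z,W)_\g$ and $(\Rm_\g)_{ijkl} = \kappa(\g_{ik}\g_{jl}-\g_{jk}\g_{il})$, gives the classical formula
\[
R_\g(X,Y)Z = \kappa\brk{\g(X,Z)Y - \g(Y,Z)X}.
\]
Substituting this into each of the four $R_\g$-arguments appearing in \eqref{eq:D_explicit_expression} produces eight summands of the form $\pm\kappa\,\g(\cdot,\cdot)\psi(\cdot;\cdot)$. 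The terms pair up in a convenient way: after incorporating the prefactor $\tfrac12$, the sum collapses to
\[
-\kappa\Brk{\g(X,Z)\psi(Y;W) - \g(X,W)\psi(Y;Z) - \g(Y,Z)\psi(X;W) + \g(Y,W)\psi(X;Z)},
\]
which matches $-\kappa$ times the expression computed above for $(\g\wedge\psi)(X,Y;Z,W)$. Since $X,Y,Z,W$ were arbitrary, the identity $D_\g\psi = -\kappa\,\g\wedge\psi$ follows.

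The calculation is entirely mechanical; no analytic obstacle is anticipated. The only point requiring genuine attention is bookkeeping of the sign conventions linking $R_\g$ and $\Rm_\g$, which is what produces the minus sign in $-\kappa\,\g\wedge\psi$ rather than the opposite sign, together with the antisymmetry arising from the wedge evaluation. An alternative, more tensorial approach would substitute $\Rm_\g = \tfrac12\kappa\,\g\wedge\g$ directly into the tensorial definition \eqref{eq:D} and invoke standard trace identities of the form $\trace_\g(\g\wedge\g)$, $\trace_\g\g=d$, but the direct pointwise computation above is shorter and avoids any appeal to such combinatorial identities.
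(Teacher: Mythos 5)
Your proof is correct: evaluating \eqref{eq:D_explicit_expression} with $R_\g(X,Y)Z=\kappa\brk{\g(X,Z)Y-\g(Y,Z)X}$ (which is what the paper's conventions $\Rm_\g(X,Y;Z,W)=(R_\g(X,Y)Z,W)_\g$ and $(\Rm_\g)_{ijkl}=\kappa(\g_{ik}\g_{jl}-\g_{jk}\g_{il})$ give) does produce eight terms that pair up into $-\kappa(\g\wedge\psi)(X,Y;Z,W)$, and your formula for the wedge evaluation is consistent with the paper's normalization, as one can cross-check against $\Rm_\g=\tfrac12\kappa\,\g\wedge\g$ and $D_\g\g=-2\Rm_\g$. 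The paper's own proof is the same direct substitution in slightly different packaging --- it inserts $i_{E_i}\Rm_\g=-\kappa\,\g\wedge(E_i^\flat)^T$ into the orthonormal-frame formula \eqref{eq:formula_D} rather than evaluating \eqref{eq:D_explicit_expression} on arbitrary vectors --- so the two arguments differ only in which explicit expression for $D_\g$ serves as the starting point.
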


\begin{proof}
Noting that
\[
i_{E_i}\Rm_\g = - \kappa\, \g\wedge (E_i^\flat)^T, 
\]
we substitute into \eqref{eq:formula_D} to obtain
\[
D_\g\psi = - \tfrac12 \kappa \, \g\wedge \sum_i( (E_i^\flat)^T \wedge (i^V_{E_i}\psi)+ (E_i^\flat)\wedge (i_{E_i}\psi)) = -\kappa\,\g\wedge\psi.
\]
Note that the last passage holds only for $k=m=1$.
\end{proof}

If we add to this the fact that by \eqref{eq:kappa5}, $\dg$ and $\dgV$ commute when acting on symmetric $(1,1)$-forms, we find that $\bHg$ operates as in \eqref{eq:explicit_H_constant}. The main result of this section is the following:

\begin{proposition}
\label{prop:2.8}
If $\g$ has constant sectional curvature, then for every $U\in\VF(\M)$,
\[
\bHg\calL_U\g = 0,
\]
and for every $f\in\Omega^{0,0}(\M)$,
\[
\bHg\bHg f = 0.
\]
\end{proposition}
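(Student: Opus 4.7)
The plan is to reduce both statements to the constant-curvature identity $D_\g\psi = -\kappa\,\g\wedge\psi$ for $\psi\in\Theta^1(\M)$ established in the preceding lemma, together with formula \eqref{eq:combine1} from Lemma~\ref{lemma:2.6}. From \eqref{eq:combine1},
\[
\bHg\calL_U\g = 2\calL_U\Rm_\g + 2 D_\g\calL_U\g,
\]
so it suffices to show that when $\g$ has constant sectional curvature $\kappa$, the two terms cancel, i.e.\ $\calL_U\Rm_\g = \kappa\,\g\wedge\calL_U\g$.

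The next step is a direct calculation of $\calL_U\Rm_\g$ using the constant-curvature form $\Rm_\g = \tfrac12 \kappa\,\g\wedge\g$. Since $\calL_U$ is a derivation of the tensor algebra, and the double-form wedge product is built from tensor products followed by alternation, $\calL_U$ acts as a derivation of $\wedge$ as well:
\[
\calL_U(\g\wedge\g) = (\calL_U\g)\wedge\g + \g\wedge(\calL_U\g).
\]
Because $\g$ and $\calL_U\g$ are both symmetric $(1,1)$-forms, the graded-commutativity sign of the double-form wedge is $(-1)^{1\cdot 1 + 1\cdot 1}=1$, so the two terms on the right coincide and $\calL_U\Rm_\g = \kappa\,\g\wedge\calL_U\g$. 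Combined with $D_\g\calL_U\g = -\kappa\,\g\wedge\calL_U\g$ from the lemma, substituting into \eqref{eq:combine1} gives $\bHg\calL_U\g = 0$, which is the first claim.

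The second statement follows by reduction to the first. Identity \eqref{eq:calL_df} rewrites $\bHg f$ as $\tfrac12 \calL_{(df)^\sharp}\g$, so $\bHg f\in\Theta^1(\M)$ is itself a Lie derivative of the metric. Applying the first part with $U=(df)^\sharp$ then yields $\bHg\bHg f = \tfrac12\bHg\calL_{(df)^\sharp}\g = 0$.

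The only point requiring any care is the verification that $\calL_U$ really is a derivation of the double-form wedge, and that the symmetric $(1,1)$-wedge is commutative; both are routine consequences of the definitions and not genuine obstacles. The conceptual content of the proof is entirely captured by the constant-curvature identities \eqref{eq:D_for_kappa} and $\Rm_\g = \tfrac12\kappa\,\g\wedge\g$, which force the ``curvature'' and ``algebraic'' contributions to $\bHg\calL_U\g$ to be equal and opposite.
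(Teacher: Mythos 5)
Your proof is correct and follows essentially the same route as the paper: both invoke \eqref{eq:combine1}, compute $\calL_U\Rm_\g = \kappa\,\g\wedge\calL_U\g$ from $\Rm_\g = \tfrac12\kappa\,\g\wedge\g$ via the Lie-derivative product rule, cancel against $D_\g\calL_U\g = -\kappa\,\g\wedge\calL_U\g$ from \eqref{eq:D_for_kappa}, and deduce the second claim from \eqref{eq:calL_df}. The only difference is that you spell out the derivation property of $\calL_U$ on the double-form wedge and the sign check for symmetric $(1,1)$-forms, which the paper leaves implicit.
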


\begin{proof}
If $\g$ has constant sectional curvature $\kappa$, then by the product rule for Lie derivatives,
\[
\calL_U\Rm_\g = \kappa\, \g\wedge\calL_U\g 
\EwR{eq:D_for_kappa} - D_\g \calL_U\g.
\]
Substituting into \eqref{eq:combine1} we obtain the first assertion.
The second assertion follows from the first, as for $f\in\Omega^{0,0}(\M)$,
\[
\bHg\bHg f \EwR{eq:calL_df} \tfrac12 \bHg\calL_{(df)^\#}\g = 0.
\]
\end{proof}

In fact, $\bHg\bHg f =0$ holds for a wider family of manifolds---locally-symmetric spaces:

\begin{proposition}
Let $(\M,\g)$ be a locally-symmetric space, i.e., $\nabg\Rm_\g=0$. 
For every vector field $U\in\VF(\M)$,
\[
\begin{split}
 \bHg\calL_U\g &= -\G_V dU^\flat(X;R_\g(Z,W)Y) + \G_V dU^\flat(Y;R_\g(Z,W)X) \\
&\quad + \G_V dU^\flat(W;R_\g(X,Y)Z) - \G_V dU^\flat(Z;R_\g(X,Y)W).
\end{split}
\]
In particular,
for every $f\in\Omega^{0,0}(\M)$,
\[
\bHg\bHg f = 0.
\]
\end{proposition}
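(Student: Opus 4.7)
The plan is to start from identity~\eqref{eq:combine1},
\[
\bHg\calL_U\g = 2\,\calL_U\Rm_\g + 2\,D_\g\calL_U\g,
\]
which holds on any Riemannian manifold. Under the locally-symmetric hypothesis $\nabg\Rm_\g=0$, the idea is to expand both summands as explicit algebraic contractions of $\Rm_\g$ against $\nabg U$, split $\nabg U^\flat$ into its symmetric and antisymmetric parts, and exhibit a term-by-term cancellation of all symmetric contributions.

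First I would apply the standard identity
\[
(\calL_U T)(X_1,\dots,X_r)=(\nabg_U T)(X_1,\dots,X_r)+\sum_{i=1}^{r} T(X_1,\dots,\nabg_{X_i}U,\dots,X_r)
\]
to the $(0,4)$-tensor $\Rm_\g$. Since $\nabg\Rm_\g=0$, this leaves four contractions of $\nabg U$ against a slot of $\Rm_\g$, each of which, by pair-symmetry and the skew-adjointness of the endomorphism $R_\g(X,Y)$, can be written as a scalar pairing $\g(\nabg_\bullet U,R_\g(\cdot,\cdot)\cdot)$. Substituting
\[
\g(\nabg_A U,B)=\tfrac12(\calL_U\g)(A,B)+\tfrac12\,dU^\flat(A,B)
\]
splits $2\,\calL_U\Rm_\g(X,Y;Z,W)$ into an ``$\calL_U\g$-part'' and a ``$dU^\flat$-part''. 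In parallel, I would expand $2\,D_\g\calL_U\g(X,Y;Z,W)$ via the explicit formula \eqref{eq:D_explicit_expression} together with the symmetry of $\calL_U\g$; the four resulting terms are of exactly the same shape as the ``$\calL_U\g$-part'' produced in the previous step. The crucial observation, to be verified by a direct term-by-term comparison using only the curvature symmetries, is that these two ``$\calL_U\g$-parts'' are opposite in sign and annihilate one another. What survives is precisely the ``$dU^\flat$-part'', which, after rewriting $dU^\flat(A,B)=-\G_V dU^\flat(A;B)$ (a direct unwinding of $\G_V=(\G(\cdot)^T)^T$ applied to the $(2,0)$-form $dU^\flat$), takes the form stated in the proposition.

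The second assertion drops out of the first: with $U=(df)^\sharp$ one has $U^\flat=df$, hence $dU^\flat=d^2f=0$, and every term of the right-hand side vanishes. Since $\bHg f=\tfrac12\calL_{(df)^\sharp}\g$ by \eqref{eq:calL_df}, this yields $\bHg\bHg f=\tfrac12\bHg\calL_{(df)^\sharp}\g=0$. The hard part will be the central cancellation: eight $\calL_U\g$-terms must be paired up using the pair-symmetries $\Rm_\g(A,B;C,D)=-\Rm_\g(A,B;D,C)=\Rm_\g(C,D;A,B)$ and the symmetry of $\calL_U\g$, and the sign conventions (in particular, the skew-adjointness of $R_\g(X,Y)$ and the order of slots induced by $\G_V$) are the only likely source of bookkeeping errors.
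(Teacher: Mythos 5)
Your proposal is correct and follows essentially the same route as the paper: both start from \eqref{eq:combine1}, use $\nabg\Rm_\g=0$ to reduce $\calL_U\Rm_\g$ to four algebraic contractions of $\nabg U^\flat$ against $\Rm_\g$, combine these with the explicit formula \eqref{eq:D_explicit_expression} for $D_\g\calL_U\g$ so that only the antisymmetric part $dU^\flat=\nabg U^\flat-(\nabg U^\flat)^T$ survives, and deduce the second assertion from $d\,df=0$. The only difference is organizational --- you split $\nabg U^\flat$ into symmetric and antisymmetric parts first and then cancel, whereas the paper subtracts $\tfrac12\calL_U\g$ from $\nabg U^\flat$ term by term --- which is the same computation.
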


\begin{proof}
We start with \eqref{eq:combine1},
\[
\bHg\calL_U\g = 2\brk{ \calL_U\Rm_\g + D_\g\calL_U \g}.
\]
Now,
\[
\begin{split}
\calL_U\Rm_\g(X,Y;Z,W) &= U(\Rm_\g(X,Y;Z,W)) - \Rm_\g(\calL_U X,Y;Z,W) - \Rm_\g(X,\calL_UY;Z,W) \\
&- \Rm_\g(X,Y;\calL_U Z,W) - \Rm_\g(X,Y;Z,\calL_U W),
\end{split}
\]
and
\[
\begin{split}
\nabg_U\Rm_\g(X,Y;Z,W) &= U(\Rm_\g(X,Y;Z,W)) - \Rm_\g(\nabg_U X,Y;Z,W) - \Rm_\g(X,\nabg_UY;Z,W) \\
&- \Rm_\g(X,Y;\nabg_U Z,W) - \Rm_\g(X,Y;Z,\nabg_U W).
\end{split}
\]
Subtracting the second equation from the first, using the fact that $\nabg\Rm_\g=0$ and the symmetry of the connection,
\[
\begin{split}
\calL_U\Rm_\g(X,Y;Z,W) &= \Rm_\g(\nabg_X U,Y;Z,W) + \Rm_\g(X,\nabg_Y U;Z,W) \\
&\quad + \Rm_\g(X,Y;\nabg_Z U,W) + \Rm_\g(X,Y;Z,\nabg_W U) \\
&= - \nabg U^\flat(X;R_\g(Z,W)Y) + \nabg U^\flat(Y;R_\g(Z,W)X) \\
&\quad + \nabg U^\flat(W;R_\g(X,Y)Z) - \nabg U^\flat(Z;R_\g(X,Y)W).
\end{split}
\]
On the other hand, by \eqref{eq:D_explicit_expression},
\[
\begin{split}
D_\g\calL_U\g(X,Y;Z,W)&= \tfrac12 \brk{\calL_U\g(X;R_\g(Z,W)Y)- \calL_U\g(Y;R_\g(Z,W)X)} \\
&+ \tfrac12 \brk{\calL_U\g(R_\g(X,Y)W;Z)- \calL_U\g(R_\g(X,Y)Z;W)}.
\end{split}
\]
Adding the two, noting that
\[
\begin{split}
\nabg U^\flat(X,Y) - \tfrac12 \calL_U\g(X,Y) &= \tfrac12 \brk{\nabg U^\flat(X,Y) -  \nabg U^\flat(X,Y)} \\
&= \tfrac12 d U^\flat(X,Y) \\
&= \tfrac12 \G_V dU^\flat(X;Y),
\end{split}
\]
we obtain the first result. The second part follows from the first, as
\[
\bHg f = \tfrac12 \calL_{(df)^\#}\g,
\]
and $\G_V d df = 0$.
\end{proof}

%
%

For $k\in\BRK{0,1}$, we further set 
\[
\bHg^*:\Omega^{k+1,k+1}(\M)\rightarrow \Omega^{k,k}(\M)
\]
to be the $L^2$-dual of $\bHg$. Denoting by $D_\g^*:\Lambda^{k+1,l+1}T^*\M\rightarrow \Lambda^{k,l}T^*\M$ the metric dual of $D_\g$, 
\[
\bHg^*\psi =\Hg^*\psi+D_\g ^*\psi.
\]
If $\g$ has constant sectional curvature $\kappa$, then by duality, 
\[
D_\g^*\psi = -\kappa\trace_\g\psi,
\]
and for every $\psi\in\Omega^{2,2}(\M)$,
\[
\bHg^*\bHg^*\psi=0.
\]


\subsection{Construction of $\bFg$}

The commutator $S_\g:\Lambda^{k,m}T^*\M\rightarrow \Lambda^{k+1,m-1}T^*\M$,
\[
S_\g = \tfrac12(\dg\deltagV-\deltagV\dg),
\]
is a tensorial operator \cite[Eq. (3.11)]{KL21a}.
We define $\bFg:\Omega^{k,m}(\M)\rightarrow\Omega^{k+1,m-1}(\M)$ by
\[
\bFg = \Fg+ S_\g = \dg\deltagV,
\]
and $\bFg^*:\Omega^{k+1,m-1}(\M)\rightarrow\Omega^{k,m}(\M)$ as its $L^2$-dual, 
\[
\bFg^* = \dgV\deltag.
\]
We note that
\[
(\bFg\lambda^T)^T = \bFg^*\lambda.
\]
In the sequel we only consider $\bFg$ acting on $\Theta^1(\M)$ and  $\bFg^*$ acting on $\Omega^{2,0}(\M)$.
We note that
\[
\tfrac12\brk{\bFg^* + (\bFg^*(\cdot))^T} : \Omega^{2,0}(\M)\to \Theta^1(\M)
\]
is $L^2$-dual to $\bFg|_{\Theta^1(\M)}$. For manifolds having constant sectional curvature, it follows from \eqref{eq:kappa5} that $S_\g\psi=0$ for $\psi\in\Theta^1(\M)$.

\begin{proposition}
\label{prop:2.9}
For $\lambda\in\Omega^{2,0}(\M)$,
\[
\bFg^*\lambda + (\bFg^*\lambda)^T=\calL_{(\delta\lambda)^{\sharp}}\g.
\]
Consequently, for $\g$ having constant sectional curvature,
\[
\bHg(\bFg^*\lambda + (\bFg^*\lambda)^T)=0.
\]
By duality,
\[
\bFg \bHg^*|_{\Theta^2(\M)} = 0.
\]
\end{proposition}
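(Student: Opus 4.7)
The plan is to establish the three assertions in sequence: the first is an unfolding of definitions, the second a direct application of \propref{prop:2.8}, and the third a duality argument localized to compactly supported test forms.

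For the first identity, I would set $\omega := \deltag\lambda \in \Omega^{1,0}(\M)$, so that $\bFg^*\lambda = \dgV\deltag\lambda = \dgV\omega$. The vector field $U = \omega^\sharp = (\deltag\lambda)^\sharp$ satisfies $U^\flat = \omega$, hence formula \eqref{eq:calLg} applied to $U$ gives
\[
\calL_{(\deltag\lambda)^\sharp}\g = \dgV\omega + (\dgV\omega)^T = \bFg^*\lambda + (\bFg^*\lambda)^T,
\]
which is the claim.

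For the second assertion, the first identity rewrites the left-hand side as $\bHg\calL_{(\deltag\lambda)^\sharp}\g$, which vanishes by \propref{prop:2.8} whenever $\g$ has constant sectional curvature.

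For the third assertion, I would test $\bFg\bHg^*\psi$ pointwise against an arbitrary compactly supported $\lambda \in \Omega^{2,0}(\M)$, so that all boundary terms in the integration by parts formulas \eqref{eq:ibp} vanish. Since $\bHg^*$ commutes with transposition, $\bHg^*\psi \in \Theta^1(\M)$ is symmetric, which allows me to replace $\bFg^*\lambda$ by its symmetric part in the pairing:
\[
\langle \bFg\bHg^*\psi, \lambda\rangle = \langle \bHg^*\psi, \bFg^*\lambda\rangle = \langle \bHg^*\psi, \tfrac12(\bFg^*\lambda + (\bFg^*\lambda)^T)\rangle = \tfrac12 \langle \psi, \bHg(\bFg^*\lambda + (\bFg^*\lambda)^T)\rangle,
\]
where the last equality uses the $\bHg/\bHg^*$ duality (again with vanishing boundary terms). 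By the second assertion, the rightmost pairing is zero. Arbitrariness of $\lambda$ forces $\bFg\bHg^*\psi$ to vanish in the interior, and by continuity on all of $\M$.

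The only point requiring mild care is the transposition and symmetrization bookkeeping in the duality step; all other manipulations follow immediately from the identities already established, so I do not anticipate a conceptual obstacle.
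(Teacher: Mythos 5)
Your proof is correct and follows essentially the same route as the paper: identify $\bFg^*\lambda=\dgV\deltag\lambda$ as the covariant derivative of the $1$-form $\deltag\lambda$, symmetrize via \eqref{eq:calLg}, and invoke \propref{prop:2.8}. The only difference is that you spell out the final ``by duality'' step (testing against compactly supported $\lambda$ so all boundary terms in \eqref{eq:ibp} drop, and using symmetry of $\bHg^*\psi$ to insert the symmetrization), which the paper leaves implicit; your bookkeeping there is sound.
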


\begin{proof}
By definition,
\[
\bFg^*\lambda =\dgV\deltag\lambda = \nabg\delta\lambda,
\label{eq:F_lie_derivative}
\]
where we used the fact $\deltag=\delta$ and $\dgV=\nabg$ for scalar-valued forms. 
Hence,
\[
\bFg^*\lambda+(\bFg^*\lambda)^T =
(\nabg \delta\lambda) + (\nabg \delta\lambda)^T \EwR{eq:calLg}
\calL_{(\delta\lambda)^{\sharp}}\g.
\] 
The second assertion follows from \propref{prop:2.8}.
\end{proof}

\begin{proposition}
Let $\g$ have constant sectional curvature. Then, for every
$f\in C^{\infty}(\M)=\Omega^{0,0}(\M)$ and $\lambda\in\Omega^{2,0}(\M)$,
\[
\bFg\bHg f=0
\Textand 
\bHg^*(\bFg^*\lambda + (\bFg^*\lambda)^T) = 0.
\]
\end{proposition}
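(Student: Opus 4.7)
The plan is to prove the two identities in order, with the second following from the first by $L^{2}$-duality.

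For $\bFg\bHg f = 0$: since $f$ is scalar, $D_\g f = 0$ and so $\bHg f = \Hg f = \dg\dgV f = \Hess_\g f$. The goal is to commute the middle $\deltagV\dg$ in
\[
\bFg\bHg f = \dg\deltagV\dg\dgV f .
\]
Applying \eqref{eq:kappa4} to $\psi = \dgV f \in \Omega^{0,1}(\M)$ (so $k=0$, $m=1$, $d - k - m = d - 1$), and observing that $\G\dgV f = df$ directly from the definition of the Bianchi sum on $(0,1)$-forms, we obtain
\[
\dg\deltagV\dgV f - \deltagV\dg\dgV f = (d-1)\kappa\, df .
\]
Hitting this with $\dg$ and using $d(df) = 0$ yields $\bFg\bHg f = \dg\deltagV\dg\dgV f = \dg\dg\brk{\deltagV\dgV f}$. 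But $\deltagV\dgV f \in \Omega^{0,0}(\M)$ is a scalar, so by \eqref{eq:kappa1} we have $\dg\dg\brk{\deltagV\dgV f} = -\kappa\,\g\wedge\G\brk{\deltagV\dgV f} = 0$, completing the first identity.

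For the second identity, observe that $D_\g$ commutes with transposition (Lemma 3.2), and hence so does its $L^{2}$-dual $D_\g^*$; combined with the corresponding property of $\Hg^*$ recorded in Section~2, the operator $\bHg^*$ commutes with transposition. Since $\bHg^*\bFg^*\lambda \in \Omega^{0,0}(\M)$ is a scalar and is therefore fixed under transposition,
\[
\bHg^*\brk{\bFg^*\lambda + (\bFg^*\lambda)^T} = 2\,\bHg^*\bFg^*\lambda ,
\]
so it suffices to show $\bHg^*\bFg^*\lambda = 0$. I would pair against an arbitrary smooth test function $f$ whose support is compactly contained in the interior of $\M$. For such $f$ the integration by parts identities \eqref{eq:ibp} produce no boundary contribution, so
\[
\pair{\bHg^*\bFg^*\lambda,\, f} = \pair{\bFg^*\lambda,\, \bHg f} = \pair{\lambda,\, \bFg\bHg f} = 0
\]
by the first identity. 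Since $\bHg^*\bFg^*\lambda$ is smooth and its $L^{2}$ pairing with every such $f$ vanishes, it is identically zero on $\M$.

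The only non-formal input is the first identity, where the main obstacle is the bookkeeping: one must identify the correct commutator from Lemma~3.1, track the codegrees so that $\dgV f$ sits in $\Omega^{0,1}$ and $\G\dgV f = df$, and recognize that the outer $\dg\dg$ then lands on a scalar and vanishes by \eqref{eq:kappa1}. Once this is in place, the second identity is essentially a formal consequence of transposition and $L^{2}$-duality.
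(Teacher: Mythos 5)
Your proof is correct and follows essentially the same route as the paper: the first identity is obtained by commuting $\deltagV$ past $\dg$ via \eqref{eq:kappa4} applied to $\dgV f\in\Omega^{0,1}(\M)$, using $\G\dgV f=df$, and killing the remaining terms because $\dg\dg$ vanishes on scalar-valued forms; the second identity is then the $L^2$-dual statement, which the paper also dispatches by duality (your compactly-supported test-function argument is just a spelled-out version of that one-line remark).
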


\begin{proof}
The second assertion follows from the first by duality. For $f\in C^{\infty}(\M)$,

\[
\begin{split}
\deltagV\bHg f =  \deltagV \dg \dgV f \EwR{eq:kappa4}  (\dg \deltagV - (d-1)\kappa\G)\dgV f
= d \deltagV\dgV f - (d-1)\kappa\, d f,
\end{split}
\]
where in the last passage we used the fact that $\G\dgV f = d f$. Thus, 
\[
\begin{split}
\bFg\bHg f =  \dg \deltagV \dg \dgV f = dd \deltagV\dgV f - (d-1)\kappa\, dd f = 0.
\end{split}
\]

\end{proof}

\subsection{Decomposition of symmetric forms}

We summarize the set of identities proved in the previous section:
if $(\M,\g)$ has constant sectional curvature $\kappa$, then the operators in Diagram \eqref{eq:exact_diagram} are given by
\[
\begin{gathered}
\bHg|_{\Theta^0(\M)} = \dgV d
\qquad
\bHg|_{\Theta^1(\M)} = \tfrac12(\dgV \dg + \dg\dgV)  -\kappa \,\g\wedge \\
\bHg^*|_{\Theta^1(\M)} = \deltag \deltagV
\qquad
\bHg^*|_{\Theta^2(\M)} = \tfrac12(\deltag \deltagV + \deltagV \deltag)  -\kappa \,\trace_\g \\
\bFg|_{\Theta^1(\M)} = \dg\deltagV \\
\bFg^*|_{\Omega^{2,0}(\M)} = \dgV\deltag,
\end{gathered}
\]
and satisfy
\beq
\begin{gathered}
\bHg\bHg=0 \qquad \bFg\bHg=0 \qquad \bHg(\bFg^*+(\bFg^*(\cdot))^T)=0 \\
\bHg^*\bHg^*=0 \qquad \bFg\bHg^*=0 \qquad \bHg^*(\bFg^*+(\bFg^*(\cdot))^T)=0.
\end{gathered}
\label{eq:exact_relations}
\eeq

It then follows from Theorems~7.5 and 7.9 in \cite{KL21a}:

\begin{theorem}
\label{thm:3.11}
Let $\g$ have constant sectional curvature. 
Then, the module of symmetric $(1,1)$-forms decomposes $L^2$-orthogonally into 
\[
\begin{split}
\Theta^1(\M) &= \calS\EE^1(\M)\oplus 
\calS\CC^1(\M)
\oplus \calS\EC^1(\M) \oplus \calS\BH^1(\M),
\end{split}
\]
where 
\[
\begin{aligned}
\calS\EE^1(\M) &= \{\bHg f ~:~ f\in C^\infty(\M)\cap \ker(\PttD,\frakT)\} \\
\calS\CC^1(\M) &= \{\bHg^* \psi ~:~ \psi\in\Theta^2(\M)\cap \ker(\PnnD,\frakT^*)\} \\
\calS\EC^1(\M) &= \{\bFg^* \lambda + (\bFg^*\lambda)^T ~:~ \lambda\in\Omega^{2,0}(\M)\cap \ker(\PntD,\frakF^*)\} \\
\calS\BH^1(\M) &= \Theta^1(\M)\cap \ker(\bHg,\bHg^*,\bFg). 
\end{aligned}
\] 
The biharmonic module decomposes further into either
\beq
\begin{split}
&\calS\BH^1(\M) = \calS\BH^1_{\bHg + (\bFg^* + (\bFg^*(\cdot))^T}(\M)\oplus\calS\BH_{\NN}^1(\M)
\\& \calS\BH^1(\M) = \calS\BH^1_{\bHg^* + (\bFg^* + (\bFg^*(\cdot))^T}(\M)\oplus\calS\BH_{\TT}^1(\M),
\end{split}
\label{eq:decompositon_of_symmetric_biharmonic}
\eeq
where 
\[
\begin{split}
&\calS\BH^1_{\bHg + (\bFg^* + (\bFg^*(\cdot))^T}(\M) = \calS\BH^1(\M) \cap\brk{\image\bHg + \image(\bFg^* + (\bFg^*(\cdot))^T)}, 
\\&\calS\BH^1(\M)_{\bHg^* + (\bFg^* + (\bFg^*(\cdot))^T}(\M)=\calS\BH^1(\M) \cap\brk{\image\bHg^{*} + \image(\bFg^* + (\bFg^*(\cdot))^T)}, 
\end{split}
\] 
and $\calS\BH_{\NN}^1(\M),\calS\BH_{\TT}^1(\M)$ have been defined in \eqref{eq:BHTTNN}. 
An analogous decomposition holds in Sobolev regularity $W^{s,p}$, 
for every $s\in\mathbb{N}\cup\{0\}$ and $p\geq 2$, where the potentials $f$, $\psi$ and $\lambda$ have  $W^{s+2,p}$ regularity.
All spaces in both decompositions are closed in the $W^{s,p}$-topology. 
\end{theorem}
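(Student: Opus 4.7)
The theorem is essentially a direct application of the general elliptic/Hodge-type decomposition machinery developed in \cite{KL21a} (Theorems 7.5 and 7.9), specialized to the diagram \eqref{eq:exact_diagram} centered on $\Theta^1(\M)$. My plan is therefore to verify that the present setting satisfies every hypothesis needed to invoke those abstract results, and then translate the conclusions back into the concrete form stated above.

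First, I would assemble the data. The differential operators
$\bHg, \bHg^*, \bFg, \bFg^*$ have already been constructed in the preceding subsections for manifolds of constant sectional curvature, and the computations in \propref{prop:2.8}, \propref{prop:2.9} and the subsequent lemmas established the full set of exactness relations \eqref{eq:exact_relations}. These are precisely the algebraic identities required by the framework of \cite{KL21a} to define the double bilaplacian
\[
\Bg = \bHg\bHg^* + \bHg^*\bHg + \bFg^*\bFg + \bFg\bFg^*
\]
and to get orthogonality between the summands. I would pair them with the integration-by-parts formulas \eqref{eq:ibp}, which identify the correct natural boundary conditions $\ker(\PttD,\frakT)$, $\ker(\PnnD,\frakT^*)$, and $\ker(\PntD,\frakF^*)$ as the ones that make the image spaces mutually $L^2$-orthogonal. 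The symmetry conditions $(\bHg\psi^T)^T = \bHg\psi$ and $(\bFg\psi^T)^T = \bFg^*\psi$ then ensure that restricting to symmetric $(1,1)$-forms is compatible with the decomposition, so that Theorem~7.5 of \cite{KL21a} applies with $k=m=1$.

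Next, I would spell out how the four summands arise. Orthogonality of $\calS\EE^1(\M)$, $\calS\CC^1(\M)$ and $\calS\EC^1(\M)$ follows from the exactness relations \eqref{eq:exact_relations} together with \eqref{eq:ibp}: for example $\langle \bHg f, \bHg^*\psi\rangle$ reduces via integration by parts to $\langle \bHg\bHg f, \psi\rangle$ plus boundary terms that vanish because $f\in\ker(\PttD,\frakT)$ and $\psi\in\ker(\PnnD,\frakT^*)$, and then $\bHg\bHg=0$. The remaining piece $\calS\BH^1(\M)$ is the orthogonal complement, which by definition is $\Theta^1(\M)\cap\ker(\bHg,\bHg^*,\bFg)$. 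The fact that it is \emph{finite-dimensional} and consists of smooth sections is the content of the regular ellipticity of $\Bg$ supplemented with the boundary operators $\frakB_\TT$ or $\frakB_\NN$ proved in \cite{KL21a}, together with the standard elliptic regularity it entails; this also yields the $W^{s,p}$ version of the decomposition with potentials two derivatives smoother.

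Finally, for the further splitting \eqref{eq:decompositon_of_symmetric_biharmonic}, I would again quote Theorem~7.9 of \cite{KL21a}: once one has a Hodge-type decomposition and a regular elliptic biharmonic problem with complementary sets of boundary conditions (tangential $\calS\BH^1_{\TT}$ versus normal $\calS\BH^1_{\NN}$), the biharmonic module splits orthogonally into a part lying in the image of $\bHg$ together with the symmetrized image of $\bFg^*$ (dually for $\bHg^*$) and its orthogonal complement. The main obstacle, and the only thing that would require care beyond bookkeeping, is making sure the boundary data listed in the definitions of $\calS\EE^1, \calS\CC^1, \calS\EC^1$ match exactly the conditions under which the abstract integration-by-parts pairing in \eqref{eq:ibp} kills the boundary contributions; everything else is a mechanical translation from the general $(k,m)$ statement of \cite{KL21a} to the $(1,1)$-symmetric case at hand.
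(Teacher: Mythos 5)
Your proposal matches the paper's own argument: the paper likewise establishes the exactness relations \eqref{eq:exact_relations} in the preceding subsections and then deduces the theorem directly from Theorems~7.5 and 7.9 of \cite{KL21a}. Your additional remarks on how the integration-by-parts formulas \eqref{eq:ibp} and the boundary conditions yield the mutual orthogonality of the summands are consistent with that framework and simply spell out what the citation packages.
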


A comment on notations: in $\calS\EE^1$, $\calS\CC^1$ and $\calS\EC^1$, the symbol $\calS$ stands for ``symmetric", whereas the symbols $\calE$ and $\calC$ stand for ``exact" and ``coexact", in analogy with the standard notation in Hodge theory \cite{Sch95b}. The symbol $\BH$ stands for ``biharmonic", which in the present context refers to the kernel of all four second-order operators. 
The submodules $\calS\BH_{\NN}^1(\M)$ and $\calS\BH_{\TT}^1(\M)$ are the kernel of the two sets of regular elliptic operators $\frakB_{\NN}$ and $\frakB_{\TT}$ defined in Section~2. As a result, they are finite dimensional and consist only of smooth sections.

\section{Saint-Venant compatibility}

By \lemref{lemma:2.6},  if $\sigma\in\Theta^1(\M)$ satisfies $\sigma=\calL_Y\g$ for some vector field $Y\in\VF(\M)$ then 
\beq
\derivAtZero \Rm_{\g(\sigma,t)} = \calL_Y\Rm_\g. 
\label{eq:SV}
\eeq
This equation is known as the Saint-Venant compatibility equation \cite{Cal61};
for $\Rm_\g=0$ it reduces to the condition that a Lie derivative of the metric is in the kernel of the map
\[
\sigma\mapsto \derivAtZero \Rm_{\g(\sigma,t)}.
\]

A natural question is whether the converse is also true: Suppose that  $\sigma\in\Theta^1(\M)$ satisfies \eqref{eq:SV} for some $Y\in\VF(\M)$; does it imply that $\sigma=\calL_X\g$ for some $X\in\VF(\M)$?
This question was answered affirmatively in the smooth category for closed, simply-connected symmetric spaces \cite{GG88}; \cite{CCGK07,GK09} provide an answer in $L^2$-regularity for simply-connected Euclidean domains,  and obtain a Hodge-like decomposition for $L^2\Theta^1(\M)$. 

The next theorem improves these results, removing both the regularity and topological assumption, as well as the assumption of a Euclidean domain.
As suggested by \propref{prop:2.8}, for spaces of constant sectional curvature, the Saint-Venant compatibility can be reformulated such that any Lie derivative of the metric lies in the kernel of $\bHg$.

\begin{theorem}
\label{thm:3.1}
Let $(\M,\g)$ be a compact Riemannian manifold with boundary having constant sectional curvature. Then, $\sigma\in\Theta^1(\M)$ satisfies
\[
\sigma = \calL_Y\g
\]
for some vector field $Y\in\VF(\M)$
if and only if
\beq
\bHg\sigma = 0
\Textand
\sigma \perp \calS\BH_\NN^1(\M).
\label{eq:SV_conditions}
\eeq
\end{theorem}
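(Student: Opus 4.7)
The plan hinges on the $L^2$-orthogonal decomposition of $\Theta^1(\M)$ provided by \thmref{thm:3.11}, together with the identifications $\bHg f = \tfrac12\calL_{(df)^\#}\g$ from \eqref{eq:calL_df} and $\bFg^*\lambda + (\bFg^*\lambda)^T = \calL_{(\delta\lambda)^\#}\g$ from \propref{prop:2.9}, which recognize the ``exact'' and ``mixed'' components of the decomposition as Lie derivatives of the metric.

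For the \emph{if} direction, assume $\bHg\sigma = 0$ and $\sigma\perp\calS\BH_\NN^1(\M)$, and decompose via \thmref{thm:3.11}:
\[
\sigma = \bHg f + \bHg^*\psi + (\bFg^*\lambda + (\bFg^*\lambda)^T) + \beta, \qquad \beta\in\calS\BH^1(\M),
\]
with $f, \psi, \lambda$ satisfying the boundary conditions listed there. Applying $\bHg$ and using the exactness relations \eqref{eq:exact_relations} together with $\bHg\beta = 0$ collapses everything to $\bHg\bHg^*\psi = 0$; pairing against $\psi$ via \eqref{eq:ibp}---whose boundary terms vanish thanks to $\PnnD\psi = \frakT^*\psi = 0$---yields $\|\bHg^*\psi\|^2 = 0$, hence $\bHg^*\psi = 0$. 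Next, write $\beta = \beta_1 + \beta_2$ via \eqref{eq:decompositon_of_symmetric_biharmonic}, with $\beta_1\in\calS\BH^1_{\bHg+(\bFg^*+(\bFg^*(\cdot))^T)}(\M)$ and $\beta_2\in\calS\BH_\NN^1(\M)$; the orthogonality hypothesis on $\sigma$ and the orthogonal direct sum force $\beta_2 = 0$. Each surviving piece is a Lie derivative of $\g$: the $\bHg f$ and $\bFg^*\lambda + (\bFg^*\lambda)^T$ pieces by the identifications above, and $\beta_1 = \bHg g + (\bFg^*\mu + (\bFg^*\mu)^T)$ by its membership in $\image\bHg + \image(\bFg^* + (\bFg^*(\cdot))^T)$. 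Summing, $\sigma = \calL_Y\g$ with $Y = \tfrac12(d(f + g))^\# + (\delta(\lambda + \mu))^\#$.

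For the \emph{only if} direction, \propref{prop:2.8} gives $\bHg\calL_Y\g = 0$ immediately. For the orthogonality, fix $\alpha\in\calS\BH_\NN^1(\M)$. The symmetry $\alpha^T = \alpha$ reduces $\langle\calL_Y\g,\alpha\rangle$ to $2\langle\dgV Y^\flat,\alpha\rangle$; integrating by parts produces a boundary contribution controlled by $\PntD\alpha$ and $\PnnD\alpha$, both of which vanish ($\PnnD\alpha = 0$ by definition, $\PntD\alpha = (\PtnD\alpha)^T = 0$ by symmetry), so the pairing reduces to $2\langle Y^\flat, \deltagV\alpha\rangle$. The remaining task is to show $\omega := \deltagV\alpha = 0$: from $\bFg\alpha = 0$ and $\bHg^*\alpha = 0$ the $1$-form $\omega$ satisfies $d\omega = 0$ and $\delta\omega = 0$; from $\frakT^*\alpha = 0$ combined with $\PtnD\alpha = 0$ its normal component at $\dM$ vanishes; and the full regular-elliptic structure of $\frakB_\NN$ established in \cite{KL21a}, together with the remaining condition $\frakF\alpha = 0$, forces $\omega \equiv 0$.

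The principal obstacle is this last step---verifying $\deltagV\alpha = 0$ for every $\alpha\in\calS\BH_\NN^1(\M)$, which is the analog of the transverse property of TT tensors in Berger--Ebin-type decompositions. A tangentially harmonic $1$-form need not vanish on a topologically non-trivial manifold with boundary, so the argument must carefully combine all four defining conditions $\bHg\alpha = \bHg^*\alpha = \bFg\alpha = 0$ with the normal boundary data and the regular ellipticity from \cite{KL21a}.
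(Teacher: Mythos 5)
Your forward direction is essentially the paper's proof: the same decomposition from \thmref{thm:3.11}, the same elimination of the $\bHg^*\psi$ term by integration by parts against the boundary conditions $(\PnnD,\frakT^*)\psi=0$ (the paper arranges this slightly differently, pairing $\sigma$ against $\bHg^*\beta$ via orthogonality rather than first applying $\bHg$ to the decomposition, but the content is identical), the same use of \eqref{eq:decompositon_of_symmetric_biharmonic} to kill the $\calS\BH^1_\NN$-component, and the same identifications of the surviving pieces as Lie derivatives.

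Your reverse direction, however, departs from the paper and is where the gap sits. The paper avoids the question of whether $\deltagV\alpha$ vanishes entirely: it Hodge-decomposes $Y^\flat=df+\delta\lambda$, writes $\calL_Y\g=2\bHg f+\bFg^*\lambda+(\bFg^*\lambda)^T$ via \eqref{eq:calL_df} and \propref{prop:2.9}, and concludes orthogonality to $\calS\BH^1_\NN(\M)$ directly from the decomposition \eqref{eq:decompositon_of_symmetric_biharmonic}. Your route instead reduces matters to showing $\omega:=\deltagV\alpha=0$ for every $\alpha\in\calS\BH^1_\NN(\M)$, and you leave precisely this step as an appeal to ``the full regular-elliptic structure,'' which is not an argument. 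The claim is true and the paper in fact proves it later (Corollary~\ref{cor:1_symmetric_biharmonic_fields}, via Lemma~\ref{lem:boundary_data_delta}), but the mechanism is more concrete than you suggest, and your stated worry about topologically nontrivial manifolds is a red herring: the conditions $\frakT^*\alpha=0$ and $\frakF\alpha=0$, combined with $\PnnD\alpha=\PtnD\alpha=0$ and the commutation relations of \cite{KL21a}, yield the vanishing of the \emph{entire} boundary trace $\deltag\alpha|_{\dM}=0$ --- the condition $\frakF\alpha=0$ delivers the tangential part $\PttD\deltagV\alpha=0$, not merely additional normal data. Once $\omega$ is closed, coclosed, and vanishes identically on $\dM$, it is zero by uniqueness of the Dirichlet problem for the Hodge Laplacian \cite[Thm.~3.4.10]{Sch95b}, with no cohomological obstruction. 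So your argument can be completed, but as written it asserts rather than proves its pivotal step; if you want to keep this route, you must carry out the boundary computation extracting $\PttD\deltagV\alpha=0$ from $\frakF\alpha=0$. Otherwise the paper's detour through the Hodge decomposition of $Y^\flat$ is the cheaper path.
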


\begin{proof}
Let $\sigma$ satisfy \eqref{eq:SV_conditions}.
By \thmref{thm:3.11}, every $\sigma\in\Theta^1(\M)$ decomposes orthogonally into 
\beq
\sigma = \bHg \alpha + \bHg^*\beta +\bFg^*\lambda + (\bFg^*\lambda)^T + \kappa,
\label{eq:combine3}
\eeq
where $\alpha\in C^\infty(\M)\cap \ker(\PttD,\frakT)$, $\beta\in \Theta^2(\M)\cap \ker({\PnnD},{\frakT}^*)$, $\lambda \in \Omega^{2,0}(\M)\cap \ker({\PntD},{\frakF^*})$ and $\kappa\in \calS\BH^1(\M)$. 
By the orthogonality of the deocmposition,
\[
\bra\sigma ,\bHg^*\beta\ket = \bra\bHg^*\beta,\bHg^*\beta\ket.
\]
Integrating the left-hand side by parts using \eqref{eq:ibp} and taking into account that $\beta\in\ker(\PnnD,\frakT^*)$,
we obtain that $\bra\bHg\sigma ,\beta\ket = \bra\bHg^*\beta,\bHg^*\beta\ket$. Since $\bHg\sigma=0$,  it follows that $\bHg^*\beta=0$.

Next, since $\sigma \perp \calS\BH_\NN^1(\M)$, 
it follows from the decomposition \eqref{eq:decompositon_of_symmetric_biharmonic} of the biharmonic module that
\[
\kappa = \bHg \vp + \bFg^*\mu + (\bFg^*\mu)^T,
\]
for some $\vp\in C^\infty(\M)$ and $\mu\in \Omega^{2,0}(\M)$. Combining with \eqref{eq:combine3},
\[
\sigma = 2\, \bHg f + \bFg^*\eta + (\bFg^*\eta)^T,
\]
for some $f\in C^\infty(\M)$ and $\eta\in \Omega^{2,0}(\M)$. 
It follow from \eqref{eq:calL_df} and \propref{prop:2.9} that
\[
\sigma = \calL_Y\g
\qquad
\text{for}
\qquad
Y = (df + \delta\eta)^\#,
\]
which completes the first part of the proof.

In the other direction, let $\sigma=\calL_Y\g$. By \propref{prop:2.8},
\[
\bHg \sigma =0.
\]
As for the orthogonality to $\calS\BH^1_{\NN}(\M)$,
the Hodge decomposition for scalar 1-forms yields that $Y^\flat$ can be written in the form
\[
Y^\flat = df + \delta\lambda,
\]
for some $f\in C^{\infty}(\M)$ and $\lambda\in\Omega^{2,0}(\M)$. Using once again \eqref{eq:calL_df} and \propref{prop:2.9}, it follows that
\[
\calL_Y\g = 2\, \bHg f+\bFg^*\lambda+(\bFg^*\lambda)^T,
\]
which by the decomposition \eqref{eq:decompositon_of_symmetric_biharmonic} is orthogonal to $\calS\BH^1_{\NN}(\M)$.
\end{proof}

Since the spaces in the decompositions of \thmref{thm:3.11} are closed in any Sobolev regularity,
we may reformulate \thmref{thm:3.1} at lower regularity:

\begin{theorem}
\label{thm:kernel_H}
Let $(\M,\g)$ be a compact Riemannian manifold with boundary having constant sectional curvature. Then, for every $s\in\mathbb{N}\cup\{0\}$ and $p\geq 2$, $\sigma\in W^{s,p}\Theta^1(\M)$ satisfies
\[
\sigma = \calL_Y\g
\]
for some vector field $Y\in W^{s+1,p}\VF(\M)$
if and only if
\[
\sigma\perp\calS\CC^1(\M)
\Textand
\sigma \perp \calS\BH_\NN^1(\M).
\]
\end{theorem}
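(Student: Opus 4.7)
The plan is to mirror the proof of \thmref{thm:3.1}, but replace the differential condition $\bHg\sigma=0$ by the weaker orthogonality condition $\sigma\perp\calS\CC^1(\M)$, and to invoke the Sobolev version of the decomposition stated at the end of \thmref{thm:3.11}. This works because $\calS\CC^1(\M)=\image\bHg^*|_{\Theta^2(\M)\cap\ker(\PnnD,\frakT^*)}$, so orthogonality against it is exactly the weak formulation of $\bHg\sigma=0$ obtained via the integration-by-parts identity \eqref{eq:ibp}.

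Assume $\sigma\in W^{s,p}\Theta^1(\M)$ is $L^2$-orthogonal to both $\calS\CC^1(\M)$ and $\calS\BH^1_{\NN}(\M)$. First I would apply the $W^{s,p}$ decomposition from \thmref{thm:3.11} to write
\[
\sigma = \bHg\alpha + \bHg^*\beta + \bFg^*\lambda + (\bFg^*\lambda)^T + \kappa,
\]
with $\alpha\in W^{s+2,p}C^\infty(\M)\cap\ker(\PttD,\frakT)$, $\beta\in W^{s+2,p}\Theta^2(\M)\cap\ker(\PnnD,\frakT^*)$, $\lambda\in W^{s+2,p}\Omega^{2,0}(\M)\cap\ker(\PntD,\frakF^*)$ and $\kappa\in\calS\BH^1(\M)$. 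Since the decomposition is $L^2$-orthogonal and $\bHg^*\beta\in\calS\CC^1(\M)$, pairing $\sigma$ with $\bHg^*\beta$ gives $\|\bHg^*\beta\|^2_{L^2}=\langle\sigma,\bHg^*\beta\rangle=0$, so the $\calS\CC^1$-component vanishes.

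Next, using the orthogonality $\sigma\perp\calS\BH^1_{\NN}(\M)$, the Sobolev version of the refined decomposition \eqref{eq:decompositon_of_symmetric_biharmonic} forces
\[
\kappa\in\calS\BH^1(\M)\cap\brk{\image\bHg+\image(\bFg^*+(\bFg^*(\cdot))^T)},
\]
so $\kappa=\bHg\varphi+\bFg^*\mu+(\bFg^*\mu)^T$ for some $\varphi$ and $\mu$ of the appropriate Sobolev regularity. (Here I'd note that $\calS\BH^1_{\NN}(\M)$ is finite-dimensional and smooth, so the regularity of $\varphi$ and $\mu$ can be arranged to match the ambient Sobolev class.) Collecting terms, $\sigma=2\bHg f+\bFg^*\eta+(\bFg^*\eta)^T$ with $f\in W^{s+2,p}$ and $\eta\in W^{s+2,p}\Omega^{2,0}(\M)$. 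Now \eqref{eq:calL_df} and \propref{prop:2.9} yield
\[
\sigma=\calL_Y\g,\qquad Y=(df+\delta\eta)^\#\in W^{s+1,p}\VF(\M).
\]

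For the converse, let $\sigma=\calL_Y\g$ with $Y\in W^{s+1,p}\VF(\M)$. The classical Hodge decomposition for scalar $1$-forms in Sobolev regularity gives $Y^\flat=df+\delta\lambda$ with $f\in W^{s+2,p}$ and $\lambda\in W^{s+2,p}\Omega^{2,0}(\M)$, hence by \eqref{eq:calL_df} and \propref{prop:2.9},
\[
\sigma=2\bHg f+\bFg^*\lambda+(\bFg^*\lambda)^T\in\calS\EE^1(\M)\oplus\calS\EC^1(\M).
\]
By the $L^2$-orthogonality of the decomposition, $\sigma\perp\calS\CC^1(\M)$ and $\sigma\perp\calS\BH^1(\M)\supset\calS\BH^1_{\NN}(\M)$. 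The main technical point — and the only real obstacle — is ensuring that the decomposition of \thmref{thm:3.11} and the refined decomposition \eqref{eq:decompositon_of_symmetric_biharmonic} genuinely extend to $W^{s,p}$ with matching regularity of potentials; this is precisely the content of the last sentence of \thmref{thm:3.11}, so it can be invoked as a black box.
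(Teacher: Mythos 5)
Your proposal is correct and follows essentially the same route as the paper: the paper proves this theorem simply by remarking that the argument of \thmref{thm:3.1} carries over verbatim at Sobolev regularity because all spaces in the decompositions of \thmref{thm:3.11} are closed in the $W^{s,p}$-topology, with $\sigma\perp\calS\CC^1(\M)$ serving as the weak form of $\bHg\sigma=0$. Your only slightly imprecise aside is attributing the regularity of the potentials $\varphi,\mu$ to the finite-dimensionality of $\calS\BH^1_{\NN}(\M)$ rather than to the Sobolev clause of \thmref{thm:3.11} itself, but you correctly invoke that clause as the black box in the end.
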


Due to the closedness of $\calS\CC^1(\M)$, the condition $\sigma\perp\calS\CC^1(\M)$ is equivalent to the distributive equation $\bHg\sigma=0$. For $s\ge2$, this equivalence is in the classical sense.

Following \cite{CCGK07,GK09}, we can restate \thmref{thm:kernel_H} in the following manner. Set
\[
\begin{gathered}
L(\M)=\{\calL_{X}\g~:~X\in\frakX(\M)\} \\
W^{s,p}L(\M)=\BRK{\calL_{X}\g~:~X\in W^{s+1,p}\frakX(\M)}.
\end{gathered}
\]
We obtain an $L^2$-orthogonal decomposition
\beq
W^{s,p}\Theta^1(\M)=W^{s,p}\calS\CC^1(\M)\oplus W^{s,p}L(\M)\oplus\calS\BH^1_{\NN}(\M),
\label{eq:saint_venant_decompositon}
\eeq
where the splitting is both algebraic and topological, and each space is closed in the $W^{s,p}$-topology.
 
Equation \eqref{eq:saint_venant_decompositon} is a direct generalization of the Hodge-like decomposition in \cite{GK09} for Euclidean domains. In their notation, with $(\M,\g)$ a Euclidean domain, $\calS\BH^1_\NN(\M)=\bbK$, where
\[
\bbK = \Theta^1(\M) \cap \ker{(\deltag,\bHg,\PnD)}
\]
(see \corrref{cor:1_symmetric_biharmonic_fields} for the equivalence). In their version of the decomposition, \cite{GK09} points out the ability to decompose $L(\M)$ further to include an element whose potential vanishes on the boundary. Although this is not apparent at first glance, it is worth pointing out that our result generalizes this as well. Moreover, our decomposition differentiates whether the potential of the Lie derivative is a gradient field or the codifferential of a 1-form:

\begin{proposition}
In any regularity, $L(\M)$ further splits  into
\[
L(\M)=\S\EE^1(\M)\oplus \S\CE^1(\M)\oplus\S\BH^1_{\bHg+(\bFg^{*}+(\bFg^{*})^T)}(\M).
\]
Moreover,
\[
\S\EE^1(\M),\ \S\CE^1(\M)\subseteq\BRK{\calL_{X}\g ~:~X\in\frakX(\M),\ X|_{\dM}=0}.
\]
\end{proposition}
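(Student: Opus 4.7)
The statement splits naturally into the decomposition claim and the ``Moreover'' refinement, which I address in turn.

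For the decomposition, the plan is to combine \thmref{thm:3.11} with the argument of \thmref{thm:3.1}. Each of the three summands lies in $L(\M)$ by direct inspection: $\bHg f = \tfrac12\calL_{(df)^{\sharp}}\g$ by \eqref{eq:calL_df}, $\bFg^*\lambda+(\bFg^*\lambda)^T = \calL_{(\delta\lambda)^{\sharp}}\g$ by \propref{prop:2.9}, and an element $\bHg\vp+\bFg^*\mu+(\bFg^*\mu)^T$ of $\calS\BH^1_{\bHg+(\bFg^*+(\bFg^*(\cdot))^T)}(\M)$ equals $\calL_{(\tfrac12 d\vp+\delta\mu)^{\sharp}}\g$. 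For the reverse inclusion, given $\sigma=\calL_Y\g$, I would expand $\sigma = \bHg f+\bHg^*\beta+\bFg^*\lambda+(\bFg^*\lambda)^T+\kappa$ via \thmref{thm:3.11}; testing against $\bHg^*\beta$, integrating by parts via \eqref{eq:ibp}, and invoking $\bHg\sigma=0$ (\propref{prop:2.8}) together with the boundary conditions on $\beta$ forces $\bHg^*\beta=0$. Since $\sigma\perp\calS\BH^1_{\NN}(\M)$ by \thmref{thm:3.1} and the $\calS\EE^1,\calS\EC^1$-summands are orthogonal to all of $\calS\BH^1(\M)$, the residual $\kappa$ lies in $\calS\BH^1(\M)\cap\calS\BH^1_{\NN}(\M)^{\perp}=\calS\BH^1_{\bHg+(\bFg^*+(\bFg^*(\cdot))^T)}(\M)$, yielding the claimed decomposition (algebraic and topological closedness being inherited from \thmref{thm:3.11}).

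For the refinement, the plan is to trace each summand's defining boundary conditions through to its generating vector field. On $\calS\EE^1(\M)$ this is immediate: $f\in\ker(\PttD,\frakT)$ unpacks to $f|_{\dM}=0$ and $\dr f|_{\dM}=0$, which together give $df|_{\dM}=0$ pointwise as a covector at every boundary point, whence $(df)^{\sharp}|_{\dM}=0$, and the generator $\tfrac12(df)^{\sharp}$ of $\bHg f = \tfrac12\calL_{(df)^{\sharp}}\g$ vanishes on $\dM$.

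On $\calS\EC^1(\M)$, which constitutes the principal obstacle, the goal is to show that $\PntD\lambda=0$ and $\frakF^*\lambda=0$ together force $\delta\lambda|_{\dM}=0$ pointwise in $T^*\M|_{\dM}$; the claim then follows from \propref{prop:2.9}. The strategy is to expand $\delta\lambda=-\sum_i i_{E_i}\nabg_{E_i}\lambda$ in a Fermi frame $\{\frakn,T_1,\ldots,T_{d-1}\}$ adjacent to $\dM$ and analyze the tangential and normal parts of $\delta\lambda|_{\dM}$ separately. The Dirichlet-type condition $\PntD\lambda=i_{\frakn}\lambda|_{\dM}=0$ controls one of these components up to boundary-tangential derivatives of the vanishing data and up to contractions of the tangential part $\PttD\lambda$ with the Weingarten map of $\dM$; the definition \eqref{eq:1st_order_proj} of $\frakF^*$ is arranged to package precisely those Weingarten and normal-derivative combinations, so the condition $\frakF^*\lambda=0$ cancels the remaining terms. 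The main technical effort lies in this explicit frame computation in arbitrary dimension and codimension; as a sanity check, on the $2$-dimensional unit disk with $\lambda=f\,d\Vol$ the conditions reduce to $f|_{\dM}=0$ and $\dr f|_{\dM}=0$, which make $\delta\lambda=-f_x\,dy+f_y\,dx$ vanish on $\dM$, as expected.
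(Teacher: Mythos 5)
Your overall architecture matches the paper's. For the splitting itself the paper simply compares the decomposition \eqref{eq:saint_venant_decompositon} with \thmref{thm:3.11} (using \eqref{eq:decompositon_of_symmetric_biharmonic} to split the biharmonic module); your re-run of the Theorem~\ref{thm:3.1} argument arrives at the same place and is correct, just more verbose than necessary. The $\calS\EE^1$ half of the ``Moreover'' clause is handled identically in both: $\ker(\PttD,\frakT)$ unpacks to $f|_{\dM}=0$ and $\PntD df=0$, hence $df|_{\dM}=0$.

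The one genuine issue is the $\calS\EC^1$ case, which you correctly identify as ``the principal obstacle'' and then do not actually resolve. You reduce the claim to showing that $\PntD\lambda=0$ and $\frakF^*\lambda=0$ force $\delta\lambda|_{\dM}=0$, and you assert that a Fermi-frame expansion will show the Weingarten and normal-derivative terms are ``arranged to cancel'' --- but this cancellation is precisely the content of the step, and asserting that the definition of $\frakF^*$ ``is arranged to package precisely those combinations'' is circular as written. The paper closes this gap with three specific identities from \cite{KL21a}: since $\PnnD\lambda=0$ automatically for $\lambda\in\Omega^{2,0}(\M)$, one has $\PntD\delta\lambda=-\delta\PntD\lambda=0$ directly, and then the commutation formulas
\[
\PnnD\dgV\lambda=\PntD\nabg_{\frakn}\lambda
\Textand
\deltagD\PttD\lambda=\PttD\delta\lambda+\PntD\nabg_{\frakn}\lambda
\]
combine with $\frakF^*\lambda=0$ to give $\PttD\delta\lambda=0$; note that the normal-derivative terms cancel between the two identities, so no Weingarten contribution survives and no frame computation is needed. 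Your plan would presumably reproduce these identities in coordinates, and your two-dimensional sanity check is consistent with them, but as submitted the proof of the second inclusion is incomplete at its only nontrivial point.
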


\begin{proof}
The first part follows from a direct comparison of \eqref{eq:saint_venant_decompositon} and \thmref{thm:3.11}. As for the second part, let 
\[
\bHg f =\half\calL_{{(df)}^{\sharp}}\g\in \S\EE^1(\M),
\qquad
f\in\ker(\PttD,\frakT).
\]
From \eqref{eq:1st_order_proj}, this amounts to $\PttD f=0$ and $\PntD df=0$, hence $df|_{\dM}=0$. 

Next, let
\[
\bFg^{*}\lambda+(\bFg^{*}\lambda)^T=\calL_{(\delta\lambda)^{\sharp}}\g\in\S\CE^1(\M)
\qquad
\lambda\in\ker{(\PntD,\frakF^{*})}. 
\]
We need to show that $\PntD\delta\lambda=0$ and $\PttD\delta\lambda=0$.
By \cite[Eq. 4.18d]{KL21a}, using the fact that $\PnnD\lambda=0$,
\[
\PntD\delta\lambda=-\delta\PntD\lambda=0.
\]
From \eqref{eq:1st_order_proj}, using once again the fact that  $\PnnD\lambda=0$, we find that $\frakF^{*}\lambda=0$ amounts to,
\[
\PttD\delta\lambda=\PnnD\dgV\lambda-\deltagD\PttD\lambda.
\]
From the commutation formulas in \cite[Lemmas~4.9-4.10]{KL21a} and the fact that $\PntD\lambda=0,\PnnD\lambda=0$, 
\[
\PnnD\dgV\lambda=\PntD\nabg_{\frakn}\lambda
\Textand 
\deltagD\PttD\lambda=\PttD\delta\lambda + \PntD\nabg_{\frakn}\lambda,
\]
which yields that $\PttD\delta\lambda=0$. 
\end{proof}

If $\M$ is simply-connected and locally-flat, then the orthogonality to the biharmonic module holds trivially, as:

\begin{proposition}
\label{prop:locally_flat_top}
Let $(\M,\g)$ be simply-connected and locally-flat. Then, $\sigma\in\Theta^1(\M)$ satisfies 
\[
\sigma = \calL_Y\g
\]
for some vector field $Y\in\VF(\M)$ if and only if 
\[
\bHg\sigma = 0.
\]
Comparing with \thmref{thm:3.1},
\[
\calS\BH^1_\NN(\M) = \{0\}.
\]
\end{proposition}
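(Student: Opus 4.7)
The plan is to reduce this statement to \thmref{thm:3.1} by establishing that, under the simply-connected and locally-flat hypotheses, the finite-dimensional space $\calS\BH^1_\NN(\M)$ is trivial. Once this is shown, the orthogonality condition in \eqref{eq:SV_conditions} becomes vacuous, and the characterization of $\sigma = \calL_Y\g$ collapses to the single requirement $\bHg\sigma = 0$.

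To prove $\calS\BH^1_\NN(\M) = \{0\}$, I would pick an arbitrary $\beta \in \calS\BH^1_\NN(\M)$ and combine two ingredients. First, by the defining condition \eqref{eq:BHTTNN}, we have $\beta \in \Theta^1(\M)$ with $\bHg\beta = 0$. Second, on a simply-connected, locally-flat manifold the classical Saint-Venant lemma applies: every symmetric $(1,1)$-form in the kernel of $\bHg$ is a Lie derivative of $\g$. For simply-connected Euclidean domains this is standard (see e.g., \cite{CCGK07,GK09}); the general simply-connected locally-flat case can be reduced to the Euclidean one via the developing map, which provides a globally defined isometric immersion into $\R^d$, or alternatively via the standard path-integration argument, which yields a well-defined primitive precisely because closed 1-forms on simply-connected manifolds are exact.

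Once a vector field $X \in \VF(\M)$ with $\beta = \calL_X\g$ has been produced, the conclusion is immediate from the $L^2$-orthogonal decomposition \eqref{eq:saint_venant_decompositon}: since $L(\M)$ and $\calS\BH^1_\NN(\M)$ are orthogonal summands and $\beta$ lies in both, $\bra\beta,\beta\ket = 0$, and hence $\beta = 0$. The main obstacle in this plan is the classical Saint-Venant step itself, i.e., producing the global primitive $X$; the local integration along a path is routine, but showing that the local primitives assemble into a single $X$ defined on all of $\M$ is exactly where simply-connectedness enters, trivializing the cohomological obstruction associated with the closed 1-form built from $\beta$ and its first covariant derivative.
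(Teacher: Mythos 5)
Your framing of the easy parts is correct and coincides with the paper's: the ``only if'' direction is \propref{prop:2.8}, and once every $\beta\in\calS\BH^1_\NN(\M)$ is known to lie in $L(\M)$, the $L^2$-orthogonality of $L(\M)$ and $\calS\BH^1_\NN(\M)$ in \eqref{eq:saint_venant_decompositon} gives $\bra\beta,\beta\ket=0$, hence $\calS\BH^1_\NN(\M)=\{0\}$ --- which is exactly what the paper means by ``comparing with \thmref{thm:3.1}''.

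The genuine gap is that you have black-boxed the one implication the proposition is actually about: that $\bHg\sigma=0$ forces $\sigma=\calL_Y\g$ on a simply-connected locally-flat manifold. This is not a corollary of \thmref{thm:3.1}; it is the hard half of the statement, and it is what the paper's proof consists of. Concretely, after identifying $\M$ with a Euclidean domain, the paper writes $\bHg\sigma=\dg\dgV\sigma=0$, uses exactness of closed forms to produce $A\in\Omega^{0,2}(\M)$ with $\dgV\sigma=\dg A$, deduces $\dgV A=0$ from the symmetry $\G_V\sigma=0$, integrates again to get $A=\dgV\eta$, and a third application of exactness yields $\sigma=\dg\eta+\dgV\omega$, which symmetrizes to $\calL_Y\g$ with $Y^\flat=\tfrac12(\omega+\eta^T)$. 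Your citations \cite{CCGK07,GK09} cover only three-dimensional Euclidean domains, whereas the proposition is stated in arbitrary dimension, and ``the standard path-integration argument'' is named but never executed --- in a blind proof this is precisely the step that must be written down. (The plan itself is reasonable: a Ces\`aro--Volterra-type double integration needs only $H^1_{dR}(\M)=0$ at both steps, which is arguably cleaner than the paper's route, one step of which asks a closed scalar $2$-form to be exact. But as submitted, the proposal contains no proof of this implication.) One further caution: the developing map of a simply-connected flat manifold with boundary is in general only an isometric immersion, not an embedding onto a domain; this is harmless for a path-integration argument, but it should be stated rather than asserted as an identification with a Euclidean domain.
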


\begin{proof}
The ``only if" follows from \propref{prop:2.8}. As for the ``if" part, note first that every simply-connected locally-flat manifold is isometric to a Euclidean domain. Since
\[
\bHg\sigma = \dg\dgV\sigma = 0,
\]
it follows from the de-Rham cohomology that there exists an $A\in\Omega^{0,2}(\M)$, such that
\[
\dgV\sigma = \dg A.
\]
Since $\sigma$ is symmetric, $\G_V\sigma=0$, and since $\G_V$ anti-commutes with $\dgV$,
\[
0 = \dgV\G_V\sigma = -\G_V\dg A.
\]
By \cite[Lemma.~3.7]{KL21a}, and since $\G_V$ annihilates every $(0,m)$-form,
\[
\G_V\dg A = \dg\frakG_V A+\frakG_V\dg A = \dgV A,
\]
from which follows that
\[
\dgV A = 0.
\] 
By the de-Rham cohomology, there exists an $\eta\in\Omega^{0,1}(\M)$, such that $A = \dgV \eta$, namely,
\[
0 = \dgV\sigma - \dg A = \dgV(\sigma - \dg \eta).
\]
Once more application of the de-Rham cohomology implies the existence of $\omega\in\Omega^{1,0}(\M)$, such that
\[
\sigma = \dg\eta + \dgV\omega.
\] 
Since $\sigma$ is symmetric, we may symmetrize the right-hand side, yielding
\[
\sigma = \calL_Y\g,
\]
for
\[
Y^\flat = \tfrac12(\omega + \eta^T). 
\]
\end{proof}

The natural question is whether $\calS\BH^1_\NN(\M)=\BRK{0}$ for $\M$ simply-connected and $\g$ having constant sectional curvature.
Calabi \cite{Cal61} builds upon this very proof of the locally-flat case to prove that in manifolds with constant curvature without boundary, $\bHg\sigma=0$ implies $\sigma\in L(\M)$. The following theorem generalizes Calabi's result to manifolds with boundaries, albeit only for positive sectional curvature, using a different technique:

\begin{theorem}
If $(\M,\g)$ is simply-connected and has positive constant sectional curvature, then $\calS\BH^1_{\NN}(\M)=\BRK{0}$.
\end{theorem}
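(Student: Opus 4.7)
The strategy leverages the $L^2$-orthogonality $L(\M)\perp\calS\BH^1_\NN(\M)$ from \thmref{thm:3.1}: if one can establish the inclusion $\calS\BH^1_\NN(\M)\subseteq L(\M)$, then any $\sigma\in\calS\BH^1_\NN(\M)$ is orthogonal to itself and must vanish. The task thus reduces to proving that every $\sigma\in\calS\BH^1_\NN(\M)$ is a Lie derivative of the metric.

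I would reduce the positive-curvature situation to the locally-flat simply-connected setting of \propref{prop:locally_flat_top} via the classical Riemannian cone construction. After normalizing $\kappa=1$, form
\[
\hat\M = [r_0,r_1]\times\M, \qquad \hat\g = dr^2 + r^2\g.
\]
Because $\g$ has constant sectional curvature $1$, the warped metric $\hat\g$ is flat; because $\M$ is simply-connected, so is $\hat\M$. Lift $\sigma$ to the cone by $\hat\sigma = r^2\pi^*\sigma$, with $\pi:\hat\M\to\M$ the radial projection, and verify through warped-product calculus that $\mathbf{H}_{\hat\g}\hat\sigma = 0$. In the flat setting the commutation identities of \eqref{eq:kappa5} reduce $\mathbf{H}_{\hat\g}$ restricted to symmetric $(1,1)$-forms to the composition $\dg\dgV$, so the verification becomes a direct warped-product computation relating the cone operators to those on the base, using $\dg\dgV\sigma = \kappa\,\g\wedge\sigma$ on $\M$ (which is the content of $\bHg\sigma = 0$).

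Next, apply \propref{prop:locally_flat_top} on $\hat\M$ to obtain $\hat Y\in\VF(\hat\M)$ with $\hat\sigma = \calL_{\hat Y}\hat\g$. Decomposing $\hat Y = f\,\partial_r + V$ with $V$ tangent to the slices, the vanishing of the pure-radial and mixed components of $\hat\sigma$ (which carries only slice-tangential components) imposes ordinary differential equations in $r$ for $f$ and $V$; integrating them, together with the identity $\calL_{\hat Y}\hat\g(X,Y) = r^2\calL_V\g(X,Y)$ on horizontal vectors, descends $V$ to a vector field $Y\in\VF(\M)$ satisfying $\sigma = \calL_Y\g$. Hence $\sigma\in L(\M)\cap\calS\BH^1_\NN(\M)$, and the orthogonality of \thmref{thm:3.1} forces $\sigma=0$.

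The principal obstacle is the first step: verifying $\mathbf{H}_{\hat\g}\hat\sigma = 0$ from $\bHg\sigma = 0$. The cone covariant derivative mixes radial and tangential directions with $1/r$ factors, so each application of the cone $\dg$ or $\dgV$ to $r^2\pi^*\sigma$ produces correction terms arising from the warped-product structure; the crux is that these correction terms precisely absorb the $-\kappa\,\g\wedge\sigma$ term present in $\bHg$ on the base. Positivity of the sectional curvature is essential, as it is exactly the regime in which $dr^2 + r^2\g$ is flat and thus \propref{prop:locally_flat_top} is available on $\hat\M$; the descent step is algebraically clean but requires systematic bookkeeping of how $\calL_{\hat Y}\hat\g$ splits along the warped-product decomposition.
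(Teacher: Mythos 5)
Your overall strategy is exactly the paper's: realize $(\M,\g)$ as a level set inside the flat warped product $dr^2+r^2\g$ (the paper writes this as a tubular neighbourhood of the sphere in Euclidean space, $\frake=dr\otimes dr+c(r)\g$ with $c(r)=(R+r)^2/R^2$), lift $\sigma$ by the conformal factor, show that the flat-space operator annihilates the lift so that \propref{prop:locally_flat_top} applies, descend the resulting Lie derivative to $\M$, and conclude from the $L^2$-orthogonality $L(\M)\perp\calS\BH^1_{\NN}(\M)$ in \thmref{thm:3.1} that $\sigma\perp\sigma$, hence $\sigma=0$. The first half (lifting and checking $H_{\frake}\bar\sigma=0$) is carried out in the paper via the commutation relations between $H_\frake$ and the projections $\Ptt,\Ptn,\Pnn$ onto the level sets, and your description of how the warped-product correction terms absorb $-\kappa\,\g\wedge\sigma$ is consistent with that computation.

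The genuine gap is in the descent. The identity you invoke, $\calL_{\hat Y}\hat\g(X,Y)=r^2\calL_V\g(X,Y)$ on horizontal vectors, is false whenever the radial component $f$ of $\hat Y$ is nonzero: differentiating the warping factor produces an extra term $2rf\,\g$, which is precisely the second-fundamental-form contribution $2\bar X^r\bar{\frakh}$ in the paper's notation. Your ODEs from the radial and mixed components only force $f$ to be \emph{constant} (the paper gets this from $\calL_{\bar X}dr=d\bar X^r=0$); they do not force $f=0$. And this leftover term cannot be absorbed into a Lie derivative on the base: a nonzero constant multiple of $\g$ is never of the form $\calL_X\g$ on a space of nonzero constant curvature, since $\bHg\calL_X\g=0$ while $\bHg(c\g)=-2c\,\Rm_\g\neq 0$ for $c\neq 0$, $\kappa\neq 0$. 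The paper closes this hole by restricting to $\M$, writing $\sigma=\calL_{X}\g+2\bar X^r\bar{\frakh}|_\M$ with $\bar{\frakh}|_\M\propto\g$, and applying $\bHg$ together with $\bHg\sigma=0$ to force $\bar X^r=0$. Without this (or an equivalent) final step your argument does not yield $\sigma\in L(\M)$; with it, it coincides with the paper's proof.
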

\begin{proof}
Since $(\M,\g)$ is simply-connected, it can be isometrically embedded in a closed sphere of radius $R$, which in turn can be isometrically embedded as a hypersurface in euclidean space.  
This setting can be realized as an isometric embedding $j:(\M,\g)\hookrightarrow(\Brk{0,1}\times \M,\frake)$ with a smooth distance function $r:I\times \M\rightarrow\mathbb{R}_{\geq 0}$
and a Euclidean metric $\frake$ of the form
\beq
\frake = dr\otimes dr+\bar{\g}.
\label{eq:exp_euc}
\eeq
The tensor $\bar{\g}\in \Theta^1(I\times \M)$ has only tangent parts, and on each level set of $r$ restricts to its intrinsic metric. The level sets of $r$ are spheres as well. Thus, the Riemannian metrics of the level sets are conformal to $(\M,\g)$,
\[
\bar{\g}(r,x)= c(r) \,\g(x),
\qquad
\text{where}\qquad
c(r)=\frac{(R+r)^2}{R^2}.  
\]
The second fundamental form of these level sets, $\bar{\frakh}\in\Theta^1(U)$ is
\[
\bar{\frakh}_{ij}(r,x)=\frac{R+r}{R^2}\,\frakg_{ij}(x),
\]
and the corresponding shape operator is
\[
\bar{S}=\frac{1}{R+r}\id.
\]
A direct calculation shows that,
\[
\begin{gathered}
\frac{c'(r)}{c(r)}=\frac{2}{R+r}
\qquad\text{hence}
\qquad
\frac{1}{R^2}-\frac{c'(r)}{c(r)}\frac{R+r}{R^2}=\kappa,
\end{gathered}
\]
where $\kappa$ is the constant curvature of $(\M,\g)$ (which in our convention is $-1/R^2$). 

Let $\sigma\in\calS\BH^1_{\NN}(\M)$ satisfy $\Hg\sigma=0$, and consider $\bar{\sigma}\in\Theta^1(I\times \M)$ given in semi-geodesic coordinates by
\[
\bar{\sigma}(r,x)=c(r)\,\sigma(x).
\]
It follows that $\bar{\sigma}$ has no normal components and
\[
\calL_{\dr}\bar{\sigma}=c'(r)\,\bar{\sigma}. 
\]

For $\e>0$, we denote by 
\[
\Ptt : \Theta^1(I\times\M) \to \Theta^1(\M)
\]
the pullback of a double form onto the level set $r^{-1}(\{\e\})$, which can be identified with $\M$. Similarly we define the boundary projections operator $\Ptn$ and $\Pnn$.
Let $\gEps = \Ptt\euc$ denote the pullback metric of $r^{-1}(\{\e\})$; as seen above, $\gEps=c(\e)\g$. 
Using the fact that the connection is invariant under constant conformal factors \cite[pp.~217]{Lee18} and that $\dg$ is determined  by the connection of $\g$, we find 
\[
H_{\gEps}=\Hg 
\qquad \dgEps=\dg 
\Textand \dgEpsV=\dgV.
\]  

A direct calculation using the commutations relations derived in \cite[Section~4]{KL21a} gives that $\bHg\sigma=0$ implies that $\Ptt H_\euc\bar{\sigma}=0$,
\[
\begin{split}
\Ptt H_{\frake}\bar{\sigma}&=H_{\gEps}\Ptt\bar{\sigma}+\frakh_{\e}\wedge\frakT_{\e}\bar{\sigma}
\\&=H_\g c(\e)\sigma+\frac{R+\e}{R^2}\g\wedge\brk{c'(\e)-\frac{c(\e)}{R+\e}}\sigma
\\&=c(\e)\brk{\Hg\sigma-\brk{\frac{1}{R^2} - \frac{c'(\e)}{c(\e)}\frac{R+\e}{R^2}}\,\g\wedge\sigma}
\\&=c(\e)\brk{H_\g\sigma-\kappa\,\g\wedge\sigma}
\\&=c(\e)\bHg\sigma=0. 
\end{split}
\]
$\Ptn H_\euc\bar{\sigma}=0$ and $\Pnn H_\euc\bar{\sigma}=0$ are proven in a similar fashion, which implies that
\[
H_\euc\bar{\sigma}=0.
\]
Since $I\times\M$ is a simply-connected flat space, we conclude from \propref{prop:locally_flat_top} that $\bar{\sigma}=\calL_{\bar{X}}\frake$ for some vector field $\bar{X}\in\frakX(I\times \M)$. Decompositing it into tangent and normal parts,
\[
\bar{X}=\bar{X}^{\parallel}+ \bar{X}^{r}\dr,
\]
and inserting expression \eqref{eq:exp_euc} for $\frake$, keeping in mind that $i_{\dr}\bar{\g}=0$, 
\[
\begin{split}
\bar{\sigma}&=\calL_{\bar{X}}(dr\otimes dr+\bar{\g})
\\&=\calL_{\bar{X}}dr\otimes dr+dr\otimes \calL_{\bar{X}}dr+\calL_{\bar{X}^{\parallel}+\bar{X}^{r}\dr}\bar{\g}
\\&=\calL_{\bar{X}}dr\otimes dr+dr\otimes \calL_{\bar{X}}dr+\calL_{\bar{X}^{\parallel}}\bar{\g}+\bar{X}^{r}\calL_{\dr}\bar{\g}. 
\end{split}
\]
$\calL_{\bar{X}^{\parallel}}\bar{\g}$ has no normal parts, since both $\bar{X}^{\parallel}$ and $\bar{\g}$ have no normal parts. Moreover, $\calL_{\dr}\bar{\g}=2\,\bar{\frakh}$, which also has no normal parts. Thus,  since $\bar{\sigma}$ has no normal components, we conclude that $\calL_{\bar{X}}dr=0$, i.e, $\bar{X}^{r}=const$. Therefore,
\[
\bar{\sigma}=\calL_{X^{\parallel}}\bar{\g}+2\,\bar{X}^{r}\bar{\frakh}
\]
Restricting to $\M$, setting $X=X^{\parallel}|_{\M}$ and imposing $\bHg\sigma=0$ yields that $\bar{X}^{r}=0$, hence
\[
\sigma=\calL_{X}\g.
\]
Comparing once again with \thmref{thm:3.1}, $\calS\BH^1_{\NN}(\M)=\BRK{0}$.
\end{proof}

\section{Equations of incompatible elasticity}

\subsection{Incompatible elasticity}
\label{sec:application}


Let $(\M,\g)$ and $(\bar{\M},\bar{\g})$ be smooth $d$-dimensional Riemannian manifolds. 
The manifold $\M$ is compact with a boundary, and it represents the body; 
the manifold $(\bar{\M}$ has no boundary and it represents space. 
For a configuration $f:\M\to\bar{\M}$, let
\[
W:T^*\M\otimes \fTN \to \R
\]
be an elastic energy density,  usually assumed to possess  symmetries. The stored energy associated with a configuration $f:\M\to\bar{\M}$ (in the absence of body forces or boundary constraints) is
\beq
\calE(f) = \int_\M W(df)\, \VolumeG.
\label{eq:energy}
\eeq
The Euler-Lagrange equations for the critical points of \eqref{eq:energy} can be formulated in terms of double forms. The resulting boundary-value problem for the stress field, $\sigma\in\Theta^1(\M)$ are  
\beq
\Cases{
\delfh \sigma = 0 & \text{in $\M$} \\
\Pnfh\sigma = 0 & \text{on $\partial\M$}.
}
\label{eq:equilbrium_sigma}
\eeq
where $\fh$ is the pullback metric on $\M$ induced by the critical point $f$ of the energy functional \eqref{eq:energy}. 

Equations \eqref{eq:equilbrium_sigma} form (in local coordinates) a system of $d$ differential equations for the $d(d+1)/2$ components of the stress $\sigma$, which are supplemented by algebraic constitutive relations (fiber derivatives of $W$), relating $\sigma$ to the metric $\fh$. 

If the space manifold $(\bar{\M},\bar{\g})$ is Euclidean, then a simply-connected Riemannian manifold
$(\M,G)$ can be isometrically-immersed in $(\bar{\M},\bar{\g})$ if and only if $\calR_G= 0$, where $\calR_{G}\in\End(\Lambda^2T\M)$ is the curvature operator of $G$. 
Together with this compatibility condition,
Eq.~\eqref{eq:equilbrium_sigma} forms a closed boundary-value problem for the stress $\sigma$, 
or equivalently, 
for the pullback metric $\fh$. Note how nonlinear the system is: in addition to the ``constitutive nonlinearity", i.e., the nonlinear relation between $\fh$ and $\sigma$, there is also a ``geometric nonlinearity", as the differential operators depend on the unknown pullback metric $\fh$.

We rewrite the system \eqref{eq:equilbrium_sigma} in the following form,
\beq
\delta^{\nabla^G}\sigma = 0
\qquad
\qquad
\bbP^\frakn_G \sigma = 0,
\label{eq:summary1}
\eeq
where $G\in\Theta^1(\M)$ is a metric related to the stress field $\sigma\in\Theta^1(\M)$ by a nonlinear, invertible strain-stress constitutive relation $\calA:\Theta^1(\M)\rightarrow\Theta^1(\M)$,
\beq
\sigma = \calA G
\label{eq:summary2}
\eeq
satisfying $\calA \g=0$ (as the reference metric of the body, $\g$, is its strain-free state).
This system is supplemented by the compatibility condition
\beq
\calR_G = \calR_{\calA^{-1}\sigma} = 0.
\label{eq:summary4}
\eeq

Eqs. \eqref{eq:summary1}--\eqref{eq:summary4} form a closed nonlinear system of equations for $\sigma$. In the small strain limit, linearizing about $\sigma=0$ (and correspondingly $G=\g$), \eqref{eq:summary1} reduces to
\beq
\delta^{\nabg}\sigma = 0
\qquad
\qquad
\bbP^\frakn_\g \sigma = 0.
\label{eq:summary5}
\eeq
The linearization of \eqref{eq:summary4} yields
\beq
\bHg\sigma = \calC\Rm_\g, 
\label{eq:summary6}
\eeq
where $\calC$ is a linear operator related to the linearization of the constitutive relation, namely to $d\calA_\g$. 
Eqs.~\eqref{eq:summary5},\eqref{eq:summary6} are of the form of the linearized stress equations described in the introduction. From \eqref{eq:H_on_the_metric}, $\Rm_\g\in\image{\bHg}$.

\subsection{Existence and uniqueness of solutions}

Linearized incompatible elasticity gives rise to boundary-value problems for $\sigma\in\Theta^1(\M)$ of the form
\beq
\deltag\sigma = 0
\qquad  
\bHg\sigma=\calR 
\qquad
(\PnnD,\PtnD)\sigma=(\rho,\tau),
\label{eq:Euler_lagrange_equations}
\eeq
where $\calR\in\Theta^2(\M)\cap\ker\frakG\cap\image{\bHg}$ is an algebraic curvature, and $\tau\in\Omega^{1,0}(\dM)$ and $\rho\in C^{\infty}(\dM)$ are the components of the boundary traction. 
In multiply-connected manifolds, the local compatibility condition on $\bHg\sigma$ is often supplemented with a non-local constraint for each generator of the fundamental group of $\M$ \cite{KMS15}.

\begin{lemma}
\label{lem:boundary_data_delta}
Let $\sigma\in\Theta^1(\M)$, $\rho\in C^{\infty}(\dM)$ and $\tau\in\Omega^{1,0}(\dM)$. Suppose that
\[
(\PnnD,\PtnD)\sigma=(\rho,\tau).
\]
Then,
\[
(\frakT^*,\frakF)\sigma=(-\delta\tau,-d\rho + \tfrac12 \trace_{\gD}(\frakh_{0}\wedge\tau))
\]
if and only if
\[
\deltag\sigma|_{\dM}=0.
\]
Here $\frakh_{0}\in\Theta^1(\dM)$ is the scalar second fundamental form of the boundary \cite[Sec.~4.1]{KL21a}, and $d$ and $\delta$ are the  exterior derivative and its dual at the boundary.
\end{lemma}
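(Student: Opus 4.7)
The strategy is to identify the boundary condition $\deltag\sigma|_{\dM}=0$ with the simultaneous vanishing of the two independent scalar components of $\deltag\sigma\in\Omega^{0,1}(\M)$ along the boundary---the normal component $\PtnD\deltag\sigma$ (a scalar on $\dM$) and the tangential component $\PttD\deltag\sigma$ (a 1-form on $\dM$)---and to show that each of these equivalences corresponds to one of the two equations prescribed for $\frakT^*\sigma$ and $\frakF\sigma$.

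For the $\frakT^*$ identity I would exploit the symmetry $\sigma^T=\sigma$, which yields $\deltagV\sigma=(\deltag\sigma)^T$ and $\PntD\sigma=\tau^T$. Under the natural identification of scalar-valued $(k,m)$-forms with $m\le 1$ and $k\le 1$, the four terms defining $\frakT^*\sigma$ in \eqref{eq:1st_order_proj} collapse pairwise to give
\[
\frakT^*\sigma = -\PtnD\deltag\sigma - \deltagD\tau.
\]
Since $\deltagD$ restricted to $\Omega^{1,0}(\dM)$ agrees with the boundary codifferential $\delta$, the prescribed value $\frakT^*\sigma=-\delta\tau$ is equivalent to $\PtnD\deltag\sigma=0$---that is, to the vanishing of the normal component of $\deltag\sigma$ on $\dM$.

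For the $\frakF$ identity the analysis is more substantive, because $\dg$ and $\deltag$ do not commute with boundary projections; the discrepancy is governed by the scalar second fundamental form $\frakh_0$. I would apply the boundary commutation formulas of \cite[Lemmas~4.9--4.10]{KL21a} to rewrite $\PnnD\dg\sigma$ and $\deltagDV\PttD\sigma$ in terms of the prescribed data $(\rho,\tau)=(\PnnD\sigma,\PtnD\sigma)$, the tangential divergence $\PttD\deltag\sigma$, and correction terms built from $\frakh_0$ and $\tau$. Using symmetry of $\sigma$ to combine the form-part and vector-part contributions, the $\frakh_0$ corrections assemble into $\tfrac12\trace_{\gD}(\frakh_0\wedge\tau)$, and one obtains
\[
\frakF\sigma = -\PttD\deltag\sigma - d\rho + \tfrac12\trace_{\gD}(\frakh_0\wedge\tau),
\]
so that the prescribed value of $\frakF\sigma$ is equivalent to $\PttD\deltag\sigma=0$.

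The main technical obstacle lies in this second identity: all of the geometric content is in tracking how the shape-operator corrections from commuting $\dg$ and $\deltag$ with the projection operators $\PttD$, $\PnnD$, $\PntD$, $\PtnD$ combine, after symmetrization, into the single term $\tfrac12\trace_{\gD}(\frakh_0\wedge\tau)$. This bookkeeping is routine given the explicit commutation formulas in \cite[Sec.~4]{KL21a}, but it is where the Weingarten geometry of $\dM$ actually enters the statement. The equivalence in both directions then follows immediately since the tangential and normal components of $\deltag\sigma|_{\dM}$ are independent.
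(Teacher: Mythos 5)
Your proposal is correct and follows essentially the same route as the paper's proof: the $\frakT^*$ identity collapses by the symmetry of $\sigma$ to $\frakT^*\sigma=-\PtnD\deltag\sigma-\delta\tau$, and the $\frakF$ identity is obtained from the commutation relations of \cite[Sec.~4]{KL21a}, with the shape-operator corrections $-\tfrac12\calS_0\tau+\tfrac12(\calS_0^*\tau^T)^T$ combining into $\tfrac12\trace_{\gD}(\frakh_0\wedge\tau)$, so that the two prescribed boundary values are equivalent to the vanishing of $\PtnD\deltag\sigma$ and $\PttD\deltag\sigma$ respectively. This is exactly the decomposition and bookkeeping the paper carries out.
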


\begin{proof}
By the definition \eqref{eq:1st_order_proj} of $\frakT^*$, since $\sigma^T=\sigma$,
\[
\frakT^*\sigma= - \PtnD\deltag\sigma- \deltagD\PtnD\sigma = - \PtnD\deltag\sigma-\delta\tau,
\]
where we used the fact that $\deltagD=\delta$ for scalar forms. Thus, $\PtnD\deltag\sigma=0$ if and only if $\frakT^*\sigma=-\delta\tau$. On the other hand, by the definition \eqref{eq:1st_order_proj} of $\frakF$ combined with the commutation relations between $\deltagV$ and $\PttD$, and $\dg$ and $\PnnD$ (Eqs. (4.17),(4.18) in \cite{KL21a}),
\[
\begin{split}
\frakF\sigma &= \tfrac12(\PnnD\dg\sigma-\dgD\PnnD\sigma)-\tfrac12(\deltagDV\PttD\sigma+\PttD\deltagV\sigma) \\
&=  \tfrac12(-2 \dgD\PnnD\sigma - \calS_0\PtnD\sigma)-
\tfrac12(2\PttD\deltagV\sigma - (\calS^*_0(\PtnD\sigma)^T)^T) \\
&=  - \PttD\deltagV\sigma  - d\rho - 
\tfrac12 \calS_0\tau + \tfrac12 (\calS^*_0\tau^T)^T.
\end{split}
\]
By the definitions of $\calS_0$, $\calS_0^*$, for $\tau\in\Omega^{1,0}(\dM)$ (Lemma 4.5 in  \cite{KL21a}),
\[
\calS_0\tau(X) = \tau(S_0(X))
\Textand
(\calS^*_0\tau^T)^T(X) = (\trace_\g \h_0)\,\tau(X),
\]
from which follows that
\[
(\calS^*_0\tau^T)^T - \calS_0\tau = \trace_{\gD}(\h_0\wedge\tau).
\]
Thus, $\PttD\deltagV\sigma=0$ (and equivalently, $\PttD\deltag\sigma=0$) if and only if $\frakF\sigma=-d\rho+\tfrac12 \trace_{\gD}(\frakh_{0}\wedge\tau)$. Finally $\PttD\deltag\sigma=0$ and $\PtnD\deltag \sigma=0$ are equivalent to $\deltag\sigma|_{\dM}=0$. 
\end{proof}

\begin{corollary}
\label{cor:1_symmetric_biharmonic_fields}
Every $\nu\in\calS\BH^1_{\NN}(\M)$ solves the homogeneous version of \eqref{eq:Euler_lagrange_equations}, i.e., 
\[
\deltag\nu = 0
\qquad  
\bHg\nu= 0 
\qquad
(\PnnD,\PtnD)\nu=(0,0).
\]
Thus,
\[
\ker(\bHg,\bHg^*,\bFg,\PnnD,\PtnD,\frakT^*,\frakF) = \ker(\deltag,\bHg,\PnnD,\PtnD).
\]
\end{corollary}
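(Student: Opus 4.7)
The plan is to reduce the entire corollary to the single claim that every $\nu \in \calS\BH^1_\NN(\M)$ satisfies $\deltag \nu = 0$; the rest is bookkeeping. The inclusion $\calS\BH^1_\NN(\M)\subseteq\ker(\deltag,\bHg,\PnnD,\PtnD)$ is then immediate from the definition of $\calS\BH^1_\NN(\M)$ itself. For the reverse inclusion, suppose $\nu$ lies in $\ker(\deltag,\bHg,\PnnD,\PtnD)$; by symmetry, $\deltagV\nu=(\deltag\nu)^T=0$, so $\bHg^*\nu=\deltag\deltagV\nu=0$ and $\bFg\nu=\dg\deltagV\nu=0$, while the boundary identities $\frakT^*\nu=0$ and $\frakF\nu=0$ follow from \lemref{lem:boundary_data_delta} applied with $(\rho,\tau)=(0,0)$ and the trivial fact $\deltag\nu|_{\dM}=0$.

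To prove the core claim, I would exploit the Saint-Venant orthogonality of \thmref{thm:3.1}: every Lie derivative $\calL_Y\g$ with $Y\in\VF(\M)$ lies in the $L^2$-orthogonal complement of $\calS\BH^1_\NN(\M)$. Therefore, for every $Y\in\VF(\M)$,
\[
0 \;=\; \bra \nu, \calL_Y \g \ket
\;=\; \bra \nu, \dgV Y^\flat + (\dgV Y^\flat)^T \ket
\;=\; 2\,\bra \nu, \dgV Y^\flat \ket,
\]
where the last step uses $\nu^T=\nu$. Integration by parts transfers $\dgV$ to its $L^2$-dual $\deltagV$; the sole boundary contribution couples $\PttD Y^\flat$ with $\PtnD\nu$, and it vanishes since $\PtnD\nu=0$ by hypothesis. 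Hence
\[
\bra \deltagV\nu , Y^\flat \ket \;=\; 0 \qquad\text{for every } Y\in\VF(\M).
\]
Since $\BRK{Y^\flat : Y\in\VF(\M)}$ is dense in $L^2\Omega^{1,0}(\M)$, we conclude $\deltagV\nu=0$, and transposition gives $\deltag\nu=0$.

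The main potential obstacle is verifying that the integration-by-parts boundary term depends only on $\PtnD\nu$; this is so because $Y^\flat$ is a scalar $(1,0)$-form whose vector part carries no components to be further projected, so no other boundary projection of $\nu$ can enter the boundary integral. An alternative, more analytic route would identify $\eta:=\deltagV\nu$ as a harmonic scalar $1$-form (using $\bHg^*\nu=\deltag\eta=0$ and $\bFg\nu=\dg\eta=0$) with $\eta|_{\dM}=0$ (via \lemref{lem:boundary_data_delta}), and then conclude $\eta\equiv 0$ by unique continuation for overdetermined elliptic systems; the orthogonality route is preferable because it stays entirely within the identities already established in the paper and avoids appealing to unique continuation.
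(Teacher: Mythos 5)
Your proof is correct, but the key step is argued by a genuinely different route than the paper's. For the core claim that $\nu\in\calS\BH^1_{\NN}(\M)$ implies $\deltag\nu=0$, the paper stays entirely inside the definition of the biharmonic module: it uses \lemref{lem:boundary_data_delta} with $(\rho,\tau)=(0,0)$ to get $\deltagV\nu|_{\dM}=0$, notes that $\bFg\nu=0$ and $\bHg^*\nu=0$ say exactly that $d\deltagV\nu=0$ and $\delta\deltagV\nu=0$, and then invokes uniqueness of the Dirichlet problem for the Hodge Laplacian (this is your ``alternative analytic route''; it requires only that standard uniqueness statement, not unique continuation, so you dismissed it too quickly). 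You instead import the orthogonality $\calS\BH^1_{\NN}(\M)\perp L(\M)$ from \thmref{thm:3.1} and run an integration-by-parts/density argument. This is valid and non-circular, since the relevant direction of \thmref{thm:3.1} is proved from \thmref{thm:3.11} without reference to this corollary; the trade-off is that your argument leans on the full Saint-Venant machinery (hence on constant sectional curvature in an essential way), whereas the paper's argument needs only the defining conditions of $\calS\BH^1_{\NN}(\M)$ plus a textbook Hodge-theory fact. One small imprecision: the boundary term in the integration by parts formula has \emph{two} contributions, pairing $\PnnD\nu$ with $\PntD Y^\flat$ and $\PtnD\nu$ with $\PttD Y^\flat$; you mention only the second, but both vanish because $\nu\in\ker(\PnnD,\PtnD)$, so the conclusion stands. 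Your treatment of the reverse inclusion (symmetry gives $\deltagV\nu=0$, hence $\bHg^*\nu=\bFg\nu=0$, and \lemref{lem:boundary_data_delta} gives $\frakT^*\nu=\frakF\nu=0$) coincides with the paper's \propref{prop:4.2}.
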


\begin{proof}
Let $\nu\in\calS\BH^1_{\NN}(\M)$, i.e., 
\[
\nu\in\ker(\bHg,\bHg^*,\bFg,\PnnD,\PtnD,\frakT^*,\frakF).
\]
The second and third assertions are the automatically satisfied. It remains to show that $\deltag\nu=0$. It follows from \lemref{lem:boundary_data_delta} (with $\rho=0$ and $\tau=0$) that
\[
\deltagV\nu|_{\partial\M}=0.
\]
The fact that $\bFg\nu=0$ and $\bHg^*\nu=0$ amounts to
\[
d\deltagV\nu=0 \Textand \delta\deltagV\nu=0,
\]
hence, $\deltagV\nu\in\Omega^{1,0}(\M)$ is a harmonic 1-form satisfying vanishing boundary conditions. From the uniqueness of the solution to the Dirichlet problem for the Hodge Laplacian \cite[Thm.~3.4.10]{Sch95b},
\[
\deltagV\nu=0.
\]
Since $\nu$ is symmetric, the same holds for $\deltag\nu\in\Omega^{0,1}(\M)$. 
\end{proof}

\begin{proposition}
\label{prop:4.2}
Let $\sigma\in\Theta^1(\M)$ satisfy 
\[
\deltag\sigma=0
\qquad
(\PnnD,\PtnD)\sigma = (\rho,\tau).
\]  
Then, 
\[
\begin{gathered}
\sigma\in\ker{(\bHg^*,\bFg)} \\
\frakT^*\sigma=-\delta\tau
\Textand
\frakF\sigma = -d\rho + \tfrac12\trace_{\gD}(\frakh_{0}\wedge\tau).
\end{gathered}
\]
\end{proposition}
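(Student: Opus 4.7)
The plan is to split the proposition into its interior claims (that $\sigma\in\ker(\bHg^{*},\bFg)$) and its boundary claims (the explicit values of $\frakT^{*}\sigma$ and $\frakF\sigma$), and handle them separately.

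For the interior claims, the key observation is that in constant sectional curvature the operators restrict nicely to $\Theta^{1}(\M)$: by the identities listed just before \thmref{thm:3.11}, $\bHg^{*}|_{\Theta^{1}(\M)} = \deltag\deltagV$ and $\bFg|_{\Theta^{1}(\M)} = \dg\deltagV$. Thus both operators factor through $\deltagV\sigma$. Next I would exploit the symmetry $\sigma^{T}=\sigma$ together with the compatibility rule $\deltagV\psi=(\deltag\psi^{T})^{T}$: it follows that
\[
\deltagV\sigma = (\deltag\sigma^{T})^{T} = (\deltag\sigma)^{T} = 0,
\]
so $\bHg^{*}\sigma=\deltag\deltagV\sigma=0$ and $\bFg\sigma=\dg\deltagV\sigma=0$.

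For the boundary claims, the work has already been carried out in \lemref{lem:boundary_data_delta}. The hypothesis $\deltag\sigma=0$ in particular gives $\deltag\sigma|_{\dM}=0$, so one direction of that lemma applies directly with the prescribed $(\rho,\tau)=(\PnnD\sigma,\PtnD\sigma)$, yielding
\[
\frakT^{*}\sigma=-\delta\tau \Textand \frakF\sigma = -d\rho+\tfrac12\trace_{\gD}(\frakh_{0}\wedge\tau).
\]
No real obstacle is foreseen: the interior identities are algebraic consequences of the exact formulas for $\bHg^{*}$ and $\bFg$ in constant curvature, and the boundary identities are a verbatim invocation of \lemref{lem:boundary_data_delta}. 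The only point that requires a bit of care is the use of the transposition rule $\deltagV\sigma=(\deltag\sigma)^{T}$ for symmetric $(1,1)$-forms, which follows from the definition \eqref{eq:dual_ops} of the vector-counterparts of first-order operators.
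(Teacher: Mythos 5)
Your proposal is correct and follows essentially the same route as the paper: the boundary identities are read off from Lemma~\ref{lem:boundary_data_delta} using $\deltag\sigma|_{\dM}=0$, and the interior claims reduce to $\deltagV\sigma=(\deltag\sigma)^T=0$ by symmetry, after which $\bHg^*\sigma=\deltag\deltagV\sigma$ and $\bFg\sigma=\dg\deltagV\sigma$ vanish. The only cosmetic difference is that you justify the factorizations via the constant-curvature formulas preceding Theorem~\ref{thm:3.11}, whereas they hold here without that assumption (since $\bFg=\dg\deltagV$ identically and $D_\g^*$ annihilates $(1,1)$-forms), consistent with the paper's later remark that no geometric assumption is needed at this stage.
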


\begin{proof}
Since $\deltag\sigma=0$, and in particular on the boundary, the expressions for $\frakT^*\sigma$ and $\frakF\sigma$
follow from \lemref{lem:boundary_data_delta}.
By the symmetry of $\sigma$, $\deltagV\sigma=0$, hence 
\[
\bHg^*\sigma=\delta\deltagV\sigma=0
\Textand
\bFg\sigma = \dg\deltagV\sigma=0.
\]
\end{proof}

\begin{corollary}
Let $\sigma$ be a solution of the boundary-value problem 
\eqref{eq:Euler_lagrange_equations}, then it is a solution to the regular elliptic system
\beq
\begin{gathered}
\Bg\sigma=\bHg^*\calR
\\(\PnnD,\PtnD,\frakT^*,\frakF)\sigma=(\rho,\tau,-\delta\tau,-d\rho+\tfrac12\trace_{\gD}(\frakh_{0}\wedge\tau))
\\(\PnnD\bHg,\frakT^*\bHg)\sigma= (\PnnD,\frakT^*)\calR.
\end{gathered}
\label{eq:bilaplcian_equations_stress}
\eeq
The conditions on $(\frakT^*,\frakF)\sigma$ are equivalent to $\deltag\sigma|_{\dM}=0$.  
\end{corollary}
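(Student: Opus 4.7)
The proof is an assembly of results already established, and the strategy is to peel off each piece of the system \eqref{eq:bilaplcian_equations_stress} using a specific earlier proposition. First, I would invoke \propref{prop:4.2}, which, given the hypotheses $\deltag\sigma = 0$ and $(\PnnD,\PtnD)\sigma = (\rho,\tau)$, simultaneously yields $\bHg^*\sigma = 0$, $\bFg\sigma = 0$, together with the boundary identities $\frakT^*\sigma = -\delta\tau$ and $\frakF\sigma = -d\rho + \tfrac12\trace_{\gD}(\frakh_0 \wedge \tau)$. This single invocation simultaneously (i) kills two of the four summands of $\Bg\sigma$ and (ii) supplies two of the six boundary conditions required.

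Next, to dispose of the fourth summand $\bFg\bFg^*\sigma$, I would exploit the symmetry of $\sigma$. Recall the transposition identity $(\bFg\psi^T)^T = \bFg^*\psi$ established in Section~3.2. Applying it to $\psi = \sigma$ and using $\sigma^T = \sigma$, one gets $\bFg^*\sigma = (\bFg\sigma)^T = 0$, whence $\bFg\bFg^*\sigma = 0$ as well. Expanding the definition of $\Bg$ and substituting $\bHg\sigma = \calR$ then gives
\[
\Bg\sigma = \bHg^*\bHg\sigma + \bHg\bHg^*\sigma + \bFg^*\bFg\sigma + \bFg\bFg^*\sigma = \bHg^*\calR,
\]
which is the interior equation. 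The remaining boundary conditions $(\PnnD\bHg,\frakT^*\bHg)\sigma = (\PnnD,\frakT^*)\calR$ follow by directly applying those boundary operators to the identity $\bHg\sigma = \calR$, and $(\PnnD,\PtnD)\sigma = (\rho,\tau)$ is given by hypothesis.

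For the final assertion that the conditions on $(\frakT^*,\frakF)\sigma$ are equivalent to $\deltag\sigma|_{\dM} = 0$, no new work is needed: this is exactly the content of \lemref{lem:boundary_data_delta}, invoked with the present boundary data $(\rho,\tau)$.

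The only subtlety I anticipate is the bookkeeping around $\bFg^*\sigma$. One must carefully distinguish between $\bFg^*$ as the restriction $\Omega^{2,0}(\M) \to \Theta^1(\M)$ used in the decomposition of \thmref{thm:3.11}, and the extended operator acting on $\Omega^{1,1}(\M)$ via transposition that appears when one computes $\bFg\bFg^*\sigma$ for symmetric $\sigma$. Once this identification is in place and the symmetry of $\sigma$ is correctly leveraged through the transposition identity, the remainder of the argument is a direct and routine invocation of the machinery collected in Section~2 and the preceding results of Section~4.
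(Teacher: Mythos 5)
Your proposal is correct and follows essentially the same route as the paper: both rest entirely on \propref{prop:4.2} to kill the $\bHg\bHg^*$ and $\bFg^*\bFg$ terms and to supply the boundary identities, with the remaining boundary conditions read off from $\bHg\sigma=\calR$ and the last claim being \lemref{lem:boundary_data_delta}. The only cosmetic difference is that the paper writes the fourth summand of $\Bg\sigma$ as $(\bFg^*\bFg\sigma)^T$ (which vanishes directly from $\bFg\sigma=0$), whereas you handle $\bFg\bFg^*\sigma$ via the transposition identity $\bFg^*\sigma=(\bFg\sigma)^T=0$ — these are the same computation.
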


\begin{proof}
Since by \propref{prop:4.2}, $\sigma\in\ker(\bHg^*,\bFg)$, it follows that
\[
\Bg\sigma=\bHg^*\bHg\sigma+\bHg\bHg^*\sigma+\bFg^*\bFg\sigma+(\bFg^*\bFg\sigma)^T = \bHg^*\calR.
\]
The boundary conditions follows from \propref{prop:4.2}.
\end{proof}

By \cite[Thm.~6.4]{KL21a}, the system \eqref{eq:bilaplcian_equations_stress} has a solution  $\sigma\in W^{s+2,p}\Theta^1(\M)$ for every choice of 
$\calR\in W^{s,p}\Theta^2(\M)$ and $\tau,\rho\in W^{s+2-1/p,p}\Omega^{*,*}(\dM)$.
Moreover, the solution is unique up to an element in the finite-dimensional biharmonic module $\calS\BH_{\NN}^1(\M)$. 
Thus, \emph{if} the boundary-value problem \eqref{eq:Euler_lagrange_equations} is solvable, then its solution must coincide with \emph{a} solution of the regular elliptic system \eqref{eq:bilaplcian_equations_stress}. In fact, due to \corrref{cor:1_symmetric_biharmonic_fields}, if \eqref{eq:Euler_lagrange_equations} is solvable, then \emph{every} solution of \eqref{eq:bilaplcian_equations_stress} solves \eqref{eq:Euler_lagrange_equations} as well.
Thus, the natural question is under what conditions on $\calR$, $\rho$ and $\tau$, is \eqref{eq:Euler_lagrange_equations} solvable.

Note that we have made no assumption about neither the geometry nor the topology of $(\M,\g)$. Thus, \eqref{eq:bilaplcian_equations_stress} constitutes a generalization of the biharmonic equations for the stress/strain field in classical elasticity (see e.g., \cite[p.~133]{Gur72}, where $S$ is the stress field and $E$ is the strain) to the setting of incompatible elasticity; our equation is supplemented by a complete set of boundary conditions and a uniqueness clause. 

Under the assumption of constant sectional curvature, the decomposition in \thmref{thm:3.11} enables us to take the solution to the boundary-value problem \eqref{eq:Euler_lagrange_equations} one step further. 

We first need the following lemma:

\begin{lemma}
\label{lem:delta_Hg_star}
Let $\g$ have constant sectional curvature.
For all $\psi\in\Theta^2(\M)$
\[
\bHg^*\psi\in\ker{(\deltag,\deltagV)}.
\]
\end{lemma}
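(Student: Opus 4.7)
The plan is to exploit that $\bHg^*$ commutes with transposition: for $\psi\in\Theta^2(\M)$, $(\bHg^*\psi)^T = \bHg^*\psi^T = \bHg^*\psi$, so $\bHg^*\psi\in\Theta^1(\M)$. Since for any $\sigma\in\Theta^1(\M)$ the relation $(\deltag\sigma)^T = \deltagV\sigma$ holds, it suffices to establish $\deltag\bHg^*\psi = 0$; the vanishing of $\deltagV\bHg^*\psi$ will follow immediately by transposition.

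I next argue by duality. For any $\omega\in\Omega^{0,1}(\M)$ with compact support in the interior of $\M$, two successive applications of the integration-by-parts formulas \eqref{eq:ibp} (with no boundary contributions, thanks to the compact support) yield
\[
\bra\deltag\bHg^*\psi,\omega\ket = \bra\bHg^*\psi,\dg\omega\ket = \bra\psi,\bHg\dg\omega\ket.
\]
Because $\psi$ is symmetric and transposition is an $L^2$-isometry, the right-hand side vanishes provided $\bHg\dg\omega$ is anti-symmetric under transposition. This would yield $\bra\deltag\bHg^*\psi,\omega\ket = 0$ for a dense family of test forms, giving the pointwise vanishing of $\deltag\bHg^*\psi$ in the interior of $\M$; smoothness then extends it to all of $\M$.

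The anti-symmetry of $\bHg\dg\omega$ is the crux, and it flows directly from \propref{prop:2.8}. Setting $Y = \omega^\sharp\in\VF(\M)$, we have $Y^\flat = \omega^T\in\Omega^{1,0}(\M)$, and \eqref{eq:calLg} gives
\[
\calL_Y\g = \dgV\omega^T + (\dgV\omega^T)^T = \dgV\omega^T + \dg\omega,
\]
where we use that transposition interchanges $\dg$ and $\dgV$. Invoking the constant-sectional-curvature hypothesis, \propref{prop:2.8} yields $\bHg\calL_Y\g = 0$, hence $\bHg\dg\omega + \bHg\dgV\omega^T = 0$. Since $\bHg$ commutes with transposition, $\bHg\dgV\omega^T = (\bHg\dg\omega)^T$, and therefore $\bHg\dg\omega + (\bHg\dg\omega)^T = 0$, as required.

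No substantive obstacle is anticipated: all the algebraic inputs are already available in the excerpt, and the only care required is to restrict the test forms $\omega$ to have compact support so that the integration-by-parts manipulations have no residual boundary terms—density of compactly supported $(0,1)$-forms in the interior then closes the argument.
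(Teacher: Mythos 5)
Your proof is correct and follows essentially the same route as the paper: test against compactly supported forms, integrate by parts to move $\bHg^*$ across, and invoke \propref{prop:2.8} via the identity $\calL_{\omega^\sharp}\g=\dg\omega+(\dg\omega)^T$; the paper merely symmetrizes against $\bHg^*\psi$ before transferring the operator, whereas you transfer first and then use the antisymmetry of $\bHg\dg\omega$ against the symmetric $\psi$. The only cosmetic slip is attributing the first-order step $\bra\deltag\bHg^*\psi,\omega\ket=\bra\bHg^*\psi,\dg\omega\ket$ to \eqref{eq:ibp}, when it is just the defining $L^2$-duality of $\dg$ and $\deltag$.
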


\begin{proof}
Since $\bHg^*\psi\in\Theta^1(\M)$, it suffices to prove that $\deltagV\bHg^*\psi=0$. Let $\omega\in\Omega^{1,0}(\M)$ be compactly supported, then 
\[
\begin{split}
\bra\deltagV\bHg^*\psi,\omega\ket 
&= \bra\bHg^*\psi,\dgV\omega\ket \\
&= \tfrac12 \bra\bHg^*\psi,\dgV\omega + (\dgV\omega)^T \ket \\
&=  \tfrac12\bra\bHg^*\psi,\calL_{\omega^{\sharp}}\g\ket \\
&= \tfrac12\bra\psi,\bHg\calL_{\omega^{\sharp}}\g\ket \\
&= 0.
\end{split}
\]
The passages to both the first and fourth lines follow from integration by parts;
the passage to the second line follows from the symmetry of $\bHg^*\psi$; the passage to the third line follows from expression \eqref{eq:calLg} for the Lie derivative of the metric; finally, the passage to the fifth line follows from \propref{prop:2.8}, which holds for spaces of constant sectional curvature. Since this identity holds for arbitrary compactly supported $\omega\in\Omega^{1,0}(\M)$, it follows that $\deltagV\bHg^*\psi=0$. 
\end{proof}

This brings us to the main theorem of this section:

\begin{theorem}
\label{thm:5.4}
Consider the space of smooth Killing 1-forms,
\[
K(\M)=\BRK{\omega\in\Omega^{1,0}(\M)~:~\calL_{\omega^{\sharp}}\g=0}.
\]
Under the assumption that $(\M,\g)$ has constant sectional curvature, there exists a solution $\sigma\in\Theta^1(\M)$ to the boundary-value problem \eqref{eq:Euler_lagrange_equations} if and only if
\beq
\calR\in\image\bHg,
\label{eq:integrability_conditionsR}
\eeq
and
\beq
\int_{\dM}\Brk{(\rho,\PntD\omega)_{\gD}+(\tau,\PttD\omega)_{\gD}}\VolumeD=0
\qquad
\forall\omega\in K(\M).
\label{eq:integrability_conditions}
\eeq
The solution is unique up to an element $\nu\in\calS\BH^1_{\NN}(\M)$. 
In particular, there exists a unique solution orthogonal to $\calS\BH^1_{\NN}(\M)$.
If $\calR$, $\rho$ and $\tau$ are Sobolev sections, then the solution inherits the regularity, with for all $1\leq q\leq p$,
\beq
\Norm{\sigma_\nu}_{W^{s+2,p}(\M)}\lesssim \Norm{\calR}_{W^{s,p}(\M)}+\Norm{\tau}_{W^{s+2-1/p,p}(\dM)}+\Norm{\rho}_{W^{s+2-1/p,p}(\dM)}+\Norm{\nu}_{L^{q}(\M)},
\label{eq:sobolev_estimate}
\eeq
where fractional Sobolev spaces on manifolds are defined e.g. in \cite[Chapter~13]{Tay11c}.
\end{theorem}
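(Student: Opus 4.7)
The plan is to reduce the first-order boundary-value problem \eqref{eq:Euler_lagrange_equations} to the regular elliptic bilaplacian boundary-value problem \eqref{eq:bilaplcian_equations_stress} established in Corollary 4.4, and then invoke the elliptic theory of [KL21a, Thm.~6.4] for existence, uniqueness and Sobolev estimates. Necessity of the compatibility conditions is a direct integration-by-parts argument; sufficiency requires (i) matching the Fredholm solvability condition for \eqref{eq:bilaplcian_equations_stress} with the Killing condition \eqref{eq:integrability_conditions}, and (ii) verifying that any solution of the bilaplacian BVP automatically solves the first-order system \eqref{eq:Euler_lagrange_equations}.

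For necessity, $\calR = \bHg\sigma \in \image\bHg$ is immediate. For the Killing condition, let $\omega \in K(\M)$; since $\dgV\omega + (\dgV\omega)^T = \calL_{\omega^\sharp}\g = 0$, the double form $\dgV\omega$ is antisymmetric, so $\bra\sigma, \dgV\omega\ket = 0$ by symmetry of $\sigma$. An integration by parts for $\dgV$ against $\deltagV$ produces
\[
0 = \bra\deltagV\sigma, \omega\ket + \int_{\dM}\Brk{(\PnnD\sigma, \PntD\omega)_{\gD} + (\PtnD\sigma, \PttD\omega)_{\gD}}\VolumeD,
\]
whose bulk term vanishes because $\sigma$ symmetric and $\deltag\sigma = 0$ force $\deltagV\sigma = 0$; substituting $(\PnnD, \PtnD)\sigma = (\rho, \tau)$ yields \eqref{eq:integrability_conditions}.

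For sufficiency, [KL21a, Thm.~6.4] provides a solution $\sigma \in W^{s+2,p}\Theta^1(\M)$ of \eqref{eq:bilaplcian_equations_stress}, unique modulo $\calS\BH^1_{\NN}(\M)$ with the estimate \eqref{eq:sobolev_estimate}, once the data is $L^2$-orthogonal to $\calS\BH^1_{\NN}(\M)$. Pairing this data with arbitrary $\nu \in \calS\BH^1_{\NN}(\M)$ and integrating by parts via \eqref{eq:ibp}, every interior contribution involving $\bHg\nu$, $\bHg^*\nu$, $\bFg\nu$ and every boundary contribution involving $\PnnD\nu$, $\PtnD\nu$, $\frakT^*\nu$, $\frakF\nu$ vanishes by definition \eqref{eq:BHTTNN}; since also $\deltag\nu = 0$ by Corollary 4.3, what remains is a boundary pairing of $(\rho, \tau)$ against the complementary projections of $\nu$, which precisely matches \eqref{eq:integrability_conditions}. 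To confirm that this $\sigma$ actually solves \eqref{eq:Euler_lagrange_equations}, set $\xi = \bHg\sigma - \calR$: the exactness relations \eqref{eq:exact_relations} combined with $\calR = \bHg\eta$ yield $\bHg\xi = 0$ and $\bFg\xi = 0$, while the boundary conditions of \eqref{eq:bilaplcian_equations_stress} yield $\PnnD\xi = 0$ and $\frakT^*\xi = 0$; pairing $\Bg\sigma - \bHg^*\calR = 0$ suitably with $\sigma$, integrating by parts via \eqref{eq:ibp}, and using Lemma 5.3 to discard the curvature-free boundary terms produces an energy identity forcing $\bHg^*\sigma = \bFg\sigma = \bFg^*\sigma = 0$ and $\xi = 0$; finally $\deltag\sigma = 0$ follows from $\bFg^*\sigma = \dgV\deltag\sigma = 0$, combined with the boundary condition $\frakT^*\sigma = -\delta\tau$ and the boundary-value correspondence of Lemma 5.1.

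The main obstacle is reconciling the two distinct finite-dimensional obstruction spaces arising in the argument: the bilaplacian's kernel $\calS\BH^1_{\NN}(\M)$, which controls Fredholm solvability and uniqueness of \eqref{eq:bilaplcian_equations_stress}, and the Killing space $K(\M)$ controlling solvability of \eqref{eq:Euler_lagrange_equations}. These spaces are related only indirectly, through the boundary integration-by-parts computation above, which relies crucially on the constant sectional curvature hypothesis via the exactness relations \eqref{eq:exact_relations} and Lemma 5.3. Uniqueness modulo $\calS\BH^1_{\NN}(\M)$ and the Sobolev estimate \eqref{eq:sobolev_estimate} then follow directly from Corollary 4.3 and [KL21a, Thm.~6.4].
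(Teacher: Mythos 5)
Your overall architecture (reduce to the regular elliptic bilaplacian system \eqref{eq:bilaplcian_equations_stress}, then show its solution solves the first-order system) and your necessity argument match the paper. But the sufficiency direction contains a genuine conceptual error in where and how the Killing condition \eqref{eq:integrability_conditions} enters. You treat it as the Fredholm solvability condition for \eqref{eq:bilaplcian_equations_stress}, claiming that orthogonality of the data to $\calS\BH^1_{\NN}(\M)$ ``precisely matches'' \eqref{eq:integrability_conditions}. This cannot be right: by \cite[Thm.~6.4]{KL21a} the bilaplacian system is solvable for \emph{every} choice of data, so there is no orthogonality obstruction to match; moreover $\calS\BH^1_{\NN}(\M)$ is a space of symmetric $(1,1)$-forms while $K(\M)$ is a space of $1$-forms, and the boundary pairing you describe would produce a condition indexed by $\nu\in\calS\BH^1_{\NN}(\M)$, not by $\omega\in K(\M)$. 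These are different conditions, related only through the solution itself.

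The condition \eqref{eq:integrability_conditions} is actually needed at the step you gloss over as an ``energy identity.'' If you pair $\bHg\bHg^*\sigma+\bFg^*\bFg\sigma+(\bFg^*\bFg\sigma)^T=0$ with $\sigma$ and integrate by parts via \eqref{eq:ibp}, the boundary terms involve $\frakT^*\sigma=-\delta\tau$ and $\PnnD\sigma=\rho$, which do not vanish for general data, so you cannot conclude $\bHg^*\sigma=\bFg\sigma=0$ this way. The paper's device is to recognize
\[
0=\bHg\bHg^*\sigma+\bFg^*\bFg\sigma+(\bFg^*\bFg\sigma)^T=\calL_{\omega^\sharp}\g,
\qquad
\omega=\tfrac12\, d\bHg^*\sigma+\delta\bFg\sigma,
\]
so that $\omega$ is a Killing form \emph{constructed from the candidate solution}; applying \eqref{eq:integrability_conditions} to this particular $\omega$, and using $\deltagV\sigma|_{\dM}=0$ (from the boundary data via the lemma on $(\frakT^*,\frakF)$), one converts the boundary integral into $\tfrac12\Norm{\bHg^*\sigma}^2+\Norm{\bFg\sigma}^2=0$. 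Without this step your argument has no mechanism for using \eqref{eq:integrability_conditions} at all in the sufficiency direction, and the conclusion $\deltag\sigma=0$ does not follow. A secondary gap: you skip the reduction to $\calR=0$ (writing $\calR=\bHg\bHg^*\beta$ with $\beta\in\ker(\PnnD,\frakT^*)$ and subtracting the particular solution $\bHg^*\beta$), and your claim that ``$\bHg\xi=0$ and $\bFg\xi=0$ \dots forcing $\xi=0$'' is unsubstantiated; the paper instead splits the biharmonic equation using that $\bHg^*\bHg\sigma\in\calS\CC^1(\M)$ is orthogonal to the other images, and then uses the boundary data $(\PnnD\bHg,\frakT^*\bHg)\sigma=0$ to get $\bra\bHg\sigma,\bHg\sigma\ket=\bra\bHg^*\bHg\sigma,\sigma\ket=0$.
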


\begin{proof}
Let $\sigma\in\Theta^1(\M)$ be a solution to \eqref{eq:Euler_lagrange_equations}. Obviously, $\calR\in\image{\bHg}$. 
Since $\deltagV\sigma=0$, it follows from integration by parts (Eq. (4.23) in \cite{KL21a}) that for all $\omega\in\Omega^{1,0}(\M)$,
\[
0 =\bra\deltagV\sigma,\omega\ket =
\bra\sigma,\dgV\omega\ket +
\int_{\dM}\Brk{(\rho,\PntD\omega)_{\gD}+(\tau,\PttD\omega)_{\gD}}\VolumeD.
\]
As in the proof of \lemref{lem:delta_Hg_star}, since $\sigma$ is symmetric, 
\[
\bra\sigma,\dgV\omega\ket = 
\tfrac12\bra\sigma,\dgV\omega+(\dgV\omega)^T\ket =
\tfrac12\bra\sigma,\calL_{\omega^{\sharp}}\g\ket.
\]
By the definition of Killing 1-forms, $\calL_{\omega^{\sharp}}\g = 0$ for all 
$\omega\in K(\M)$, proving the necessity of the compatibility condition \eqref{eq:integrability_conditions}.
As for the uniqueness clause, let $\sigma,\sigma'\in\Theta^1(\M)$ be solutions of \eqref{eq:Euler_lagrange_equations}, then by \propref{prop:4.2},
\[
\sigma-\sigma'\in\ker{(\bHg,\bHg^*,\bFg,\PnnD,\PtnD,\frakT^*,\frakF)}=\calS\BH^1_{\NN}.
\]

We proceed to prove the sufficiency of conditions \eqref{eq:integrability_conditionsR} and \eqref{eq:integrability_conditions}. 
We first argue that we may take $\calR=0$. Let $\calR\in\image\bHg$. Then $\calR=\bHg\psi$ for some $\psi\in\Theta^1(\M)$; decomposing $\psi$ according to \thmref{thm:3.11},
\[
\psi = \bHg\alpha + \bHg^*\beta + \bFg^*\lambda + (\bFg^*\lambda)^T + \kappa,
\]
where 
\[
\beta\in\ker(\PnnD,\frakT^*),
\]
since $\bHg\bHg=0$, $\bHg\bFg^{*}=0$ and $\bHg(\calS\BH^1(\M))=0$, we find that
\[
\calR = \bHg\psi = \bHg\bHg^*\beta. 
\]
By \lemref{lem:delta_Hg_star}, $\deltag \bHg^*\beta=0$, and by \cite[Cor.~7.2]{KL21a},
\[
(\PnnD,\PtnD)\bHg^*\beta = (0,0).
\] 
Thus, $\bHg^*\beta$ is a solution to \eqref{eq:Euler_lagrange_equations} for vanishing boundary data.
It follows that $\sigma$ is a solution of \eqref{eq:Euler_lagrange_equations} if and only if $\sigma - \bHg^*\beta$ solves the same equation with $\calR=0$.
 
Let $\sigma\in\Theta^1(\M)$ be a solution of the regular elliptic system \eqref{eq:bilaplcian_equations_stress} with $\calR=0$, \[
\Bg\sigma=\bHg\bHg^*\sigma+\bHg^*\bHg\sigma +\bFg^*\bFg\sigma+(\bFg^*\bFg\sigma)^T =0,
\]
and boundary data
\[
\begin{gathered}
(\PnnD,\PtnD,\frakT^*,\frakF)\sigma=(\rho,\tau,-\delta\tau,-d\rho+\tfrac12\trace_{\gD}(\frakh_{0}\wedge\tau)) \\
(\PnnD\bHg,\frakT^*\bHg)\sigma=(0,0).
\end{gathered}
\]
Thus, $\bHg^*\bHg\sigma\in\calS\CC^1(\M)$, which is orthogonal to the images of $\bHg$ and $\bFg$ by integration by parts \cite[Prop.~7.1]{KL21a}, hence the biharmonic equation splits into
\[
\bHg^*\bHg\sigma=0 
\Textand  
\bHg\bHg^*\sigma + \bFg^*\bFg\sigma + (\bFg^*\bFg\sigma)^T=0.
\]
Using once again the boundary data for $\bHg\sigma$,
\[
\bra\bHg\sigma,\bHg\sigma\ket = \bra\bHg^*\bHg\sigma,\sigma\ket =  0, 
\]
from which we conclude that
\[
\bHg\sigma=0.
\]

By the same argument as in the proof of \thmref{thm:3.1},
\[
0 = \bHg\bHg^*\sigma + \bFg^*\bFg\sigma + (\bFg^*\bFg\sigma)^T = \calL_{\omega^{\sharp}}\g,
\]
where $\omega\in\Omega^{1,0}(\M)$ is given by
\[
\omega= \tfrac12 d\bHg^*\sigma+\delta \bFg\sigma.
\]
Thus, $\omega\in K(\M)$. By the integrability condition \eqref{eq:integrability_conditions}, reversing the calculation at the beginning of the proof,
\[
0 = \int_{\dM}\Brk{(\rho,\PntD\omega)_{\gD}+(\tau,\PttD\omega)_{\gD}}\VolumeD 
=\bra\deltagV\sigma,\omega\ket - \bra\sigma,\dgV\omega\ket,
\]
however once again,
\[
\bra\sigma,\dgV\omega\ket = \bra\sigma,\dgV\omega + (\dgV\omega)^T \ket = \bra\sigma,\calL_{\omega^{\sharp}}\g\ket = 0,
\]
hence $\bra\deltagV\sigma,\omega\ket=0$.
Substituting back the definition of $\omega$,
\[
\tfrac12 \bra \deltagV\sigma, d\bHg^*\sigma\ket +\bra\deltagV\sigma,\delta\bFg\sigma \ket = 0.
\]
By \lemref{lem:boundary_data_delta}, given the boundary data, $\deltagV\sigma$ vanishes at the boundary, hence integrating by parts,
\[
\tfrac12\bra\bHg^*\sigma,\bHg^*\sigma\ket  +
\bra\bFg\sigma,\bFg\sigma\ket   =0,
\]
from which we conclude that $\bHg^*\sigma=0$ and $\bFg\sigma=0$. 

We have just established that
\[
\begin{gathered}
d\deltagV\sigma=0 \qquad \delta\deltagV\sigma=0
\Textand
\deltagV\sigma|_{\dM}=0.
\end{gathered}
\] 
Hence, $\deltagV\sigma\in\Omega^{1,0}(\M)$ is a harmonic 1-form satisfying vanishing boundary conditions, from which we conclude that
\[
\deltagV\sigma=0,
\]
that is, $\sigma$ is a solution of the boundary-value problem \eqref{eq:Euler_lagrange_equations}.

If the  data are Sobolev sections, the arguments remains the same, as the solution to the regular elliptic problem inherits the regularity of the data \cite[Prop.~7.4, Thm.~6.4]{KL21a} (the former is required to estimate $\bHg^*\beta$). 
Note that by Korn's inequality \cite[Ch.~5.12]{Tay11a} (weak) Killing fields are smooth, so $\omega\in K(\M)$ even in the non-smooth case.

\end{proof}

In fact, as is well-known, the space $K(\M)$, and hence the obstruction \eqref{eq:integrability_conditions}, is finite-dimensional.
The solvability conditions \eqref{eq:integrability_conditionsR} and \eqref{eq:integrability_conditions} are in general not easy to verify. There are however situations of practical interest in which they can be shown to hold.
As explained in \secref{sec:application}, in (linearized) incompatible elasticity, the source term $\calR$ is the Riemannian curvature tensor, which can be expressed as $\bHg\g$, hence \eqref{eq:integrability_conditionsR} is satisfied.
The boundary compatibility condition holds trivially in the absence of traction, as well as in the case of constant normal traction: 

\begin{proposition}
Condition \eqref{eq:integrability_conditions} holds for $\rho=\text{const.}$ and $\tau=0$.
\end{proposition}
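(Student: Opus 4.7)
The plan is to reduce the integrability condition to a statement about Killing fields and apply the divergence theorem. With $\tau=0$ and $\rho$ a constant, the boundary integral in \eqref{eq:integrability_conditions} collapses to
\[
\rho\int_{\dM}\PntD\omega\,\VolumeD,
\]
so it suffices to show that $\int_{\dM}\PntD\omega\,\VolumeD=0$ for every $\omega\in K(\M)$. Since $\PntD\omega$ is the pullback to $\dM$ of $i_{\frakn}\omega=\omega(\frakn)$, and $\omega(\frakn)=(\omega^{\sharp},\frakn)_{\g}$ is the normal component of the Killing vector field $X=\omega^{\sharp}$, the quantity in question is the flux of $X$ through the boundary.

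The main step is to observe that every Killing field is divergence-free: tracing the Killing equation $\calL_{X}\g=0$ using the identity $\calL_{X}\g=\dgV X^\flat+(\dgV X^\flat)^T$ from \eqref{eq:calLg} gives $0=\trace_\g\calL_{X}\g=2\,\trace_\g\nabg X^{\flat}$, which is (up to sign) twice the divergence of $X$. Hence $\div X=0$. Applying the divergence theorem,
\[
\int_{\dM}\omega(\frakn)\,\VolumeD=\int_{\dM}(X,\frakn)_{\g}\,\VolumeD=\int_{\M}\div X\,\VolumeG=0,
\]
so the integral vanishes term-by-term. Multiplying by the constant $\rho$ shows that \eqref{eq:integrability_conditions} holds for all $\omega\in K(\M)$, as required.

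I do not anticipate a genuine obstacle here; the only subtlety is bookkeeping of sign conventions between the operator $\deltag$ on double forms, the exterior codifferential $\delta$ on scalar forms, and the classical divergence of a vector field. Since the trace of a Killing Lie derivative is identically zero, the overall sign is immaterial and the conclusion is robust. No assumption on the topology or sectional curvature of $\M$ is needed for this particular proposition, beyond what is already in force in Theorem~\ref{thm:5.4}.
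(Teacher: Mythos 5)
Your argument is correct and is essentially the paper's own proof: both reduce the condition to the vanishing of the flux of the Killing vector field $\omega^{\sharp}$ through $\dM$, using that Killing fields are divergence-free ($\delta\omega=0$, which the paper justifies via volume preservation and you via tracing the Killing equation --- the same fact) together with the divergence theorem, which is the paper's integration-by-parts identity $0=\bra\rho,\delta\omega\ket$ for the constant extension of $\rho$. No further comment is needed.
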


\begin{proof}
Let $\omega\in K(\M)$. As Killing fields preserve the volume form, $\delta\omega=0$. Extend  $\rho$ into a constant function $\rho\in C^\infty(\M)$. It follows from the integration by parts formula that
\[
0 = \bra\rho,\delta\omega\ket = \int_{\dM} (\rho,\PntD\omega)_\gD\,\VolumeD,
\]
which is precisely \eqref{eq:integrability_conditions} with $\tau=0$.
\end{proof}

\section{Stress potentials}
\subsection{Existence of stress potentials}

Suppose we are given $\sigma\in\Theta^1(\M)$ satisfying
\[
\deltag\sigma=0.
\]
As explained in the introduction, when $(\M,\g)$ is a simply-connected Euclidean domain, there exists a $\psi\in\Theta^2(\M)$ such that
\[
\sigma=\bHg^*\psi.
\]
A considerable amount of gauge freedom for this $\psi$ suggests itself from this representation; one can alter $\psi$ by any element in $\ker\bHg^*$; another source of freedom is in the boundary conditions. The decomposition \thmref{thm:3.11} provides a complete characterization of the elements in this kernel. 
The following proposition extends this classical representation theorem to manifolds of constant sectional curvature, applicable to arbitrary dimension and topology, supplemented with a choice of gauge, including boundary conditions:

\begin{proposition}
\label{prop:stress_potential}
Let $\sigma\in\Theta^1(\M)$ satisfy $\deltag\sigma=0$. 
Suppose that there exists an $\omega\in\Theta^2(\M)$, satisfying
\beq
\begin{gathered}
\sigma - \bHg^*\omega \perp \calS\BH^1_\NN(\M) 
\Textand
\sigma - \bHg^*\omega \in \ker(\PnnD,\PtnD).
\end{gathered} 
\label{eq:integrability_conditions_stress_potential}
\eeq
Then, there exists a $\psi\in\Theta^2(\M)\cap\ker\frakG$ (i.e., an algebraic curvature) satisfying
\beq
\bHg^*\psi=\sigma 
\qquad 
\psi\in\image \bHg 
\qquad
(\PnnD,\frakT^*)\psi = (\PnnD,\frakT^*)\omega.
\label{eq:stress_potential}
\eeq
If $\sigma$ and $\omega$ are Sobolev sections, then $\psi$ can be chosen to satisfy
\[
\Norm{\psi}_{W^{s+2,p}(\M)} \lesssim 
\Norm{\sigma}_{W^{s,p}(\M)} + \Norm{\PnnD\omega}_{W^{s+2-\frac{1}{p},p}(\dM)}+\Norm{\frakT^*\omega}_{W^{s+1-\frac{1}{p},p}(\dM)}.
\]
\end{proposition}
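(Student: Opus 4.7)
The plan is to reduce the problem to constructing a $\bHg^*$-primitive of $\tau := \sigma - \bHg^*\omega$ in $\Theta^2(\M)$, and then use the gauge freedom in the choice of potential to pass to an element of $\image\bHg$. First, I would observe that $\tau$ inherits the necessary structure: by \lemref{lem:delta_Hg_star}, $\deltag \bHg^*\omega = 0$, so $\deltag\tau = 0$; and by hypothesis $(\PnnD,\PtnD)\tau = 0$ and $\tau\perp\calS\BH^1_\NN(\M)$.

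I would then apply the orthogonal decomposition of \thmref{thm:3.11} to $\tau$. The orthogonality to $\calS\BH^1_\NN$, combined with the subdecomposition \eqref{eq:decompositon_of_symmetric_biharmonic} of the biharmonic module, lets me absorb the $\calS\BH^1$-summand into the $\bHg$- and $(\bFg^*+(\bFg^*(\cdot))^T)$-summands; by \eqref{eq:calL_df} and \propref{prop:2.9}, their sum takes the form of a Lie derivative $\calL_Y\g$. Hence
\[
\tau = \bHg^*\beta + \calL_Y\g, \qquad \beta\in\Theta^2(\M)\cap\ker(\PnnD,\frakT^*).
\]
To show $\calL_Y\g=0$, I would mirror the proof of \thmref{thm:3.1}: by \propref{prop:2.8}, $\bHg\calL_Y\g = 0$; using \lemref{lem:delta_Hg_star} together with $(\PnnD,\PtnD)\bHg^*\beta=(0,0)$ from \cite[Cor.~7.2]{KL21a}, $\deltag(\calL_Y\g) = 0$ and $(\PnnD,\PtnD)(\calL_Y\g)=0$, so $\calL_Y\g\in\calS\BH^1_\NN(\M)$ by \corrref{cor:1_symmetric_biharmonic_fields}. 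But $\calL_Y\g = \tau - \bHg^*\beta$ is orthogonal to $\calS\BH^1_\NN$ (the first summand by hypothesis, the second by \thmref{thm:3.11}), hence vanishes. This gives $\sigma=\bHg^*(\omega+\beta)$, so the candidate $\psi_0:=\omega+\beta$ already satisfies $\bHg^*\psi_0=\sigma$ together with $(\PnnD,\frakT^*)\psi_0=(\PnnD,\frakT^*)\omega$.

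To enforce $\psi\in\image\bHg$, I would decompose $\psi_0\in\Theta^2(\M)$ using the analog of \thmref{thm:3.11} at level $k=2$:
\[
\psi_0 = \bHg p + \bHg^*q + \bFg^*r + (\bFg^*r)^T + s,\qquad p\in\Theta^1(\M),\;s\in\calS\BH^2(\M).
\]
The exactness relations \eqref{eq:exact_relations} imply that $\bHg^*$ annihilates the last four summands, so $\bHg^*\bHg p = \bHg^*\psi_0 = \sigma$. Setting $\psi:=\bHg p$ yields $\psi\in\image\bHg$ with $\bHg^*\psi=\sigma$; the algebraic curvature condition $\frakG\psi=0$ follows from $\frakG p=0$ (by symmetry of $p$) and the commutation $\bHg\frakG=\frakG\bHg$. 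The Sobolev estimate comes from the $W^{s,p}$-closedness of the decompositions and the $W^{s+2,p}$-regularity of the potentials asserted in \thmref{thm:3.11}.

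The main obstacle will be preserving the boundary condition $(\PnnD,\frakT^*)\psi=(\PnnD,\frakT^*)\omega$ through the passage from $\psi_0$ to $\bHg p$. Equivalently, one must show that the four non-$\calS\EE^2$ summands in the level-2 decomposition of $\psi_0$ have vanishing $(\PnnD,\frakT^*)$-boundary data. For the $\bHg^*q$-summand this is the level-2 analog of \cite[Cor.~7.2]{KL21a}; for the $\bFg^*$-summands and the $\calS\BH^2$-component it should follow from their defining boundary conditions combined with the integration-by-parts formulae \eqref{eq:ibp}. Verifying these traces component by component is the technical core, essentially parallel to the boundary analysis of Section~7 of \cite{KL21a}.
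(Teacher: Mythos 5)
Your proposal takes a genuinely different route from the paper, which disposes of this proposition in one line by citing Theorem~7.13 of \cite{KL21a} with $\chi=\sigma$, $\phi=\PnnD\omega$ and $\mu=\frakT^*\omega$ (the hypothesis $\deltag\sigma=0$ entering only through \propref{prop:4.2}, which gives $\sigma\in\ker(\bHg^*,\bFg)$). Your first half is correct and self-contained: showing that $\tau=\sigma-\bHg^*\omega$ decomposes as $\bHg^*\beta+\calL_Y\g$ with $\beta\in\ker(\PnnD,\frakT^*)$, and that $\calL_Y\g$ lies in $\calS\BH^1_\NN(\M)$ (via \corrref{cor:1_symmetric_biharmonic_fields}) while being orthogonal to it, hence vanishes, is a legitimate reprise of the proof of \thmref{thm:3.1} and correctly produces $\psi_0=\omega+\beta$ with $\bHg^*\psi_0=\sigma$ and the prescribed $(\PnnD,\frakT^*)$-traces.

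The second half has two genuine gaps. First, the ``level-2 analog of \thmref{thm:3.11}'' you invoke to decompose $\psi_0\in\Theta^2(\M)$ is not available in this paper in general dimension: the exactness conditions \eqref{eq:exact_relations} are established only for the diagram \eqref{eq:exact_diagram} centered at $\Theta^1(\M)$, the identity $D_\g\psi=-\kappa\,\g\wedge\psi$ is explicitly noted to hold only for $k=m=1$, and the dual diagram centered at $\Theta^2(\M)$ is verified to be exact only for $d=2,3$ (Section~6.2). Second, and more fundamentally, even granting such a decomposition, passing from $\psi_0$ to its $\calS\EE^2$-component $\bHg p$ destroys the boundary data: the remaining summands are characterized by boundary conditions on their \emph{potentials} (e.g.\ $q\in\ker(\PnnD,\frakT^*)$ for the $\bHg^*q$ term), not by vanishing of their own $(\PnnD,\frakT^*)$-traces, so there is no reason that $(\PnnD,\frakT^*)\bHg p=(\PnnD,\frakT^*)\psi_0$. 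You flag this yourself as ``the technical core'' but leave it open; it is precisely the content of the cited Theorem~7.13, which is why the paper does not attempt your two-step route. The way to close the gap without that citation is to abandon the projection step and instead solve the regular elliptic system $\frakB_\NN$ for $p\in\Theta^1(\M)$ directly, with $\Bg p=\sigma$ and the inhomogeneous boundary data $(\PnnD\bHg,\frakT^*\bHg)p=(\PnnD\omega,\frakT^*\omega)$ --- these traces are included in the boundary operator $\frakB_\NN$ precisely so that $\psi=\bHg p$ inherits the prescribed $(\PnnD,\frakT^*)$-data --- and then use your first-half computations together with the orthogonality of $\calS\CC^1(\M)$ to the images of $\bHg$ and $\bFg$ to kill the three unwanted terms of $\Bg p$.
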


The proof is basically an application of Theorem.~7.13 in \cite{KL21a}
with $\chi = \sigma$, $\phi = \PnnD\omega$ and $\mu = \frakT^*\omega$. The condition that $\sigma\in\ker(\bHg^*,\bFg)$ follows from \propref{prop:4.2}.

When $(\M,\g)$ is locally-flat and simply-connected, $\calS\BH^1_\NN(\M)=\{0\}$, hence the first condition holds trivially. 
Also, if $\PnnD\sigma=0$ and $\PtnD\sigma=0$, then both conditions hold with the choice of $\omega=0$ provided that $\sigma\bot\calS\BH^1_{\NN}(\M)$. 

We further specialize this representation statement in the physically-relevant case of $d=3$. Then, $\starG\starG^V:\Theta^1(\M)\rightarrow\Theta^2(\M)$ is a $W^{s,p}$-isometry. Using \thmref{thm:5.4} and \propref{prop:stress_potential}, we  provide a sharper choice of gauge, which also facilitates a uniqueness clause.

\begin{theorem}
\label{thm:stress_potential_2_three_dimensions}
Let $d=3$ and $\g$ have constant sectional curvature, and let $\sigma\in\Theta^1(\M)$ satisfy $\sigma\in\image\bHg^*$ (which by \propref{prop:stress_potential} is the case if $\deltag\sigma=0$ and $\sigma$ satisfies the integrability conditions \eqref{eq:integrability_conditions_stress_potential}). Then there exists a $\psi\in\Theta^2(\M)$ solving the set of equations
\beq
\begin{gathered}
\dg\psi=0 
\qquad 
\bHg^*\psi=\sigma
\qquad
(\PttD,\PntD)\psi=0.
\end{gathered}
\label{eq:three_dimensional_potential_2}
\eeq
Such a $\psi$ is unique up to an arbitrary element $\theta\in\calS\BH^2_{\TT}(\M)$. If $\sigma$ is a Sobolev section, then the solution inherits this regularity and for all $1\leq q\leq p$,
\[
\Norm{\psi_\theta}_{W^{s+2,p}(\M)}\lesssim \Norm{\sigma}_{W^{s,p}(\M)}+\Norm{\theta}_{L^{q}(\M)}.
\]
\end{theorem}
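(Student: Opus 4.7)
The plan is to exploit the fact that when $d=3$, $\starG\starG^V$ is an involutive isometry between $\Theta^1(\M)$ and $\Theta^2(\M)$, and to reduce the problem to the well-posedness result for the incompatible elasticity equations (\thmref{thm:5.4}).

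First, I parametrize the sought potential by $\psi = \starG\starG^V \sigma'$ for an unknown $\sigma' \in \Theta^1(\M)$, and translate \eqref{eq:three_dimensional_potential_2} into an equivalent problem for $\sigma'$. Using the duality $\bHg^* = \starG\starG^V \bHg \starG\starG^V$ on $\Theta^2$ (for $d=3$, $k=m=2$, the sign $(-1)^{dk+dm}$ equals $+1$) together with $(\starG\starG^V)^2 = \id$ on both $\Theta^1(\M)$ and $\Theta^2(\M)$, the equation $\bHg^*\psi = \sigma$ becomes $\bHg\sigma' = \calR$, where $\calR := \starG\starG^V\sigma \in \Theta^2(\M)$. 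Because $\dg$ commutes with $\starG^V$ and conjugation by $\starG$ turns $\dg$ into a signed $\deltag$, the gauge $\dg\psi = 0$ is equivalent to $\deltag\sigma' = 0$. Composing the boundary duality relations of Section~2 shows that $\PttD\psi$ corresponds to $\PnnD\sigma'$ (both via $\star_\gD\star_\gD^V$) and $\PntD\psi$ corresponds to $\PtnD\sigma'$, so $(\PttD,\PntD)\psi = 0$ becomes $(\PnnD,\PtnD)\sigma' = 0$. The transformed system is precisely the incompatible-elasticity boundary-value problem \eqref{eq:Euler_lagrange_equations} with source $\calR$ and vanishing traction.

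Second, I verify the solvability hypotheses of \thmref{thm:5.4}. The hypothesis $\sigma \in \image\bHg^*$, combined with $(\starG\starG^V)^2 = \id$, gives $\calR \in \image\bHg$: writing $\sigma = \bHg^*\eta$ for some $\eta \in \Theta^2(\M)$, we have $\calR = \starG\starG^V \bHg^*\eta = \bHg(\starG\starG^V\eta)$. The boundary integrability condition \eqref{eq:integrability_conditions} is trivial since the traction data vanish. \thmref{thm:5.4} then delivers a solution $\sigma' \in \Theta^1(\M)$, unique up to an element of $\calS\BH^1_\NN(\M)$, together with the Sobolev estimate.

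Finally, I set $\psi := \starG\starG^V\sigma'$; this lies in $\Theta^2(\M)$ and solves \eqref{eq:three_dimensional_potential_2} by construction. Uniqueness up to $\calS\BH^2_\TT(\M)$ follows from the isometry $\starG\starG^V:\calS\BH^1_\NN(\M) \to \calS\BH^2_\TT(\M)$ recalled in Section~2, and the Sobolev estimate transfers because $\starG\starG^V$ is a $W^{s,p}$-isometry (so $\|\calR\|_{W^{s,p}} = \|\sigma\|_{W^{s,p}}$ and $\|\theta\|_{L^q} = \|\starG\starG^V\theta\|_{L^q}$). I expect the main obstacle to be the careful bookkeeping of signs in extending the duality $\deltag = \pm\starG\dg\starG$ from scalar to double forms and in composing the four boundary-projection duality identities from Section~2; these are routine within the framework of \cite{KL21a} but tedious.
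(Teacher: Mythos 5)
Your proposal is correct and follows essentially the same route as the paper: the paper's proof likewise sets $\Sigma=\starG\starG^V\sigma$, solves the boundary-value problem \eqref{eq:Euler_lagrange_equations} for a $\chi\in\Theta^1(\M)$ with vanishing traction via \thmref{thm:5.4}, and recovers $\psi=\starG\starG^V\chi$, with uniqueness and the Sobolev estimate transferred through the $W^{s,p}$-isometry $\starG\starG^V:\calS\BH^1_\NN(\M)\to\calS\BH^2_\TT(\M)$. Your additional bookkeeping (the sign $(-1)^{dk+dm}=+1$, the explicit check that \eqref{eq:integrability_conditions} is vacuous for zero traction, and the correspondence of boundary projections under the duality) is exactly the content the paper leaves implicit under the phrase ``by duality.''
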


Thus, the stress potential $\psi$ can be chosen such to satisfy both algebraic and differential Bianchi identities, along with vanishing boundary conditions. In particular, the boundary conditions for the potential can be chosen independently of the boundary data of $\sigma$.

\begin{proof}
Set $\Sigma=\starG\starG^V\sigma\in\Theta^2(\M)$, and consider the system for $\chi\in\Theta^1(\M)$
\[
\deltag\chi = 0
\qquad  
\bHg\chi=\Sigma
\qquad
(\PtnD,\PnnD)\chi = (0,0).
\]
Since by duality $\sigma\in\image{\bHg^*}$ implies $\Sigma\in\image{\bHg}$, the conditions of \thmref{thm:5.4} are satisfied, hence this system is solvable; the solution $\chi$ is unique up to an arbitrary element in $\calS\BH^1_{\NN}(\M)$. Setting $\psi\in\Theta^2(\M)$,
\[
\psi=\starG\starG^V\chi,
\] 
we obtain by duality that $\psi$ satisfies \eqref{eq:three_dimensional_potential_2}. The uniqueness clause follows from the fact that in $d=3$, $\starG\starG^V:\calS\BH^1_{\NN}(\M)\rightarrow \calS\BH^2_{\TT}(\M)$ is an isometry. Finally, the regularity clause follows from the fact that $\starG\starG^V$ is a $W^{s,p}$-isometry, and the estimate \eqref{eq:sobolev_estimate} holds accordingly. 
\end{proof}


\subsection{Boundary-value problem for the stress potential}

\propref{prop:stress_potential} holds in arbitrary dimension, but the main applications of stress potentials are in dimensions $d=2,3$.
In these cases, one obtains another diagram satisfying the exactness conditions \eqref{eq:exact_relations}:

\[
\begin{tikzcd}
&& {\Theta^3(\M)} \\
&& {} \\
{\Omega^{1,3}(\M)} && {\Theta^2(\M)} && {} \\
&& {} \\
&& {\Theta^1(\M)}
\arrow["{\bHg}"', curve={height=12pt}, from=5-3, to=3-3]
\arrow["{\bHg}"', curve={height=12pt}, from=3-3, to=1-3]
\arrow["{\bHg^*}", curve={height=-12pt}, tail reversed, no head, from=3-3, to=1-3]
\arrow["{\bHg^*}"', curve={height=12pt}, from=3-3, to=5-3]
\arrow["{\tfrac12(\bFg + (\bFg(\cdot))^T)}", curve={height=-12pt}, from=3-1, to=3-3]
\arrow["{\bFg^*}"', curve={height=12pt}, tail reversed, no head, from=3-1, to=3-3]
\end{tikzcd}
\]

In dimension 2, this diagram is trivial since $\Theta^3(\M), \Omega^{3,1}(\M)$ and $\Omega^{1,3}(\M)$ are all $\BRK{0}$. 
In dimension 3, this diagram is a ``reflection" of Diagram \eqref{eq:exact_diagram} via the duality $\psi\mapsto \starG\starG^V\psi$; for example,
\[
\bFg^*:\Theta^2(\M) \to \Omega^{1,3}(\M)
\]
is defined by
\[
\bFg^*|_{\Theta^2(\M)}\psi = \starG\starG^V \bFg|_{\Theta^1(\M)} \starG\starG^V \psi.
\]
For $\psi\in\Theta^1(\M)$, the definition of $\bHg\psi$ coincides with that of Diagram \eqref{eq:exact_diagram}.
Finally, $\Theta^3(\M)$ can be identified with the space $\Omega^0(\M)\simeq C^\infty(\M)$.

In either case, the condition $\psi\in\image{\bHg}$ in \eqref{eq:stress_potential} implies 
\[
\psi\in\ker{(\bFg,\bHg)}.
\]

Let $\sigma$ be a solution of the boundary-value problem \eqref{eq:Euler_lagrange_equations}, and suppose
that the conditions of \propref{prop:stress_potential} hold, namely, there exists an $\omega\in\Theta^2(\M)$ satisfying the required conditions.
Inserting the stress potential $\psi$ into \eqref{eq:Euler_lagrange_equations} one obtains the boundary-value problem
\[
\Bg\psi=\calR 
\qquad 
\psi\in\ker{(\bFg,\bHg)}
\qquad
(\PnnD,\frakT^*)\psi=(\PnnD,\frakT^*)\omega.
\]

For $d=2$ the condition that $\psi\in\ker{(\bFg,\bHg)}$ holds trivially. 
By duality, the boundary-value problem can be reformulated as an equation for a scalar function,
\[
\chi=\starG\starG^V\psi\in C^{\infty}(\M),
\]
namely,
\[
\begin{gathered}
\starG\starG^{V}\bHg^{*}\bHg\psi = \Delta^2_\g\chi  + 2\kappa\Delta_\g \chi+\kappa^2\chi = \starG\starG^V\calR \\
(\PttD,\frakT)\chi=(\chi|_{\dM},\partial_\frakn\chi|_{\dM})=(\star_{\gD}\star_{\gD}^V\omega|_{\dM},\partial_r\star_{\gD}\star_{\gD}^V\omega|_{\dM}),
\end{gathered}
\]
where $\Delta_\g$ is the Laplace-Beltrami operator. In incompatible elasticity, where $(\M,\g)$ is a manifold of constant sectional curvature $\kappa$, \eqref{eq:summary6} with $\calC=-2$ yields $\calR = -2\Rm_\g$, in which case $ \starG\starG^V\calR = -2\kappa$.
This boundary-value problem is a generalization of the bilaplacian equation for the Airy stress functions in incompatible elasticity \cite{MSK14,MSK15}. It is a boundary-value problem for a fourth-order strongly elliptic operator supplemented with Dirichlet boundary conditions, hence has a unique solution.

For $d=3$, the same duality transformation yields a potential $\chi\in\Theta^1(\M)$ satisfying the boundary-value problem,
\[
\begin{gathered}
\Bg\chi= \starG^V\starG\calR \qquad \chi\in\ker{(\bHg^*,\bFg)} \\
(\PttD,\frakT)\chi = (\PttD\star_{\gD}\star_{\gD}^V\omega,\frakT\star_{\gD}\star_{\gD}^V\omega).
\end{gathered}
\]
This system is solvable under the conditions ensuring the existence of a stress potential, however, the solution is generally not unique.

A different choice of gauge for the stress potential under the same assumptions, is the one produced in \thmref{thm:stress_potential_2_three_dimensions}. 
First, we note that for $d=3$,  the dual version of \lemref{lem:boundary_data_delta} reads:

\begin{lemma}
\label{lem:6.3}
Let $d=3$ and let $\psi\in\Theta^2(\M)$, $\rho\in \Omega^2(\dM)$ and $\tau\in\Omega^{1,2}(\dM)$. Suppose that
\[
(\PttD,\PntD)\psi = (\rho,\tau).
\]
Then,
\[
(\frakT,\frakF^*)\psi = (-\dgD \tau,-\deltagD \rho + \tfrac12 \gD\wedge \trace_{\frakh_{0}}\tau)
\]
if and only if
\[
\dg\psi|_{\dM}=0.
\]
\end{lemma}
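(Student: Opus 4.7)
The natural strategy is to dualize via the isometry $\starG\starG^V$, which in dimension $d=3$ restricts to a bijection $\Theta^2(\M)\to\Theta^1(\M)$ with $(\starG\starG^V)^2=\id$, and thereby reduce the claim to \lemref{lem:boundary_data_delta}. Specifically, set $\sigma := \starG\starG^V\psi \in\Theta^1(\M)$. The program is then to translate the hypotheses, the differential condition, and the conclusion across this duality, and check that the resulting equivalence is precisely the one asserted in \lemref{lem:boundary_data_delta} applied to $\sigma$.

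First I would use the boundary/Hodge duality identities listed just after \eqref{eq:intro_zero_order_operators} (twice, once for $\starG$ and once for $\starG^V$) to deduce
\[
\PnnD\sigma = \pm\,\star_\gD\star_\gD^V\,\PttD\psi = \pm\,\star_\gD\star_\gD^V\rho,
\qquad
\PtnD\sigma = \pm\,\star_\gD\star_\gD^V\,\PntD\psi = \pm\,\star_\gD\star_\gD^V\tau,
\]
with explicit signs depending on the degrees and $d=3$. Since $\deltag$ and $\dg$ are conjugate via $\starG\starG^V$, one also gets $\deltag\sigma = \pm\,\starG\starG^V\dg\psi$, so $\deltag\sigma|_{\dM}=0$ if and only if $\dg\psi|_{\dM}=0$. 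This reduces the equivalence in the statement to the same equivalence for $\sigma$.

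Next I would apply \lemref{lem:boundary_data_delta} to $\sigma$, obtaining $(\frakT^*\sigma,\frakF\sigma)$ in terms of the transformed boundary data $(\rho',\tau')=(\pm\star_\gD\star_\gD^V\rho,\pm\star_\gD\star_\gD^V\tau)$. To transfer this back into a statement about $(\frakT\psi,\frakF^*\psi)$, I would use the duality identities
\[
\frakT^* = (-1)^{dk+dm+k+m}\,\star_\gD\star_\gD^V\,\frakT\,\starG\starG^V,
\qquad
\frakF = (-1)^{dm+m}\,\star_\gD^V\,\frakT\,\starG^V,
\]
together with the fact that $\starG\starG^V$ is involutive on $\Theta^1(\M)$ and $\Theta^2(\M)$ when $d=3$, to rewrite
\[
\frakT^*\sigma = \pm\,\star_\gD\star_\gD^V\,\frakT\psi,
\qquad
\frakF\sigma = \pm\,\star_\gD\star_\gD^V\,\frakF^*\psi.
\]
After applying $\star_\gD\star_\gD^V$ to both sides of the conclusion of \lemref{lem:boundary_data_delta}, one is reduced to verifying the two 2-dimensional Hodge-dual identities on $\dM$, namely $\pm\,\star_\gD\star_\gD^V\,\delta\,\star_\gD\star_\gD^V = \dgD$ acting on $\tau$, and similarly $\pm\,\star_\gD\star_\gD^V\,d\,\star_\gD\star_\gD^V = \deltagD$ acting on $\rho$, while the tensorial term transforms via the adjointness of $\trace_\gD$ and $\gD\wedge$ (and analogously, of $\trace_{\frakh_0}$ and $\frakh_0\wedge$), converting $\trace_\gD(\frakh_0\wedge\tau')$ into $\gD\wedge\trace_{\frakh_0}\tau$.

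The main obstacle is bookkeeping: tracking the many $\pm1$ signs produced by $\starG^2$ and $(\starG^V)^2$ on forms of different bidegrees, verifying that the reductions of $\delta, d$ on $\dM$ under the double star give precisely $\dgD, \deltagD$, and showing rigorously that $\star_\gD\star_\gD^V(\trace_\gD(\frakh_0\wedge\cdot))=\gD\wedge\trace_{\frakh_0}(\cdot)$. An alternative direct approach, avoiding duality, would be to mimic the proof of \lemref{lem:boundary_data_delta} verbatim by expanding $\frakT\psi$ and $\frakF^*\psi$ from their definitions \eqref{eq:1st_order_proj} and invoking the commutation formulas between $\PttD,\PntD$ and $\dg,\dgV$ from \cite[Lemmas 4.9--4.10]{KL21a}; there the algebraic curvature terms $\calS_0,\calS_0^*$ that appeared in \lemref{lem:boundary_data_delta} would be replaced by their transposes, producing the $\gD\wedge\trace_{\frakh_0}\tau$ term directly.
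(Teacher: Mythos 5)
Your proposal is correct and matches the paper's intent: the paper states \lemref{lem:6.3} without proof, presenting it simply as ``the dual version'' of \lemref{lem:boundary_data_delta}, i.e., as obtained by conjugating that lemma with the isometry $\starG\starG^V:\Theta^2(\M)\to\Theta^1(\M)$ (valid since $d=3$) and using the listed duality identities for the boundary projections, the first-order boundary operators, and $d$, $\delta$ --- exactly the reduction you describe. The sign bookkeeping and the conversion of $\trace_{\gD}(\frakh_0\wedge\cdot)$ into $\gD\wedge\trace_{\frakh_0}(\cdot)$ under the double Hodge star are the only details left implicit, and they go through as you indicate.
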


Inserting $\psi$ into \eqref{eq:Euler_lagrange_equations}, we obtain the system
\[
\Bg\psi=\calR 
\qquad 
\dg\psi=0
\qquad
(\PttD,\frakT,\PntD,\frakF^*)\psi=0,
\]
where  $(\frakT,\frakF^*)\psi=0$ by \lemref{lem:6.3}.
This system is solvable under the conditions ensuring the existence of a stress potential; the solution is again not unique.



\providecommand{\bysame}{\leavevmode\hbox to3em{\hrulefill}\thinspace}
\providecommand{\MR}{\relax\ifhmode\unskip\space\fi MR }
\providecommand{\MRhref}[2]{%
  \href{http://www.ams.org/mathscinet-getitem?mr=#1}{#2}
}
\providecommand{\href}[2]{#2}

\end{document}